\documentclass[a4paper,reqno]{amsart}
\usepackage{geometry}
\usepackage{mathrsfs}
\usepackage{amsmath,amsthm,amsfonts,amssymb}
\usepackage{enumerate}
\usepackage{needspace}
\usepackage{xcolor}
\usepackage[colorlinks=true,linkcolor=blue,citecolor=blue]{hyperref}
\numberwithin{equation}{section}

\theoremstyle{plain}
\newtheorem{theorem}{Theorem}[section]
\newtheorem{proposition}[theorem]{Proposition}
\newtheorem{lemma}[theorem]{Lemma}
\newtheorem{corollary}[theorem]{Corollary}
\theoremstyle{definition}
\newtheorem{definition}[theorem]{Definition}
\newtheorem{example}[theorem]{Example}
\newtheorem{remark}[theorem]{Remark}

\newcommand{\C}{\mathcal C}
\newcommand{\T}{\mathcal T}
\newcommand{\X}{\mathcal X}
\newcommand{\A}{\mathcal A}
\newcommand{\E}{\mathbb E}
\newcommand{\Hom}{\operatorname{Hom}}
\newcommand{\add}{\operatorname{add}}
\newcommand{\id}{\operatorname{id}}
\newcommand{\op}{\mathrm{op}}
\newcommand{\zero}{\mathbf 0}
\newcommand{\SigX}{\Sigma_{\X}}
\newcommand{\OmX}{\Omega_{\X}}
\newcommand{\QX}{\C/[\X]}
\newcommand{\piX}{\pi_{\X}}

\title[Higher orthogonality and truncation in canonical pretriangulated quotients]{Higher orthogonality and truncation in canonical pretriangulated quotients}

\hypersetup{
  pdftitle={Higher orthogonality and truncation in canonical pretriangulated quotients},
  pdfauthor={Yixia Zhang and Panyue Zhou},
  pdfkeywords={extriangulated category, ideal quotient, pretriangulated category, higher maximal orthogonality, cluster tilting subcategory}
}

\author{Yixia Zhang and Panyue Zhou}

\address{School of Mathematics and Statistics, Changsha University of Science and Technology, 410114 Changsha, Hunan, P. R. China}
\email{yxzhangmath@163.com}

\address{School of Mathematics and Statistics, Changsha University of Science and Technology, 410114 Changsha, Hunan, P. R. China}
\email{panyuezhou@163.com}
\date{}

\makeatletter
\@namedef{subjclassname@2020}{\textup{2020} Mathematics Subject Classification}
\makeatother
\subjclass[2020]{18G80; 18E05; 18E10; 18G15}
\keywords{extriangulated category; ideal quotient; pretriangulated category; higher maximal orthogonality; cluster tilting subcategory}

\begin{document}

\begin{abstract}
Canonical ideal quotients of extriangulated categories admit natural
one-sided triangulated or pretriangulated structures. A fundamental question
is whether these quotient structures still retain enough information to recover
higher orthogonality properties of the subcategory being factored out. We show
that, for a strongly functorially finite $n$-rigid subcategory, such information
is encoded by the nilpotency of the canonical suspension and, equivalently, of
the canonical loop. More precisely, this nilpotency characterizes two-sided
maximal $n$-orthogonality. We further establish an objectwise recognition
criterion that combines one-sided higher orthogonality with the vanishing of
the $n$th suspension or loop. In the triangulated setting, our results give a
converse to the known truncation construction and show that the quotient is
$n$-truncated if and only if the subcategory is $(n+1)$-cluster tilting.
Moreover, in the nonsplit case, the nilpotency index is exactly $n$. These
results turn truncation from a consequence of higher orthogonality into a sharp
criterion for recognizing it.
\end{abstract}
\maketitle

\section{Introduction}

Ideal quotients are a basic bridge between approximation theory and
homological structures. Let $(\C,\E,\mathfrak s)$ be an extriangulated
category in the sense of Nakaoka--Palu \cite{NP}, and let $\X$ be an additive
subcategory of $\C$. When $\X$ is strongly functorially finite, Zhou--Zhu
\cite[Lemma 3.21]{ZZtri} equip the ideal quotient $\C/[\X]$ with a canonical
pretriangulated structure. Here pretriangulated is used in the sense of
Beligiannis--Reiten \cite[Chapter II]{BR}. We denote its suspension and loop functors by $\SigX$ and $\OmX$, respectively.
Two established results describe the two extreme behaviours of this quotient.
At one endpoint, cluster tilting subcategories yield abelian quotients under
the standard additional hypotheses. This follows from results of Zhou--Zhu
\cite[Corollary 3.5]{ZZct} and Liu--Zhou \cite{LZabel}, extending the
triangulated theorem of Koenig--Zhu \cite[Theorem 3.3]{KZ}. At the stable
endpoint, Zhou--Zhu \cite[Theorem 4.3]{ZZtri} show, under
Auslander--Reiten duality hypotheses, that the quotient is triangulated
precisely when the ambient pair is an $\X$-mutation pair. In a triangulated
category, this specializes to the criterion $\tau\X=\X$ of J\o rgensen
\cite[Theorem 3.3]{Jor}. Thus the canonical suspension is zero at the cluster
tilting endpoint and invertible at the stable endpoint.
\vspace{1mm}

Finite nilpotency lies naturally between these two extremes. A
pretriangulated category is $n$-truncated when its $n$th suspension, or
equivalently its $n$th loop, is zero, following
Mochizuki--Nakaoka--Ogawa \cite[Definition 1.2.3]{MNO}. These authors
\cite[Theorem 2.4.15]{MNO} prove that hearts of $n$-cotorsion pairs are
abelian and $n$-truncated. Their Example 2.1.5(2) identifies the higher cluster
tilting specialization with the ideal quotient by the cluster tilting
subcategory. This forward construction shows that higher orthogonality
produces a truncated quotient. It leaves open three converse questions: whether truncation of the canonical
suspension can recognize the higher orthogonality of $\X$, whether the
recognition can be made objectwise, and whether the truncation exponent is
optimal.
\vspace{1mm}

We answer all three questions. We use the positive higher extensions
$\E^i$ constructed by Gorsky--Nakaoka--Palu
\cite[Theorem 3.6]{GNP}. A subcategory is $n$-rigid when
$\E^i(\X,\X)=0$ for $1\leq i\leq n$. We use the standard orthogonality
classes $\X^{\perp_i}$ and ${}^{\perp_i}\X$. Let $\piX\colon\C\to\C/[\X]$ denote the quotient functor. If the corresponding canonical suspension or loop exists, set
\[
 \operatorname{Tr}^{+}_{n}(\X)
 =\{A\in\C\mid \SigX^n(\piX A)\cong0\},
\hspace{2.5mm}
 \operatorname{Tr}^{-}_{n}(\X)
 =\{A\in\C\mid \OmX^n(\piX A)\cong0\}.
\]
These are local truncation loci. They record vanishing at one object rather
than on the whole quotient.

Throughout this article, we fix a
commutative ring $R$ with $1$ and a positive integer $n\geq 1$.

\begin{theorem}\label{thm:intro-local}
Let $(\C,\E,\mathfrak s)$ be a small $R$-linear extriangulated category and $\X=\add\X$ be an $n$-rigid subcategory of $\C$.
\begin{enumerate}[{\rm(1)}]
\item If $\X$ is strongly covariantly finite, then
\[
 \X
 =
 \operatorname{Tr}^{+}_{n}(\X)
 \cap
 \bigcap_{i=1}^{n}\X^{\perp_i}.
\]
\item If $\X$ is strongly contravariantly finite, then
\[
 \X
 =
 \operatorname{Tr}^{-}_{n}(\X)
 \cap
 \bigcap_{i=1}^{n}{}^{\perp_i}\X.
\]
\end{enumerate}
\end{theorem}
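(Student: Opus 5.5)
We prove (1); part (2) is the dual statement, obtained by passing to $\C^{\op}$, which exchanges strong covariant and contravariant finiteness, $\X^{\perp_i}$ and ${}^{\perp_i}\X$, and $\SigX$ and $\OmX$. The inclusion $\X\subseteq\operatorname{Tr}^{+}_{n}(\X)\cap\bigcap_{i=1}^{n}\X^{\perp_i}$ is immediate. Since $\X$ is $n$-rigid, every $X\in\X$ lies in $\X^{\perp_i}$ for $1\le i\le n$. Moreover $\piX X\cong 0$ in $\QX$, so $\SigX^n(\piX X)\cong 0$.

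For the reverse inclusion, let $A$ lie in the intersection. Recall the construction of $\SigX$ in \cite{ZZtri}. For each object $A$ one chooses a left $\X$-approximation that is an $\mathfrak s$-inflation, giving an $\E$-triangle
\[
A\to X^0\to A^1\dashrightarrow,
\]
and sets $\SigX(\piX A)=\piX A^1$. Iterating produces $\E$-triangles $A^{k}\to X^{k}\to A^{k+1}\dashrightarrow$ with $A^0=A$, $X^k\in\X$ and $\SigX^k(\piX A)=\piX A^k$.

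\textbf{Step 1: dimension shift.} Apply $\E(\X,-)$ and the long exact sequences of higher extensions \cite[Theorem 3.6]{GNP}. Because $\E^j(\X,X^k)=0$ for $1\le j\le n$, we get $\E^{j}(\X,A^{k+1})\cong\E^{j+1}(\X,A^k)$ for $1\le j\le n-1$. Combined with $A\in\X^{\perp_i}$ for $1\le i\le n$, this gives $A^k\in\bigcap_{i=1}^{n-k}\X^{\perp_i}$. In particular $\E(\X,A^k)=0$ for $0\le k\le n-1$.

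\textbf{Step 2: descending induction.} The hypothesis says $\piX A^n\cong0$, that is, $A^n\in\X$ up to isomorphism in $\C$ (since $\X=\add\X$, an object zero in $\QX$ is a summand of an object of $\X$). We show that $A^{k+1}\in\X$ implies $A^k\in\X$ for $0\le k\le n-1$.

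Consider $A^k\to X^k\to A^{k+1}\overset{\delta}{\dashrightarrow}$ with $A^{k+1}\in\X$. The class $\delta$ lies in $\E(A^{k+1},A^k)$, and this group vanishes by Step 1 because $A^{k+1}\in\X$. Hence the $\E$-triangle splits, $A^k$ is a direct summand of $X^k$, and so $A^k\in\add\X=\X$.

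Induction down from $k=n-1$ to $k=0$ yields $A=A^0\in\X$. The condition $A\in\X^{\perp_n}$ is used through the shift to get $\E(\X,A^{n-1})=0$, which is exactly what the last splitting needs. This shows the exponent $n$ matches the $n$ orthogonality conditions.

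\textbf{Main obstacle.} The delicate point is the bookkeeping in Step 1. We must check that the higher extensions of \cite{GNP} give a long exact sequence in the second variable that is compatible with the $\E$-triangles defining $\SigX$. We must also justify identifying $\SigX^k(\piX A)$ with $\piX A^k$ up to isomorphism in $\QX$, because the approximation triangles are chosen only up to $\X$-summands. To handle the second issue, we would note that changing approximations alters $A^{k}$ only by summands in $\X$. Such summands do not affect the orthogonality conditions, since $\X$ is $n$-rigid, nor membership in $\X$. So the induction is independent of these choices.
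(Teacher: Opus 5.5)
Your proposal is correct and follows essentially the same route as the paper. You dimension-shift along the left approximation tower (the paper's Lemma~\ref{lem:dimension-shifting}(3)) to get $\E(\X,A_j)=0$ for $0\le j\le n-1$, then run a descending induction from $A_n\in\X$ that splits each approximation $\E$-triangle. The only differences are cosmetic: the paper avoids your concern about choices by fixing the approximation triangles once and for all, so that $\SigX^j(\piX A)\cong\piX(A_j)$ holds by construction, and it proves part (2) directly with right approximation triangles and the contravariant long exact sequences rather than by passing to $\C^{\op}$.
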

In Theorems~\ref{thm:local-reconstruction} and~\ref{thm:local-reconstruction-dual}, we
give the precise statements and proofs. The key observation is that the
$n$th suspension or loop needs to vanish only at the object under
consideration, rather than on the entire quotient. In Example~\ref{ex:local-not-global},
we show that this distinction is essential.

Passing from local to global vanishing gives the recognition theorem.

\begin{theorem}\label{thm:intro-main}
Let $(\C,\E,\mathfrak s)$ be a small $R$-linear extriangulated category and
 $\X=\add\X$ be a strongly functorially finite $n$-rigid
subcategory of $\C$. Then the following conditions are equivalent:
\begin{enumerate}[{\rm(1)}]
\item
\[
\X=
\bigcap_{i=1}^{n}{}^{\perp_i}\X
=
\bigcap_{i=1}^{n}\X^{\perp_i}.
\]
\item $\SigX^{n}\cong\zero$ as endofunctors of $\C/[\X]$.
\item $\OmX^{n}\cong\zero$ as endofunctors of $\C/[\X]$.
\end{enumerate}
\end{theorem}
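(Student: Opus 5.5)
The plan is to deduce Theorem~\ref{thm:intro-main} from the local statement Theorem~\ref{thm:intro-local}, together with an explicit description of $\SigX$ and $\OmX$ through approximation conflations. Recall the construction: for $A\in\C$, a left $\X$-approximation that is an inflation gives a conflation $A\to X^A\to \Sigma' A\dashrightarrow$, and $\SigX(\piX A)=\piX(\Sigma' A)$. Dually, a right $\X$-approximation that is a deflation gives $\Omega' A\to X_A\to A\dashrightarrow$ with $\OmX(\piX A)=\piX(\Omega' A)$. Since $\add\X=\X$, an object $A$ satisfies $\piX A\cong 0$ exactly when $A\in\X$.

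\textbf{(1)$\Rightarrow$(2).} Take $A\in\C$ and form iterated cosyzygies $A=A_0\to X_0\to A_1\to X_1\to\cdots\to A_n$, so that $\SigX^n(\piX A)\cong\piX A_n$. The goal is $A_n\in\X$, and by (1) it suffices to show $A_n\in{}^{\perp_i}\X$ for $1\le i\le n$. I would use dimension shifting with the long exact sequences for the higher extensions $\E^i$ of Gorsky--Nakaoka--Palu. Since $\E^i(X_k,\X)=0$ for $1\le i\le n$ by $n$-rigidity, we get $\E^{i}(A_{k+1},\X)\cong\E^{i-1}(A_k,\X)$ for $2\le i\le n$. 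Moreover $\E^1(A_{k+1},\X)=0$, because $X_k\to\X$ is surjective on $\Hom(-,\X)$ (it is a left approximation) and $\E^1(X_k,\X)=0$. Iterating, $\E^i(A_n,\X)$ reduces to $\E^1(A_{n-i+1},\X)=0$. Hence $A_n\in\bigcap_{i=1}^n{}^{\perp_i}\X=\X$, and so $\SigX^n$ vanishes on objects. A functor that vanishes on every object is isomorphic to $\zero$. The implication (1)$\Rightarrow$(3) is dual, using right approximations and $\bigcap\X^{\perp_i}=\X$.

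\textbf{(2)$\Rightarrow$(1).} If $\SigX^n\cong\zero$, then $\operatorname{Tr}^+_n(\X)=\C$, and Theorem~\ref{thm:intro-local}(1) gives $\X=\bigcap_{i=1}^n\X^{\perp_i}$. For the other equality, take $A\in\bigcap_{i=1}^n{}^{\perp_i}\X$; I will show $A\in\X$. Consider the right approximation conflation $\Omega' A\to X_A\to A\dashrightarrow$. My plan is to prove that $\OmX^n(\piX A)=0$ follows from $\SigX^n\cong\zero$ via the adjunction-type relation between $\SigX$ and $\OmX$ on the quotient, namely $\Hom_{\C/[\X]}(\SigX B,C)\cong\Hom_{\C/[\X]}(B,\OmX C)$ in the pretriangulated structure of Beligiannis--Reiten. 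Iterating it, we get $\Hom(B,\OmX^nC)\cong\Hom(\SigX^nB,C)=0$ for every $B$. Taking $B=\OmX^nC$ gives $\OmX^n\cong\zero$, which is condition (3). Then Theorem~\ref{thm:intro-local}(2) yields $\X=\bigcap{}^{\perp_i}\X$, and the implication (3)$\Rightarrow$(1) follows symmetrically. This also shows directly that (2)$\Leftrightarrow$(3).

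\textbf{Main obstacle.} The step I expect to be hardest is the adjunction $\SigX\dashv\OmX$ in the quotient. It should come from the Zhou--Zhu construction by lifting morphisms through the approximation conflations, using the facts that $\E(\Sigma' B,\X)$-type obstructions factor through $\X$ and that $\Hom_{\C/[\X]}$ kills maps through $\X$. If that is not readily available, the fallback is a direct dimension-shift argument for (2)$\Rightarrow$(3). In that argument, $\OmX^n(\piX A)=\piX A_{-n}$, and one shows $\E^i(\X,A_{-n})=0$ for $1\le i\le n$ exactly as in the first step. One then applies (2) to the object $A_{-n}$: since $\SigX^n\SigX^{-}$-composites return $A_{-n}$ up to summands in $\X$ once $A_{-n}$ is $\X$-orthogonal, we get $\piX A_{-n}\cong\SigX^n\OmX^n$ of it, which is $0$.
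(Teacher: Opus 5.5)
Your proposal is correct and is essentially the paper's proof: (1)$\Rightarrow$(2) is the dimension-shifting argument of Proposition~\ref{prop:left-to-sigma}, and (2)$\Rightarrow$(1) combines the two local reconstruction theorems with the adjunction $\SigX\dashv\OmX$. The paper does not prove that adjunction but takes it from the Zhou--Zhu pretriangulated structure (Theorem~\ref{thm:known-pretri}), so your ``main obstacle'' is a citation rather than a gap, and although your fallback paragraph is garbled as written, it is unnecessary: once (2) gives $\X=\bigcap_{i=1}^n\X^{\perp_i}$, the dual dimension shift puts every $n$th right-approximation syzygy in $\bigcap_{i=1}^n\X^{\perp_i}=\X$, so $\OmX^n\cong\zero$ follows directly from Proposition~\ref{prop:dual}(1) without any adjunction.
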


In Theorem~\ref{thm:recognition}, we give the full statement and proof, while
Theorem~\ref{thm:one-sided} isolates the one-sided mechanism. Each
approximation direction therefore has a distinct role before the two are
combined.

When enough projective and injective objects exist,
Gorsky--Nakaoka--Palu \cite[Corollary 3.23]{GNP} identify the positive
higher extensions with the usual higher extensions. The orthogonality
condition in Theorem~\ref{thm:intro-main} is then the standard $(n+1)$
cluster tilting condition of Herschend--Liu--Nakaoka
\cite[Definition 3.21]{HLN2}. In a triangulated category the result takes a
particularly transparent form.

\begin{corollary}\label{cor:intro-triangulated}
Let $\T$ be a triangulated category and $\X=\add\X$ be a
functorially finite $n$-rigid subcategory of $\T$. Then the canonical pretriangulated
quotient $\T/[\X]$ is $n$-truncated if and only if $\X$ is $(n+1)$ cluster
tilting.
\end{corollary}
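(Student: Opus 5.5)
The plan is to deduce Corollary~\ref{cor:intro-triangulated} from Theorem~\ref{thm:intro-main} by specializing the extriangulated structure to the triangulated one. We regard $\T$ as an extriangulated category with $\E(C,A)=\T(C,A[1])$ and with $\mathfrak s$ given by the distinguished triangles. Every object is projective and injective with respect to this structure, so $\T$ has enough projectives and injectives (namely the zero object). We assume $\T$ is small, or at least that the size issues behind the $R$-linear hypothesis in Theorem~\ref{thm:intro-main} are harmless. Then $\T$ is a $\mathbb Z$-linear extriangulated category.

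The next step is to match the hypotheses.

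\begin{enumerate}[{\rm(1)}]
\item \emph{Approximations.} In a triangulated category, a right $\X$-approximation $X\to C$ extends to a triangle $K\to X\to C\to K[1]$, and this triangle is an $\mathfrak s$-conflation whose deflation is the approximation. Since every morphism is a deflation, the ``strong'' requirement in strong contravariant finiteness is automatic. Dually, every morphism is an inflation, so strong covariant finiteness is automatic. Hence functorially finite is the same as strongly functorially finite.
\item \emph{Higher extensions.} Using Gorsky--Nakaoka--Palu \cite[Corollary 3.23]{GNP}, or directly by dimension shifting along the triangles $A\to 0\to A[1]\to A[1]$, we identify $\E^i(C,A)\cong\T(C,A[i])$ for all $i\ge1$.
\item \emph{Consequences.} By (2), $n$-rigidity becomes $\T(\X,\X[i])=0$ for $1\le i\le n$. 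Also $\bigcap_{i=1}^n\X^{\perp_i}=\{C\mid \T(\X,C[i])=0,\ 1\le i\le n\}$, and dually for ${}^{\perp_i}\X$.
\end{enumerate}

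Condition (1) of Theorem~\ref{thm:intro-main} then states exactly that $\X$ is functorially finite, $\X=\{C\mid \T(\X,C[i])=0,\ 1\le i\le n\}$ and $\X=\{C\mid \T(C,\X[i])=0,\ 1\le i\le n\}$. This is the usual definition of an $(n+1)$-cluster tilting subcategory in a triangulated category, which is the triangulated case of \cite[Definition 3.21]{HLN2}. By the definition of Mochizuki--Nakaoka--Ogawa, $n$-truncatedness of $\T/[\X]$ means $\SigX^n\cong\zero$, or equivalently $\OmX^n\cong\zero$. So the equivalence (1)$\Leftrightarrow$(2) of Theorem~\ref{thm:intro-main} yields the corollary.

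The main obstacle is to check that the pretriangulated structure on $\T/[\X]$ from \cite[Lemma 3.21]{ZZtri}, which Theorem~\ref{thm:intro-main} refers to, is the ``canonical pretriangulated quotient'' intended in the corollary. Concretely, we must verify that $\SigX(\piX C)$ is computed from a triangle $C\to X^C\to \SigX C\to C[1]$ with $C\to X^C$ a left $\X$-approximation, and dually for $\OmX$. This is a direct unwinding of the construction, since in the triangulated case conflations are triangles and left $\X$-approximations that are inflations are simply left $\X$-approximations. A secondary point is the smallness hypothesis. If $\T$ is not small, we should either add it as a standing assumption or observe that the arguments of Theorem~\ref{thm:intro-main} only use individual Hom-sets and never quantify over the set of objects.
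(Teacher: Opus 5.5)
Your proposal is correct and is essentially the paper's argument. The paper likewise specializes Theorem~\ref{thm:recognition} via $\E^i(C,A)=\T(C,A[i])$, uses the coincidence of functorial and strong functorial finiteness in $\T$ (Zhou--Zhu, Remark 3.20), and uses the Mochizuki--Nakaoka--Ogawa definition of $n$-truncatedness. The matching of $\SigX$ with left-approximation triangles that you call the main obstacle is simply the definition of the canonical quotient used here. Of your two options for smallness, take the second, as the paper does: run the dimension-shifting proofs directly with $\T(-,-[i])$ and the long exact Hom sequences of $\T$, so that no hypothesis is added. Also correct the slip that every object is projective and injective, since only the zero object is; your conclusions that there are enough of them and that $\E^i\cong\T(-,-[i])$ remain valid.
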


In Corollary~\ref{cor:triangulated-ntruncated}, we give the precise statement, the objectwise formulas, and the proof. The forward implication is
contained in the construction of Mochizuki--Nakaoka--Ogawa
\cite[Example 2.1.5(2), Definition 2.3.1 and Proposition 2.3.23]{MNO}.
In that specialization their heart is $\T/[\X]$, and their suspension is
constructed from the same left $\X$-approximation triangles that define the
canonical quotient suspension used here. The converse implication is new.
Under functorial finiteness and $n$-rigidity, truncation of the canonical
quotient already forces higher cluster tilting.
\vspace{1mm}

Our final main result determines the exact nilpotency exponent.

\begin{theorem}\label{thm:intro-sharp}
Let $(\C,\E,\mathfrak s)$ be a small $R$-linear extriangulated category. Assume that $\C$ is nonsplit and that $\X$ is strongly functorially finite and two-sided maximally $n$-orthogonal. Then the
nilpotency index of $\SigX$ is exactly $n$, and
$\E^{n+1}(\X,\X)\neq0$.
\end{theorem}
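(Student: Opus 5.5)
Since $\X$ is two-sided maximally $n$-orthogonal, Theorem~\ref{thm:intro-main} gives $\SigX^n\cong\zero$. So the nilpotency index is at most $n$, and the task is to show that $\SigX^{n-1}\not\cong\zero$ and that $\E^{n+1}(\X,\X)\neq0$. I would prove the second assertion first and derive the lower bound on the index from it.

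\textbf{Step 1: $\E^{n+1}(\X,\X)\neq0$.} Suppose $\E^{n+1}(\X,\X)=0$. Then $\X$ is $(n+1)$-rigid.
\begin{itemize}
\item[] Take any $C\in\C$. By strong covariant finiteness, iterate left $\X$-approximation conflations $C\to X^0\to C^1$, $C^1\to X^1\to C^2$, and so on, which define $\SigX$.
\item[] Dimension shifting along these conflations, using $\E^i(\X,\X)=0$ for $1\le i\le n+1$, should show that the $n$th cosyzygy $C^n$ lies in $\bigcap_{i=1}^{n}{}^{\perp_i}\X$, which equals $\X$.
\item[] The extra vanishing $\E^{n+1}(\X,\X)=0$ should then let one push one step further: the long exact sequence gives $\E^1(C^{n},C^{n-1})$-type vanishing, so the conflation $C^{n-1}\to X^{n-1}\to C^n$ splits. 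Hence $C^{n-1}\in\X$, and descending inductively, $C\in\X$.
\end{itemize}
This gives $\C=\X$. Every conflation in $\C$ then has all terms in $\X$, and rigidity makes every conflation split, contradicting nonsplitness. I expect the careful bookkeeping of which $\E^i$ vanish at each stage of the descent to be the main obstacle. If the descent is awkward, I would try the alternative route: show that $\E^{n+1}(\X,-)$ vanishing forces $\E^1(C,C')=0$ for all $C,C'$ by shifting both arguments into $\X$, via the identification $\E^{n+1}(C,C')\cong\E^{1}(\text{cosyzygy}, \text{syzygy})$ type isomorphisms from the higher extensions of \cite{GNP}.

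\textbf{Step 2: $\SigX^{n-1}\not\cong\zero$.} Suppose instead $\SigX^{n-1}\cong\zero$. For $n\geq2$, the subcategory $\X$ is in particular $(n-1)$-rigid and strongly functorially finite. I would then apply the one-sided mechanism (Theorem~\ref{thm:one-sided} / Theorem~\ref{thm:intro-local} with $n-1$) to conclude
\[
\X=\bigcap_{i=1}^{n-1}\X^{\perp_i},
\]
and dually via $\OmX^{n-1}\cong\zero$, which is equivalent by the standard adjunction argument. Given this, choose $X,X'\in\X$ and a nonzero $\delta\in\E^{n+1}(X,X')$, which exists by Step 1. Dimension shifting $\delta$ down along the approximation sequence of $X'$ produces an object of $\bigcap_{i=1}^{n-1}\X^{\perp_i}$ that is not in $\X$, since it carries a nonzero $\E^n(\X,-)$ class. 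This contradicts the displayed equality. A cleaner variant is to show directly that the $(n-1)$st $\X$-cosyzygy of $X'$ represents a nonzero object under $\SigX^{n-1}$, because it has nonvanishing $\E^{1}$ against $\X$ and so is not in $\X$.

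For $n=1$, one only needs $\SigX^0=\id\not\cong\zero$, that is, $\C\neq\X$. This follows from nonsplitness together with rigidity, exactly as at the end of Step 1.
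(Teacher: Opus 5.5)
\textbf{The main gap is in Step~2.} Since $X'\in\X$, every left or right $\X$-approximation $\E$-triangle of $X'$ splits and has all its terms in $\X$. So dimension shifting $\delta\in\E^{n+1}(X,X')$ along such a sequence can never produce an object of $\bigcap_{i=1}^{n-1}\X^{\perp_i}$ lying outside $\X$. Even along a nonsplit $X'\to Z\to Y$ with $Z\in\X$, lifting $\delta$ to $\E^n(X,Y)$ would require $\delta\mapsto 0$ in $\E^{n+1}(X,Z)$, and that is exactly the degree that does not vanish. Your ``cleaner variant'' is simply false: $\piX X'=0$, hence $\SigX^{n-1}(\piX X')=0$.

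The nonzero class from Step~1 is the wrong tool here. At level $m=n-1$, the extra vanishing your collapse argument needs is $\E^{m+1}(\X,\X)=\E^{n}(\X,\X)=0$, and $n$-rigidity already supplies it. So once you know that $\SigX^{n-1}\cong\zero$ makes $\X$ two-sided maximally $(n-1)$-orthogonal, rerun Step~1 with $n-1$ in place of $n$. This gives $\C=\X$ and $\E=0$, contradicting nonsplitness. That is how the paper argues: Theorem~\ref{thm:recognition} at level $m$, followed by Proposition~\ref{prop:rigidity-collapse} at level $m$.

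\textbf{Step~1 has the right overall shape but needs a repair.} The class of $C^{n-1}\to X^{n-1}\to C^n$ lies in $\E(C^n,C^{n-1})$. With $C^n\in\X$, its vanishing would follow from $\E(\X,C^{n-1})=0$. However, dimension shifting only identifies $\E(\X,C^{n-1})$ with $\E^{n}(\X,C)$, which need not vanish for arbitrary $C$. So the long exact sequence does not give the splitting you claim.

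What it does give is the exact piece $\E^r(X^j,X)\to\E^r(C^j,X)\to\E^{r+1}(C^{j+1},X)$. By $n$-rigidity this yields an injection $\E^r(C^j,X)\hookrightarrow\E^{r+1}(C^{j+1},X)$ for $1\le r\le n$. If $C^{j+1}\in\X$, the target vanishes by $(n+1)$-rigidity, so $C^j\in\bigcap_{i=1}^n{}^{\perp_i}\X=\X$ directly, with no splitting needed. Your descent from $C^n\in\X$ to $C\in\X$ then goes through.

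This corrected route differs from the paper's. The paper uses approximation towers to show that every object of $\X$ is projective (dually, injective). It then invokes the description $\E^m=\E^{\diamond m}$ of Gorsky--Nakaoka--Palu to kill all $\E^m(\X,-)$ (dually, $\E^m(-,\X)$). Your descent needs only the long exact sequences and the equality $\X=\bigcap_{i=1}^n{}^{\perp_i}\X$.
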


In Theorem~\ref{thm:sharp-truncation}, we give the detailed statement and proof.
In particular, the positive global dimension of $\C$ is at least $n+1$ in the
sense of Gorsky--Nakaoka--Palu \cite[Definition 3.30]{GNP}. The bound is
realized explicitly in every degree in Example~\ref{ex:sharp}. For a nonzero
triangulated category, Corollary~\ref{cor:strict-triangulated} says that a
higher cluster tilting quotient is exactly $n$-truncated, not merely
$n$-truncated.

We emphasize how these results differ from the existing literature. The
construction of Mochizuki--Nakaoka--Ogawa
\cite[Example 2.1.5(2) and Proposition 2.3.23]{MNO} proceeds from higher orthogonality to
truncation, whereas our recognition theorem proves the reverse implication.
The objectwise reconstruction formulas are stronger than global
truncatedness and have no counterpart in that forward construction. The
positive extension calculus of Gorsky--Nakaoka--Palu
\cite[Theorem 3.6]{GNP} also allows the recognition theorem to be proved in
small $R$-linear extriangulated categories without assuming enough
projective or injective objects. In exact categories, Kvamme \cite[Proposition 4.4]{Kvamme} gives a criterion
in terms of two families of resolutions. Proposition~\ref{prop:exact-two-resolutions}
identifies these two families separately with the canonical suspension and
canonical loop. Related but
different quotient constructions are studied by Mochizuki
\cite{MochizukiEMC}, Zhang--Zhou--Zhu \cite{ZZZ26}, Silberberg
\cite{Silberberg26}, and Fang--Gorsky--Palu--Plamondon--Pressland
\cite{FGPPP}.

The specialization to degree one recovers the characterization
of cluster tilting in terms of vanishing suspension, without any smallness or
linearity assumptions. In Section~\ref{sec:unification}, we compare this result with
the known criterion of Zhou--Zhu involving an invertible suspension
\cite[Theorem 4.3]{ZZtri}. We recall the corresponding stable
case only for comparison and do not claim it as a new result.

The paper is organized as follows. In Section~\ref{sec:prelim}, we fix the
notation, recall positive extensions and the known quotient constructions,
and prove the elementary lemmas used later. In
Section~\ref{sec:Truncation and higher orthogonality}, we establish the local
reconstruction theorem and the recognition theorem, and we present the
explicit examples, the interpretation in exact categories, and the sharpness
result. In Section~\ref{sec:unification}, we compare the vanishing and
invertible cases and record the specialization to triangulated categories.

\section{Background and known results}\label{sec:prelim}

Throughout this paper, $R$ is a commutative ring $R$ with $1$ and all categories are additive. An
$R$-linear category has $R$-module morphism groups and $R$-bilinear
composition. Subcategories are full and closed under isomorphisms. The
notation $\X=\add\X$ means that $\X$ is closed under finite direct sums and
direct summands. A category is called \emph{small} when its objects and
morphisms form sets. All results stated for small categories also apply to
essentially small categories after passage to a skeleton.

\subsection{Extriangulated categories and positive higher extensions}

We use the terminology and notation of Nakaoka--Palu \cite{NP}. Thus $(\C,\E,\mathfrak s)$ denotes an extriangulated category and
\[
        A\xrightarrow{x}B\xrightarrow{y}C\overset{\delta}{\dashrightarrow}
\]
denotes an $\E$-triangle realizing $\delta\in\E(C,A)$. We call $\C$ \emph{nonsplit} if $\E\neq0$. An $\E$-triangle is split if and only if its extension class is zero by Nakaoka--Palu \cite[Corollary 3.5]{NP}.

For the higher theory we use the positive extension bifunctors
\[
        \E^i\colon \C^{\op}\times\C\longrightarrow R\text{-}\mathrm{Mod},
      \hspace{2.5mm} i\geq1,
\]
constructed by Gorsky--Nakaoka--Palu \cite[Theorem 3.6]{GNP}. Their theorem gives the long exact sequences in both variables attached to every $\E$-triangle. If enough projectives and injectives exist, Gorsky--Nakaoka--Palu \cite[Corollary 3.23]{GNP} identify these functors with the standard higher extensions defined by resolutions.
We will invoke these results rather than reproduce their proofs.

Following Gorsky--Nakaoka--Palu \cite[Definition 3.30]{GNP}, we use the
positive global dimension
\[
 \operatorname{pgldim}\C
 =\sup\bigl(\{i\geq1\mid\E^i\neq0\}\cup\{0\}\bigr).
\]
The adjoined $0$ fixes the value in the split case.

For classes of objects $\mathcal U,\mathcal V\subseteq\C$, the notation
$\E^i(\mathcal U,\mathcal V)=0$ means $\E^i(U,V)=0$ for every
$U\in\mathcal U$ and $V\in\mathcal V$. An object $P$ is projective if
$\E(P,-)=0$, and an object $I$ is injective if $\E(-,I)=0$.
The category has enough projectives if every object $A$ occurs in an
$\E$-triangle $K\to P\to A\dashrightarrow$ with $P$ projective. Enough
injectives is defined dually. These are the conventions of Nakaoka--Palu
\cite[Definitions 3.23 and 3.25]{NP}.

For $i\geq1$ set
\[
{}^{\perp_i}\X
=\{A\in\C\mid \E^i(A,X)=0\text{ for all }X\in\X\},
\]
and
\[
\X^{\perp_i}
=\{A\in\C\mid \E^i(X,A)=0\text{ for all }X\in\X\}.
\]

\begin{definition}\label{def:nrigid}
A subcategory $\X$ is \emph{$n$-rigid} if
\[
        \E^i(\X,\X)=0,
        \hspace{2.5mm}1\leq i\leq n.
\]
\end{definition}

\begin{definition}\label{def:maxorth}
Let $n\geq1$. A subcategory $\X=\add\X$ is said to satisfy the \emph{two-sided maximal $n$-orthogonality condition} (with respect to the positive extensions $\E^i$) if
\[
\X
=
\bigcap_{i=1}^{n}{}^{\perp_i}\X
=
\bigcap_{i=1}^{n}\X^{\perp_i}.
\]
\end{definition}

\begin{definition}\label{def:cluster-tilting}
A subcategory $\X=\add\X$ is \emph{cluster tilting} if it is strongly functorially finite and
\[
 \X=\{A\in\C\mid\E(\X,A)=0\}
 =\{A\in\C\mid\E(A,\X)=0\}.
\]
This is the convention of Zhou--Zhu \cite[Definition 2.10]{ZZct}.
\end{definition}

More generally, in a setting with the usual higher extensions, an
\emph{$(n+1)$ cluster tilting subcategory} is a functorially finite
subcategory $\X=\add\X$ satisfying
\[
 \X=\bigcap_{i=1}^{n}{}^{\perp_i}\X
   =\bigcap_{i=1}^{n}\X^{\perp_i}.
\]
This indexing agrees with Herschend--Liu--Nakaoka
\cite[Definition 3.21]{HLN2}. A subcategory is \emph{generating} if every
object admits a deflation from an object of the subcategory, and
\emph{cogenerating} if every object admits an inflation into an object of the
subcategory.

For $n=1$, Definition~\ref{def:maxorth}, together with strong functorial finiteness, is exactly Definition~\ref{def:cluster-tilting}. For $n>1$ we retain the descriptive language of maximal higher orthogonality rather than introduce a new cluster tilting terminology in the absence of additional hypotheses.

When enough projectives and injectives exist, Gorsky--Nakaoka--Palu \cite[Corollary 3.23]{GNP} identify the positive higher extensions with the usual higher extensions defined by resolutions. In that setting, after adding functorial finiteness, Definition~\ref{def:maxorth} is precisely the standard $(n+1)$ cluster tilting condition of Herschend--Liu--Nakaoka \cite[Definition 3.21]{HLN2}. Moreover, the two orthogonality equalities force every projective and every injective object to lie in $\X$. Hence ordinary functorial finiteness and strong functorial finiteness agree in this situation by Zhou--Zhu \cite[Remark 2.9]{ZZct}. In an exact category this inclusion of projectives and injectives also makes $\X$ generating and cogenerating, so the formulation agrees with the usual exact category convention. He--Zhou \cite[Definition 2.3 and Theorem 3.5]{HZ} establish the corresponding formulation in terms of $n$-cotorsion pairs.

\subsection{Ideal quotients and canonical pretriangulations}

For an additive subcategory $\X\subseteq\C$, let $[\X](A,B)$ be the subgroup of morphisms $A\to B$ that factor through an object of $\X$. The ideal quotient $\C/[\X]$ has the same objects as $\C$ and morphism groups
\[
        (\C/[\X])(A,B)=\C(A,B)/[\X](A,B).
\]
We write $\piX\colon\C\to\C/[\X]$ for the quotient functor and use $\piX A$ and $\piX f$ for quotient images. A right triangulated category and a left triangulated category are understood in the sense of Beligiannis--Reiten \cite[Chapter II]{BR}. A compatible pair of these structures is called pretriangulated. Compatibility includes an adjunction $\Sigma\dashv\Omega$ between the suspension and loop. We denote by $\zero$ the zero endofunctor, which sends every object and morphism to zero. Following Mochizuki--Nakaoka--Ogawa \cite[Definition 1.2.3]{MNO}, a pretriangulated category is $n$-truncated if $\Sigma^n\cong\zero$, equivalently if $\Omega^n\cong\zero$.

A morphism $x_A\colon A\to X_A$ with $X_A\in\X$ is a \emph{left $\X$-approximation} if every morphism from $A$ to an object of $\X$ factors through $x_A$. The subcategory $\X$ is covariantly finite if every object admits such a morphism. Right $\X$-approximations and contravariant finiteness are defined dually, and $\X$ is functorially finite if it is both covariantly and contravariantly finite.

A subcategory $\X$ is strongly covariantly finite if each $A\in\C$ admits an $\E$-triangle
\[
        A\xrightarrow{x_A}X_A\longrightarrow A^+\dashrightarrow,
        \hspace{2.5mm} X_A\in\X,
\]
with $x_A$ a left $\X$-approximation. We call these left approximation $\E$-triangles. Strong contravariant finiteness is defined dually by right approximation $\E$-triangles, and strong functorial finiteness means both. These are the definitions of Zhou--Zhu \cite[Definition 3.19]{ZZtri}. In a triangulated category, Zhou--Zhu \cite[Remark 3.20]{ZZtri} show that strong functorial finiteness agrees with ordinary functorial finiteness.

We use both the one-sided and the two-sided quotient constructions below.

\begin{theorem}\label{thm:known-onesided}
Let $\X$ be an additive subcategory of an extriangulated category $\C$.
\begin{enumerate}[{\rm(1)}]
\item If $\X$ is strongly covariantly finite, then $\C/[\X]$ carries a canonical right triangulated structure. Its suspension, denoted $\SigX$, is induced by left $\X$-approximation $\E$-triangles.
\item Dually, if $\X$ is strongly contravariantly finite, then $\C/[\X]$ carries a canonical left triangulated structure. Its loop, denoted $\OmX$, is induced by right $\X$-approximation $\E$-triangles.
\end{enumerate}
\end{theorem}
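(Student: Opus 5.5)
The plan is the standard construction of Zhou--Zhu \cite[Lemma 3.21]{ZZtri}. I treat only (1) in detail; (2) follows by applying (1) to the opposite extriangulated category $\C^{\op}$, which interchanges left and right $\X$-approximation $\E$-triangles.

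\emph{Step 1: the suspension on objects.} For each $A\in\C$ fix a left approximation $\E$-triangle $A\xrightarrow{x_A}X_A\xrightarrow{y_A}A^+\overset{\delta_A}{\dashrightarrow}$ and put $\SigX(\piX A)=\piX A^+$.

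\emph{Step 2: the suspension on morphisms.} Let $f\colon A\to B$. Since $x_A$ is a left $\X$-approximation, $x_Bf$ factors as $gx_A$ for some $g\colon X_A\to X_B$. Axiom (ET3) then gives $h\colon A^+\to B^+$ such that $(f,g,h)$ is a morphism of $\E$-triangles, that is, $f_*\delta_A=h^*\delta_B$. Set $\SigX(\piX f)=\piX h$.

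\emph{Step 3: well-definedness.} This rests on one observation: if $(0,g,h)$ is a morphism of $\E$-triangles, then $h^*\delta_B=0$, so $h$ factors through $y_B$ and hence through $X_B$.
\begin{enumerate}[{\rm(i)}]
\item Two choices $(f,g,h)$ and $(f,g',h')$ differ by the morphism $(0,g-g',h-h')$, so $\piX h=\piX h'$.
\item If $f$ factors through an object of $\X$, the approximation property writes $f=b'x_A$ with $b'\colon X_A\to B$. Replacing $g$ by $g-x_Bb'$ turns the triple into a morphism of the form $(0,\ast,h)$, and the observation gives $\piX h=0$.
\item For two different choices of approximation triangle for the same object $A$, apply the construction of Step 2 to $\id_A$ in both directions. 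By (i) the composites are the identity in $\C/[\X]$, so the two values of $A^+$ are isomorphic there.
\end{enumerate}
Functoriality of $\SigX$ and its additivity follow from the same uniqueness statement (i).

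\emph{Step 4: the right triangles.} Take an $\E$-triangle $A\xrightarrow{a}B\xrightarrow{b}C\dashrightarrow$ in which $a$ is an inflation such that $x_A$ factors through $a$. Apply (ET4) to the pair $x_A$, $a$, or equivalently take the pushout of $\delta$ along $x_A$. This produces a morphism $c\colon C\to A^+$, and we declare
\[
\piX A\to\piX B\to\piX C\xrightarrow{\piX c}\SigX\piX A
\]
to be a standard right triangle. The distinguished right triangles are all sequences isomorphic to standard ones. Every morphism $\piX f\colon \piX A\to\piX B$ embeds in a standard right triangle: replace $f$ by the inflation $\binom{x_A}{f}\colon A\to X_A\oplus B$, which gives the same morphism in $\C/[\X]$.

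\emph{Step 5: the Beligiannis--Reiten axioms.} I would verify these in the following order.
\begin{enumerate}[{\rm(i)}]
\item Rotation: combine the pushout diagram of Step 4 with the approximation triangle for $B$.
\item Completion of morphisms to morphisms of triangles: use (ET3) together with Step 3.
\item The octahedral axiom: apply (ET4) to a composable pair of inflations, after adjusting each so that the relevant $\X$-approximations factor through it.
\end{enumerate}
I expect the octahedral axiom to be the main obstacle. The difficulty is to arrange simultaneously that all three $\E$-triangles involved are standard, which needs compatible left approximations at each stage, and then to check that the resulting diagram commutes modulo $[\X]$. I would first reduce to inflations of the form $\binom{x_A}{f}$, and then use (ET4) and its dual (ET4$^{\op}$) to build the octahedron directly in $\C$ before passing to $\C/[\X]$.
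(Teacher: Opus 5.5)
Your outline is the standard construction and is sound. The paper, however, does not carry out any construction here. It quotes (1) as the case $n=1$ of He--He--Zhou, Lemma 3.1, a one-sided result needing only strong covariant finiteness, and it obtains (2) by passing to $\C^{\op}$, exactly as you do. Zhou--Zhu, Lemma 3.21, which you name, is the two-sided statement and assumes strong functorial finiteness. It covers the one-sided claim only because its right triangulated half uses left approximation $\E$-triangles alone, as yours does. Steps 1--3 are correct. Your observation drives all the later checks, and is best stated as follows: if $h,h'\colon M\to Y^+$ satisfy $h^*\delta_Y=h'^*\delta_Y$, then $h-h'$ factors through $y_Y$, so $\piX h=\piX h'$. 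Rebuilding the construction shows exactly where the approximation property enters. The cost is that Step 5 must actually be carried out; at present it is only a plan, and Step 4 contains a misstatement.

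Call an inflation $a\colon A\to B$ $\X$-monic if $x_A=ua$ for some $u$. In Step 4, $x_A$ and $a$ both start at $A$, so (ET4) does not apply to them, and the pushout of $\delta$ along $x_A$ is $u_*a_*\delta=0$. Instead, apply (ET3) to the square $ua=x_A\circ\id_A$; this gives $(\id_A,u,c)$ with $c^*\delta_A=\delta$. The pushout you want is $a_*\delta_A$. By the dual of Nakaoka--Palu, Corollary 3.16, it is realized by an $\E$-triangle $B\to C\oplus X_A\xrightarrow{(c,\,y_A)}A^+$ whose first map is $\X$-monic, and this is what proves rotation.

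In (RT3), take standard triangles on $a$ and $a_1$ with $\piX(ga)=\piX(a_1f)$. Since $ga-a_1f$ only lies in $[\X]$, first write $ga-a_1f=tx_A=tua$, then apply (ET3) to $(f,g-tu)$.

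For (RT4), replace $f\colon A\to B$ by $\binom{x_A}{f}$ and $g\colon B\to C$ by $\operatorname{diag}\bigl(1,\binom{x_B}{g}\bigr)$, so that $f$, $g$ and $gf$ are $\X$-monic inflations simultaneously. Write their $\E$-triangles with deflations $b_f,b_g,b_{gf}$, classes $\delta_f,\delta_g,\delta_{gf}$, and third terms $C',A',B'$. Let $(\id_A,u_{gf},c_{gf})\colon\delta_{gf}\to\delta_A$ be the Step 4 datum for $gf$.
\begin{enumerate}[(i)]
\item (ET4) gives $C'\xrightarrow{d}B'\xrightarrow{e}A'$ with $d^*\delta_{gf}=\delta_f$ and $f_*\delta_{gf}=e^*\delta_g$.
\item Hence $(\id_A,u_{gf}g,c_{gf}d)$ is a morphism $\delta_f\to\delta_A$, so you may choose $c_f=c_{gf}d$. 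This removes your compatibility worry.
\item The same choice shows $d$ is $\X$-monic. For $t\colon C'\to X$, write $tb_f=sg$; then $s=rb_{gf}$, so $(t-rd)b_f=0$. Exactness of $B\to C'\oplus X_A\to A^+$ then gives $t-rd=wc_f=wc_{gf}d$.
\item The remaining squares hold modulo $[\X]$ by the observation above, comparing pullbacks of $\delta_B$ and $\delta_{C'}$. This includes the connecting map $\SigX(\piX b_f)\circ\piX c_g$ of the new triangle.
\end{enumerate}
Finally, right triangles on a fixed morphism are unique up to isomorphism, by (RT3) and the five lemma applied to $\QX(-,X)$. So the given triangles may be replaced by these standard ones, and with these additions your outline becomes a proof of what the paper simply quotes.
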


\proof Part \textup{(1)} is the $n=1$ specialization of the construction of He--He--Zhou \cite[Lemma 3.1]{HHZright}. Part \textup{(2)} follows by applying the same result to the opposite extriangulated category. In particular, the canonical suspension or loop is available under the corresponding one-sided strong finiteness assumption.  \qed

\begin{theorem}\label{thm:known-pretri}
If $\X$ is strongly functorially finite, then the two one-sided structures of Theorem~\ref{thm:known-onesided} combine to a canonical pretriangulated structure on $\C/[\X]$, and $(\SigX,\OmX)$ is an adjoint pair.
\end{theorem}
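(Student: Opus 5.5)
This statement is essentially a citation of Zhou--Zhu \cite[Lemma 3.21]{ZZtri}, which equips $\C/[\X]$ with a pretriangulated structure. So the plan is to identify the structure constructed there with the two one-sided structures of Theorem~\ref{thm:known-onesided}, and then to verify the compatibility axioms of Beligiannis--Reiten \cite[Chapter II]{BR} directly.

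\emph{Step 1: the adjunction $\SigX\dashv\OmX$.} Take a left approximation $\E$-triangle $A\xrightarrow{x_A}X_A\to A^+\dashrightarrow$ and a right approximation $\E$-triangle $B^-\to X^B\xrightarrow{y_B}B\dashrightarrow$. I want a natural isomorphism
\[
(\C/[\X])(A^+,B)\cong(\C/[\X])(A,B^-).
\]

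Forward direction. Given $f\colon A^+\to B$, compose $X_A\to A^+\to B$. This factors through $y_B$, since $y_B$ is a right $\X$-approximation. The resulting square of $\E$-triangles, via (ET3)$^{\op}$ or the dual realization axiom, yields a morphism $A\to B^-$.

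Well-definedness. If $f$ factors through $\X$, then $f$ factors through $y_B$. Using the long exact sequence, the induced map $A\to B^-$ then factors through $x_A$, so it vanishes in the quotient. The inverse map is constructed dually.

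Naturality and mutual inverseness. These reduce to the standard fact that morphisms of $\E$-triangles are unique modulo maps factoring through the middle terms. Concretely, the difference of two lifts $A\to B^-$ with the same end maps factors through $X_A$ or through $X^B$, hence lies in $[\X]$.

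\emph{Step 2: the mixed compatibility axioms.} In \cite{BR}, compatibility of a left and a right triangulation requires two things: the adjunction, and for each right triangle and each left triangle a suitable morphism filling a diagram built from the unit or counit. Both triangulations are images of $\E$-triangles $A\to B\to C\dashrightarrow$ with $B\to C$ pushed out along $x_A$ (respectively pulled back along $y_C$). So the required fill-ins come from (ET4) and (ET4)$^{\op}$ applied to the approximation triangles, together with the description of the unit $A\to\OmX\SigX A$ obtained in Step 1.

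\emph{Main obstacle.} The delicate point is checking that the unit and counit of Step 1 are compatible with the connecting morphisms of the standard triangles. I would handle this by computing both sides on a single $\E$-triangle, using the octahedral-type axioms, and comparing modulo $[\X]$.

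In the paper itself, it suffices to cite \cite[Lemma 3.21]{ZZtri}, noting that its suspension and loop coincide with those of Theorem~\ref{thm:known-onesided}, since both are built from the same approximation $\E$-triangles.
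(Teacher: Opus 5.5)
Your proposal ends where the paper does: the paper proves this statement only by citing Zhou--Zhu's Lemma~3.21, with Beligiannis--Reiten for the pretriangulated formalism, and elsewhere notes that the resulting suspension and loop are the ones induced by the chosen approximation $\E$-triangles, so your final appeal to that citation is the same approach. Your Step~1 argument for $\SigX\dashv\OmX$ is sound; it uses (ET3)/(ET3)$^{\op}$ and the exact sequences $\C(X_A,B^-)\to\C(A,B^-)\to\E(A^+,B^-)$. Your Step~2, however, is only an outline of how the Beligiannis--Reiten compatibility axioms might be checked, so the citation, not your sketch, is what actually proves the statement.
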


\proof This is the construction of Zhou--Zhu \cite[Lemma 3.21]{ZZtri}. See also Zheng--Cao--Wei \cite{ZCW} for a subfactor version in extriangulated categories and Beligiannis--Reiten \cite{BR} for the pretriangulated formalism. We use these known quotient constructions as black boxes.  \qed

\begin{lemma}\label{lem:stable-right}
Let $(\mathcal R,\Sigma,\Delta)$ be a right triangulated category. If $\Sigma$ is an autoequivalence, then $(\mathcal R,\Sigma,\Delta)$ is a triangulated category.
\end{lemma}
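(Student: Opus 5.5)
The plan is to define a left triangulated structure on $\mathcal R$ using the inverse $\Sigma^{-1}$ and to check that it coincides with the right triangles read backwards, so that the axioms of a triangulated category (in the sense of Beligiannis--Reiten, i.e.\ a right triangulated category whose suspension is an equivalence) are met. In fact, Beligiannis--Reiten \cite[Chapter II]{BR} observe that a right triangulated category with $\Sigma$ an equivalence is exactly a triangulated category. The proof will make this explicit by verifying Verdier's axioms from the right triangulated axioms.

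\textbf{Step 1.} Choose a quasi-inverse $\Sigma^{-1}$ together with natural isomorphisms $\eta\colon\id\cong\Sigma\Sigma^{-1}$ and $\varepsilon\colon\Sigma^{-1}\Sigma\cong\id$. We may replace it by an adjoint equivalence so that the triangle identities hold.

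\textbf{Step 2.} The axioms of a right triangulated category already give several of Verdier's axioms:
\begin{enumerate}[{\rm(i)}]
\item closure under isomorphism,
\item identity triangles,
\item existence of a triangle on every morphism,
\item the forward rotation $B\to C\to\Sigma A\xrightarrow{-\Sigma f}\Sigma B$,
\item completion of morphisms of triangles,
\item the octahedral axiom.
\end{enumerate}
The only missing axiom is the \emph{backward rotation}. Given a triangle $A\xrightarrow{f}B\xrightarrow{g}C\xrightarrow{h}\Sigma A$, we must show that
\[
\Sigma^{-1}C\xrightarrow{-\varepsilon\circ\Sigma^{-1}h}A\xrightarrow{f}B\xrightarrow{g\circ\eta_C^{-1}\circ(\ldots)}C
\]
is a triangle (after the appropriate identifications).

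\textbf{Step 3.} To prove backward rotation, complete the morphism $u=-\varepsilon_A\Sigma^{-1}h\colon\Sigma^{-1}C\to A$ to a right triangle $\Sigma^{-1}C\xrightarrow{u}A\xrightarrow{f'}B'\xrightarrow{g'}\Sigma\Sigma^{-1}C$. Rotate it forward twice. This yields a triangle beginning $B'\to\Sigma\Sigma^{-1}C\to\Sigma A$ whose third map is, up to $\eta$, equal to $h$. The rotated triangle on $A\to B$ and the original triangle share the morphism $h$ after the identification $C\cong\Sigma\Sigma^{-1}C$. Rotate the original triangle forward once to get $B\to C\xrightarrow{h}\Sigma A\to\Sigma B$, and compare it with the doubly rotated new triangle, which contains $\Sigma\Sigma^{-1}C\to\Sigma A\to\Sigma B'$. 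Using the morphism axiom on these two triangles, with identities or $\eta$ on the shared terms, produces a map $\Sigma B\to\Sigma B'$ (or $B\to B'$ after unrotating). The five-lemma style argument then shows this comparison map is an isomorphism. It applies because $\Sigma$ is faithful and full, so $\Hom(-,X)$ is exact along right triangles with their rotations, which can now be extended to the left. Consequently the candidate rotated sequence is isomorphic to a right triangle, and backward rotation follows.

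\textbf{Main obstacle.} The main obstacle is exactly this isomorphism step. In a one-sided category, the long exact $\Hom$ sequences are only half-infinite. Invertibility of $\Sigma$ is what lets us extend the sequence $\Hom(\Sigma B,-)\to\Hom(\Sigma A,-)\to\Hom(C,-)\to\Hom(B,-)\to\Hom(A,-)$ on both sides, so that the five lemma, followed by Yoneda, applies. Keeping track of the signs and of the identifications through $\eta$ and $\varepsilon$ is routine but must be done carefully; choosing the adjoint equivalence in Step 1 keeps these coherent.
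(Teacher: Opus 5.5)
Your argument is correct in outline, but it takes a different route from the paper. The paper gives no argument for this lemma: it simply cites the standard right triangulated formalism (Lin--Wang, Section 2.1.1). You instead reconstruct the standard proof that the one missing Verdier axiom, backward rotation, holds once $\Sigma$ is an equivalence. The citation is economical. Your version is self-contained and shows where each ingredient enters:
essential surjectivity to form $\Sigma^{-1}C$;
the triangle identity $\Sigma\varepsilon_A\circ\eta_{\Sigma A}=1_{\Sigma A}$ to get $(-\Sigma u)\circ\eta_C=h$;
fullness to lift the comparison map;
full faithfulness to transfer commutativity and invertibility back.
Two steps need tightening.

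\textbf{The rotation count.} The right triangulated morphism axiom only produces the third component from the first two, so the shared objects must occupy positions one and two. You should compare the original triangle rotated twice, $C\xrightarrow{h}\Sigma A\xrightarrow{-\Sigma f}\Sigma B\xrightarrow{-\Sigma g}\Sigma C$, with the new triangle rotated three times, $\Sigma\Sigma^{-1}C\xrightarrow{-\Sigma u}\Sigma A\xrightarrow{-\Sigma f'}\Sigma B'\xrightarrow{-\Sigma g'}\Sigma\Sigma\Sigma^{-1}C$, using $\eta_C$ and $1_{\Sigma A}$. With your counts (once and twice) the shared objects sit in positions two and three, where no filling is available.

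\textbf{The five lemma.} Your justification points the wrong way. In any right triangulated category, the contravariant sequence $\Hom(\Sigma^2A,X)\to\Hom(\Sigma C,X)\to\Hom(\Sigma B,X)\to\Hom(\Sigma A,X)\to\Hom(C,X)$ and its primed analogue are exact at the three interior terms. These sequences are already infinite to the left. So the five lemma applies directly to the morphism $(\eta_C,1_{\Sigma A},\psi,\Sigma\eta_C)$, with no extension needed. An extension to the right, meaning exactness at $\Hom(A,X)$ against $\Hom(\Sigma^{-1}C,X)$, would be circular: it is essentially the backward rotation you are proving. Yoneda then makes $\psi$ invertible and fullness gives $\psi=\Sigma\phi$. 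Full faithfulness gives $\phi f=f'$, $g'\phi=\eta_C g$, and $\phi$ invertible. Hence $\Sigma^{-1}C\xrightarrow{u}A\xrightarrow{f}B\xrightarrow{\eta_C g}\Sigma\Sigma^{-1}C$ is isomorphic to a triangle in $\Delta$. Note that the last map is $\eta_C\circ g$, not the expression you displayed in Step 2.
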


\proof This is part of the standard right triangulated formalism recalled by Lin--Wang \cite[Section 2.1.1]{LW}. We use it only for the canonical right triangulated structure on an ideal quotient.  \qed

\subsection{The two known endpoints}

We record precisely the two established quotient theorems that motivate the present work.

\begin{theorem}[Abelian endpoint]\label{thm:known-abelian}
Let $\C$ be an extriangulated category with enough projectives and enough injectives, and let $\X$ be a cluster tilting subcategory. Then $\C/[\X]$ is equivalent to $\mathrm{mod}\,(\X/[\mathcal P])$ and in particular is abelian.
\end{theorem}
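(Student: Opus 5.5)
This statement is a known result: the paper records it as the "abelian endpoint" and cites Zhou--Zhu \cite[Corollary 3.5]{ZZct} and Liu--Zhou \cite{LZabel}. The plan is therefore to assemble it from those sources rather than reprove it. For orientation, though, here is how I would prove it directly, in the style of Koenig--Zhu \cite[Theorem 3.3]{KZ}.

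\textbf{Step 1: the functor.} Let $\mathcal P$ be the subcategory of projectives. Cluster tilting forces $\mathcal P\subseteq\X$, because $\E(\X,P)=0$ for every projective $P$. Define
\[
F\colon\C\to\mathrm{mod}\,(\X/[\mathcal P]),\qquad A\mapsto \E(-,\Omega A)|_{\X}.
\]
Here $\Omega A$ is taken from a triangle $\Omega A\to P\to A\dashrightarrow$ with $P$ projective. Equivalently, using a right $\X$-approximation triangle $K\to X\to A\dashrightarrow$, one may set $F(A)=\C(-,A)|_{\X}$ modulo $[\mathcal P]$. I would use the second description: $F(A)=\C(-,A)/[\mathcal P]$ restricted to $\X$. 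This is finitely presented over $\X/[\mathcal P]$: applying $\C(X',-)$ to the approximation triangle and using $\E(\X,\X)=0$ gives a presentation, since $K\in\X$ by a standard argument from cluster tilting.

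\textbf{Step 2: $F$ is full and dense.} For density, take a presentation $\X(-,X_1)\to\X(-,X_0)\to M\to0$. Complete the map $X_1\to X_0$ to an object $A$ by using an $\E$-triangle: take a right $\X$-approximation of the cone, or use enough injectives to realize $X_1\to X_0$ as part of a triangle. Then check $F(A)\cong M$. Fullness follows from Yoneda on the representable parts together with the approximation triangles.

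\textbf{Step 3: the kernel of $F$ on morphisms is exactly $[\X]$.} This is the main obstacle. Suppose $f\colon A\to B$ induces zero on $\C(\X,-)/[\mathcal P]$. Compose $f$ with the deflation $X_A\to A$. The composite then factors through a projective, and one must deduce that $f$ itself factors through $\X$. This requires $\E(\X,\X)=0$ together with the maximality $\X=\X^{\perp_1}$. Concretely, I would show that the obstruction class lies in $\E(A^-,\X)$, which vanishes. Here enough injectives is used to lift factorizations through $\mathcal P$ into factorizations through $\X$.

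\textbf{Conclusion.} Steps 1--3 give an equivalence $\C/[\X]\simeq\mathrm{mod}\,(\X/[\mathcal P])$. Abelianness then follows because $\X/[\mathcal P]$ has weak kernels, which come from the right approximation triangles. So the module category is abelian.

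In the paper itself I would simply state that this is \cite[Corollary 3.5]{ZZct}, with the abelian conclusion also in \cite{LZabel}, and use it as a black box.
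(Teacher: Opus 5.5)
Your operative plan, which is to cite Zhou--Zhu, Corollary~3.5, as a black box, is exactly what the paper does. The paper offers no argument beyond that citation, so at that level your proposal matches. The direct ``orientation'' sketch, however, goes wrong at its first step and should not be offered as a proof outline.

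The problem is that Step~1 uses the wrong functor. Since $\mathcal P\subseteq\X$ and $\X$ is rigid, $\E(X,P)=0$ for $X\in\X$. So the triangle $\Omega A\to P\to A\dashrightarrow$ gives $\E(X,\Omega A)\cong\underline{\Hom}_{\C}(X,A)$, and $F(A)=\underline{\Hom}_{\C}(-,A)|_{\X}$, as you say.

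But then $F(X')=(\X/[\mathcal P])(-,X')\neq0$ for every non-projective $X'\in\X$. Such $X'$ exist whenever $\E\neq0$: if $\X=\mathcal P$, then $\X=\{A\mid\E(\X,A)=0\}=\C$, so $\E=\E(\X,\X)=0$. Hence $F(\id_{X'})\neq0$ although $\id_{X'}\in[\X]$. So $F$ does not factor through $\C/[\X]$, and the claim in Step~3 that its kernel is $[\X]$ is false. Your $F$ is the analogue of $\T(\X,-)$, which in the triangulated case identifies $\T/[\X[1]]$ with $\mathrm{mod}\,\X$, not $\T/[\X]$.

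The functor that works is $G(A)=\E(-,A)|_{\X}$. It kills $\X$ by rigidity and kills $\mathcal P$ by projectivity. Take a left approximation triangle $A\xrightarrow{x}X^0\to X^1\overset{\delta}{\dashrightarrow}$. Then $\E(X^1,\X)=0$, so $X^1\in\X$, and this gives a presentation $(\X/[\mathcal P])(-,X^0)\to(\X/[\mathcal P])(-,X^1)\to G(A)\to0$.

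What you call the main obstacle then takes one line. If $G(g)=0$, then $g_*\delta=0$, so $g$ factors through $x$ and lies in $[\X]$; no lifting argument via injectives is needed. Fullness follows from (ET3)$^{\op}$, once a given natural transformation is lifted to a morphism between the third terms of the approximation triangles of $A$ and $B$.

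For density, you cannot take a cone of an arbitrary $f\colon X_1\to X_0$ in an extriangulated category. Instead, take the cocone $A$ of the deflation $(f\ p)\colon X_1\oplus P\to X_0$, where $p$ is a projective deflation. One checks that $G(A)\cong\operatorname{Coker}\,(\X/[\mathcal P])(-,f)$.

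Finally, $\mathcal P\subseteq\X$ holds because $\E(P,\X)=0$, that is, $P\in{}^{\perp_1}\X=\X$. The condition $\E(\X,P)=0$ that you invoke is not automatic.
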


\proof This is due to Zhou--Zhu \cite[Corollary 3.5]{ZZct}. The triangulated specialization of Zhou--Zhu \cite[Corollary 3.6]{ZZct} recovers the theorem of Koenig--Zhu \cite[Theorem 3.3]{KZ}. We do not repeat these known abelian quotient results.  \qed

Following Zhou--Zhu \cite[Section 4]{ZZtri}, let $\mathcal P$ and $\mathcal I$ denote the projective and injective subcategories. Write $\underline{\X}$ for the full subcategory of $\C/[\mathcal P]$ consisting of objects isomorphic to images of objects of $\X$, and write $\overline{\X}$ dually for the corresponding full subcategory of $\C/[\mathcal I]$. Thus these symbols denote stable and costable images, not differences of sets. Set
\[
 \underline{\Hom}_{\C}(A,B):=\C(A,B)/[\mathcal P](A,B),
\hspace{2.5mm}
 \overline{\Hom}_{\C}(A,B):=\C(A,B)/[\mathcal I](A,B).
\]
The pair $(\C,\C)$ is an $\X$-mutation pair if every $A\in\C$ occurs in an $\E$-triangle
\[
 A\longrightarrow X\longrightarrow B\dashrightarrow
\]
with $X\in\X$, the first morphism a left $\X$-approximation and the second a
right $\X$-approximation. Every $B\in\C$ must also occur in such a triangle.
This is the specialization of Zhou--Zhu \cite[Definition 3.1]{ZZtri}.

For the next theorem, the \emph{Auslander--Reiten duality hypotheses} are those of Zhou--Zhu \cite[Theorem 4.3]{ZZtri}: $\C$ is a Krull--Schmidt $k$-linear extriangulated category with Auslander--Reiten translations $\tau$ and $\tau^{-1}$, and there are functorial isomorphisms
\[
 \E(A,B)
 \cong D\overline{\Hom}_{\C}(B,\tau A)
 \cong D\underline{\Hom}_{\C}(\tau^{-1}B,A),
 \hspace{2.5mm} D=\Hom_k(-,k),
\]
for all $A,B\in\C$.

\begin{theorem}[Triangulated endpoint]\label{thm:known-triangulated}
Assume the Auslander--Reiten duality hypotheses just stated, and let $\X$ be strongly functorially finite. Then the following are equivalent:
\begin{enumerate}[{\rm(1)}]
\item $(\C,\C)$ is an $\X$-mutation pair.
\item the canonical pretriangulated category $\C/[\X]$ is triangulated.
\item $\tau\underline{\X}=\overline{\X}$.
\item the canonical suspension $\SigX$ is an autoequivalence of $\C/[\X]$.
\end{enumerate}
If $\C$ is triangulated, this specializes to
\[
        \C/[\X]\text{ triangulated}
        \quad\Longleftrightarrow\quad
        \tau\X=\X.
\]
\end{theorem}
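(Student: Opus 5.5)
The plan is to treat this as a known result. Most of the work is delegated to Zhou--Zhu \cite[Theorem 4.3]{ZZtri}, and only the small additions needed for condition (4) and for the triangulated specialization are argued directly. Under exactly the Auslander--Reiten duality hypotheses recalled above, Zhou--Zhu prove the equivalence of (1), (2) and (3) for a strongly functorially finite $\X$. I would quote that theorem for these three conditions and not reprove any of them.

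The new part is condition (4). For (2)$\Rightarrow$(4): if the canonical pretriangulated structure of Theorem~\ref{thm:known-pretri} is triangulated, its suspension is by definition an autoequivalence, and this suspension is $\SigX$. For (4)$\Rightarrow$(2), the steps are as follows.
\begin{enumerate}[(a)]
\item By Theorem~\ref{thm:known-onesided}(1), $\C/[\X]$ is right triangulated with suspension $\SigX$. If $\SigX$ is an autoequivalence, Lemma~\ref{lem:stable-right} makes this right triangulated structure a triangulated one.
\item It remains to check that this triangulated structure is the canonical pretriangulated one, that is, that the left structure matches as well. Since $\SigX\dashv\OmX$ by Theorem~\ref{thm:known-pretri}, and $\SigX$ is an equivalence, $\OmX$ is a quasi-inverse of $\SigX$.
\item The left triangles are obtained from right triangles by rotation. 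This is the standard compatibility in the Beligiannis--Reiten formalism \cite[Chapter II]{BR}. So the left structure is the one induced by the triangulated structure from (a).
\end{enumerate}
Step (b), identifying the left triangles, is the only point requiring care. If it proves awkward, I would instead prove (4)$\Rightarrow$(1) directly. For each $A$, take the left approximation $\E$-triangle $A\to X_A\to A^+\dashrightarrow$. Invertibility of $\SigX$ should show that $X_A\to A^+$ is also a right $\X$-approximation, which gives the mutation pair condition.

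For the triangulated specialization, let $\C$ be triangulated. Then there are no nonzero projective or injective objects, so $\mathcal P=\mathcal I=0$ and both quotients $\C/[\mathcal P]$ and $\C/[\mathcal I]$ are $\C$ itself. Hence $\underline{\X}=\overline{\X}=\X$. Moreover, Serre duality in the Krull--Schmidt Hom-finite setting supplies the Auslander--Reiten hypotheses, with $\tau=\Sigma\circ S^{-1}$. Zhou--Zhu \cite[Remark 3.20]{ZZtri} show that functorial finiteness agrees with strong functorial finiteness in this setting. Condition (3) therefore becomes $\tau\X=\X$, which recovers J\o rgensen's criterion \cite[Theorem 3.3]{Jor}.
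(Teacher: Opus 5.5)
Your proposal is correct and follows the paper's proof. The paper cites Zhou--Zhu for (1)--(3), treats (2)$\Rightarrow$(4) as immediate, and obtains (4)$\Rightarrow$(2) from Lemma~\ref{lem:stable-right} applied to the canonical right triangulated structure. Your check that the left triangles are the rotated right ones (with $\OmX$ quasi-inverse to $\SigX$) makes explicit a point the paper leaves implicit. Your direct unpacking of the triangulated case via $\mathcal P=\mathcal I=0$ replaces the paper's citation of Zhou--Zhu's Corollary 4.4.

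One harmless slip: with a Serre functor $S$ one has $\tau\cong S\circ[-1]$, so your $\Sigma\circ S^{-1}$ is actually $\tau^{-1}$. This does not matter, since $\tau$ is given in the hypotheses and $\tau\X=\X$ if and only if $\tau^{-1}\X=\X$.
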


\proof The equivalence of conditions \textup{(1)}--\textup{(3)} is due to Zhou--Zhu \cite[Theorem 4.3]{ZZtri}. The implication \textup{(2)}$\Rightarrow$\textup{(4)} is immediate, while \textup{(4)}$\Rightarrow$\textup{(2)} follows from Lemma~\ref{lem:stable-right} applied to the canonical right triangulated structure. The triangulated specialization of Zhou--Zhu \cite[Corollary 4.4]{ZZtri} recovers J\o rgensen \cite[Theorem 3.3]{Jor}. We use these facts only as a cited endpoint.  \qed

\subsection{
The canonical quotient suspension and elementary lemmas}\label{sec:canonical}

For the constructions involving $\SigX$, assume first only that $\X$ is strongly covariantly finite and use the canonical right triangulated quotient from Theorem~\ref{thm:known-onesided}. For each $A\in\C$, choose once and for all a left approximation $\E$-triangle
\[
A\xrightarrow{x_A}X_A\longrightarrow A^+\dashrightarrow,
       \hspace{2.5mm} X_A\in\X,
\]
so that
\[
        \SigX(\piX A)=\piX(A^+).
\]
The independence of the resulting functor, up to natural isomorphism, is part of the known one-sided quotient construction in Theorem~\ref{thm:known-onesided} and will not be reproved. When $\X$ is strongly functorially finite, this $\SigX$ agrees with the suspension in the canonical pretriangulated structure of Theorem~\ref{thm:known-pretri}.

We also need two elementary categorical observations.

\begin{lemma}\label{lem:objectwise-zero}
Let $F\colon\A\to\mathcal B$ be a functor and suppose that $\mathcal B$ has a zero object. Then the following are equivalent:
\begin{enumerate}[{\rm(1)}]
\item $F(A)$ is a zero object for every $A\in\A$.
\item $F\cong\zero$ naturally.
\end{enumerate}
If a distinguished zero object has been fixed and $F(A)=0$ literally for every $A$, then $F$ is literally the zero functor.
\end{lemma}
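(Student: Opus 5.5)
The plan is to prove the two implications directly, keeping track of what a natural isomorphism to $\zero$ consists of. Fix a zero object $0$ of $\mathcal B$. The main observation is that a natural isomorphism $\eta\colon F\Rightarrow\zero$ exists as soon as each $F(A)$ is a zero object. The reason is that between zero objects there is exactly one morphism, and it is an isomorphism.

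For (2)$\Rightarrow$(1), let $\eta\colon F\Rightarrow\zero$ be a natural isomorphism. Each component $\eta_A\colon F(A)\to\zero(A)=0$ is an isomorphism. An object isomorphic to a zero object is itself a zero object: for any object $B$, composition with $\eta_A$ identifies $\mathcal B(F(A),B)$ with $\mathcal B(0,B)$, which is a singleton, and the same argument works for morphisms into $F(A)$. So $F(A)$ is a zero object for every $A$.

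For (1)$\Rightarrow$(2), for each $A$ take $\eta_A\colon F(A)\to 0$ to be the unique morphism. Since $F(A)$ and $0$ are both zero objects, the unique morphism $0\to F(A)$ is inverse to $\eta_A$. This holds because each composite is an endomorphism of a zero object and therefore equals the identity. For naturality, let $f\colon A\to A'$. The two composites $\zero(f)\circ\eta_A$ and $\eta_{A'}\circ F(f)$ are both morphisms $F(A)\to 0$, and there is only one such morphism, so they agree. Hence $\eta$ is a natural isomorphism $F\cong\zero$.

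For the final sentence, suppose $F(A)=0$ literally for every $A$. Then for each $f\colon A\to A'$, $F(f)$ is a morphism $0\to0$ and so equals $\id_0$. This is exactly $\zero(f)$. Since $F$ and $\zero$ agree on objects and on morphisms, $F=\zero$ literally.

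No step is a real obstacle. The only point needing care is that naturality and invertibility both follow from uniqueness of morphisms into and out of zero objects, and that "zero object" is stable under isomorphism.
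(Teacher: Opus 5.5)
Your proof is correct and follows the same argument as the paper. The components are the unique morphisms $F(A)\to 0$, which are isomorphisms and natural by uniqueness, and the literal statement holds because the only morphism $0\to 0$ is $\id_0$; you also write out (2)$\Rightarrow$(1) via isomorphism-invariance of zero objects, which the paper leaves implicit.
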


\begin{proof}
For every $A$, the unique morphism $F(A)\to0$ is an isomorphism because both objects are zero. These morphisms are automatically natural, again by uniqueness. Hence $F\cong\zero$. The final assertion follows because the only morphism $0\to0$ is the zero morphism.
\end{proof}

\begin{lemma}\label{lem:adjoint-zero}
Let $F$ and $G$ be additive endofunctors such that $F$ is left adjoint to $G$.
For every $n\geq 1$,
\[
F^n\cong\zero
\quad\Longleftrightarrow\quad
G^n\cong\zero.
\]
\end{lemma}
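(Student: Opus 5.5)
The plan is to use that adjunctions compose: from $F\dashv G$ we get $F^n\dashv G^n$ for every $n\geq1$, by induction. Given $F^{k}\dashv G^{k}$, there are natural isomorphisms
\[
\Hom(F^{k+1}A,B)\cong\Hom(F^{k}A,GB)\cong\Hom(A,G^{k}GB)=\Hom(A,G^{k+1}B).
\]
So it suffices to prove the case of a single adjoint pair: if $F\dashv G$, then $F\cong\zero$ if and only if $G\cong\zero$. We then apply this to the pair $F^n\dashv G^n$. Both functors are additive endofunctors of an additive category, so a zero object is available and Lemma~\ref{lem:objectwise-zero} applies.

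Suppose $F\cong\zero$. Then $F(A)$ is a zero object for every $A$. For any $B$ the adjunction gives
\[
\Hom(A,G B)\cong\Hom(FA,B)=0
\]
for all objects $A$. Taking $A=GB$, the identity of $GB$ is zero, so $GB$ is a zero object. By Lemma~\ref{lem:objectwise-zero}, $G\cong\zero$.

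The converse is symmetric. If $G\cong\zero$, then $\Hom(FA,B)\cong\Hom(A,GB)=0$ for all $B$. Taking $B=FA$ shows that $FA\cong0$, and Lemma~\ref{lem:objectwise-zero} again gives $F\cong\zero$.

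No step is a real obstacle. The only point needing care is the composition of adjunctions, and even that can be bypassed: iterating the displayed isomorphism $n$ times directly gives $\Hom(F^nA,B)\cong\Hom(A,G^nB)$ naturally in $A$ and $B$. Additivity is used only so that "zero Hom-set" means the group $0$ and the identity being zero forces the object to be zero.
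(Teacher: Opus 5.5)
Your proposal is correct and follows the paper's proof essentially verbatim: form the adjunction $F^n\dashv G^n$, use $\Hom(A,G^nB)\cong\Hom(F^nA,B)=0$ with $A=G^nB$ to force $\id_{G^nB}=0$, conclude with Lemma~\ref{lem:objectwise-zero}, and argue the converse symmetrically. The only difference is that you spell out the induction showing that adjunctions compose, which the paper takes for granted.
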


\begin{proof}
For every $n\geq 1$, the functor $F^n$ is left adjoint to $G^n$.
Suppose that $F^n\cong\zero$. Then, for all objects $A$ and $B$,
\[
\Hom(A,G^nB)\cong\Hom(F^nA,B)=0.
\]
Taking $A=G^nB$, we obtain $\id_{G^nB}=0$. Hence $G^nB$ is a zero
object for every $B$. Lemma~\ref{lem:objectwise-zero} then gives
$G^n\cong\zero$. The converse follows by the same argument.
\end{proof}

\begin{corollary}\label{cor:sigma-omega}
Assume that $\X$ is strongly functorially finite. For the canonical pretriangulated quotient,
\[
        \SigX^n\cong\zero
        \quad\Longleftrightarrow\quad
        \OmX^n\cong\zero.
\]
\end{corollary}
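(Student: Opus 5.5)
The plan is to combine Theorem~\ref{thm:known-pretri} with Lemma~\ref{lem:adjoint-zero}. Since $\X$ is strongly functorially finite, Theorem~\ref{thm:known-pretri} equips $\C/[\X]$ with its canonical pretriangulated structure. In that structure the suspension $\SigX$ is left adjoint to the loop $\OmX$.

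Both functors are additive. The quotient $\C/[\X]$ is an additive category, being the ideal quotient of an additive category by an additive ideal. The suspension and loop of a right or left triangulated category in the sense of Beligiannis--Reiten are additive by definition. Alternatively, additivity follows from the adjunction itself: a left or right adjoint between additive categories preserves finite biproducts. Hence the hypotheses of Lemma~\ref{lem:adjoint-zero} hold with $F=\SigX$ and $G=\OmX$.

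Applying Lemma~\ref{lem:adjoint-zero} then gives $\SigX^n\cong\zero$ if and only if $\OmX^n\cong\zero$, for every $n\geq1$.

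One point needs checking. The $\SigX$ appearing here, defined in Section~\ref{sec:canonical} via left approximation $\E$-triangles, must be the suspension in the pretriangulated structure. The paper records this identification right after choosing the triangles. The same holds dually for $\OmX$.

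There is no real obstacle. The only content is the adjunction, which is supplied as a black box by Theorem~\ref{thm:known-pretri}. The rest is the formal argument of Lemma~\ref{lem:adjoint-zero}: the functor $F^n$ is left adjoint to $G^n$, so $\Hom(G^nB,G^nB)\cong\Hom(F^nG^nB,B)=0$ forces every object $G^nB$ to be zero. Lemma~\ref{lem:objectwise-zero} then upgrades objectwise vanishing to a natural isomorphism with $\zero$.
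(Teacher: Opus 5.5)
Your proposal is correct and follows the paper's own argument: the adjunction $\SigX\dashv\OmX$ from Theorem~\ref{thm:known-pretri} is combined with Lemma~\ref{lem:adjoint-zero}. Your extra remarks on additivity and on identifying the left-approximation suspension with the pretriangulated one are accurate but not needed beyond what the paper already records.
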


\begin{proof}
By Theorem~\ref{thm:known-pretri}, the relevant functors form an adjoint pair.
The result now follows immediately from Lemma~\ref{lem:adjoint-zero}.
\end{proof}

Starting from $A_0=A$, recursively choose the fixed left approximation $\E$-triangles
\begin{equation}\label{eq:tower}
 A_j\xrightarrow{x_j}X_j\longrightarrow A_{j+1}\dashrightarrow,
 \hspace{2.5mm} X_j\in\X,
  \hspace{2.5mm}0\leq j\leq n-1.
\end{equation}
By construction,
\begin{equation}\label{eq:sigma-tower}
        \SigX^j(\piX A)\cong\piX(A_j)
        \hspace{2.5mm}(0\leq j\leq n).
\end{equation}

Dually, when $\X$ is strongly contravariantly finite, start with $B_0=A$
and choose right approximation $\E$-triangles
\begin{equation}\label{eq:right-tower}
 B_{j+1}\longrightarrow X^j\xrightarrow{y_j}B_j\dashrightarrow,
  \hspace{2.5mm} X^j\in\X,
  \hspace{2.5mm} 0\leq j\leq n-1.
\end{equation}
Then
\begin{equation}\label{eq:omega-tower}
 \OmX^j(\piX A)\cong\piX(B_j)
  \hspace{2.5mm}(0\leq j\leq n).
\end{equation}

We will also use the following elementary property of ideal quotients.

\begin{lemma}\label{lem:zero-object-quotient}
Assume $\X=\add\X$. Then $\piX A$ is a zero object in $\C/[\X]$ if and only if $A\in\X$.
\end{lemma}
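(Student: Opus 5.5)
The argument is elementary and works directly with identity morphisms in the ideal quotient. In an additive category an object is a zero object exactly when its identity morphism is zero. So $\piX A$ is a zero object in $\C/[\X]$ if and only if $\piX(\id_A)=0$, that is, if and only if $\id_A\in[\X](A,A)$. The plan is to prove both implications by unpacking this condition.

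For the implication ``$A\in\X\Rightarrow\piX A\cong0$'', observe that $\id_A$ factors through $A$ itself, which lies in $\X$. Hence $\id_A\in[\X](A,A)$, so $\piX(\id_A)=0$ and $\piX A$ is a zero object.

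For the converse, suppose $\piX(\id_A)=0$. Then there are an object $X\in\X$ and morphisms $f\colon A\to X$, $g\colon X\to A$ with $gf=\id_A$. One point needs care here: $[\X](A,A)$ is the set of morphisms factoring through some object of $\X$. Because $\X$ is additive and closed under finite direct sums, a finite sum of such morphisms still factors through a single object of $\X$, namely the direct sum of the objects involved. So we may indeed take a single $X$. It follows that $fg$ is an idempotent endomorphism of $X$ and that $A$ is a retract of $X$.

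To conclude, I would argue that $A$ is a direct summand of $X$. Since $\C$ is additive, $\id_X-fg$ is an idempotent, but it need not have a kernel in $\C$ unless idempotents split. Instead I would use the retraction directly: the canonical map $A\to X$ is a split monomorphism. The cleanest route is the extriangulated structure. Split monomorphisms in an extriangulated category are inflations with split $\E$-triangles, so the cokernel object $C$ exists and $X\cong A\oplus C$. Concretely, one can invoke Nakaoka--Palu's result that a section is part of a split $\E$-triangle, whose realization is the direct sum triangle. Alternatively, one may read the convention $\X=\add\X$, closure under direct summands, as including retracts, in which case this step is immediate. Since $A$ is a direct summand of $X\in\X$, closure under summands gives $A\in\X$.

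The only delicate step is this identification of a retract with a direct summand. I would first try to handle it via the split $\E$-triangle argument from \cite{NP}, and otherwise fall back on interpreting $\add$-closure as closure under retracts.
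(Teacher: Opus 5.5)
Your outline is the paper's proof. From $\piX A=0$ you get a factorization $\id_A=gf$ through some $X\in\X$, with $f\colon A\to X$ and $g\colon X\to A$, and then conclude that $A$ is a direct summand of $X$; the paper asserts this last step in one line. You are right that passing from retract to summand is the only real point. However, the justification you prefer is false. Corollary~3.5 of Nakaoka--Palu concerns an $\E$-triangle $A\xrightarrow{x}B\xrightarrow{y}C\overset{\delta}{\dashrightarrow}$ that is already given: $\delta=0$ if and only if $x$ is a section. It does not say that every section is an inflation, and without idempotent splitting that fails.

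Here is a counterexample. Let $k_1,k_2$ be the simple modules over $k\times k$, and let $\C$ be the full subcategory of $\mathrm{mod}(k\times k)$ on the modules $k_1^{a}\oplus k_2^{b}$ with $a\geq b$, with the split structure $\E=0$. Let $\X$ consist of the free modules $(k\times k)^m$.
\begin{enumerate}
\item If $(k\times k)^m\cong U\oplus V$ with $U,V\in\C$, then $U$ and $V$ are free. So $\X=\add\X$ in the paper's sense.
\item The section $k_1\to k\times k$ has no complement in $\C$. Hence it is not an inflation for any extriangulated structure on $\C$: otherwise Corollary~3.5 would split its $\E$-triangle and produce a complement.
\item Nevertheless $\piX k_1=0$, although $k_1\notin\X$.
\end{enumerate}
So the lemma, read literally, is false here. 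No argument using only the extriangulated axioms can close the step.

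Two repairs are available. The first is your fallback, but note it is an extra hypothesis rather than an argument: read $\add$-closure as closure under retracts, or assume $\C$ idempotent complete (for instance Krull--Schmidt). This is what the paper's one-line proof tacitly uses.

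The second makes your extriangulated idea correct once approximation triangles exist. In every application of the lemma in the paper, $\X$ is strongly covariantly or contravariantly finite. Take the left approximation $\E$-triangle $A\xrightarrow{x_A}X_A\to A^+\dashrightarrow$. Since $\id_A=gf$ and $f$ factors through the left approximation $x_A$, the inflation $x_A$ is a section. By Corollary~3.5 the triangle splits, so $X_A\cong A\oplus A^+$ in $\C$, and $A\in\X$ by closure under direct summands. Dually, take a right approximation $\E$-triangle $K\to X^A\xrightarrow{y_A}A\dashrightarrow$. Here $g$ factors through $y_A$, so $y_A$ is a retraction, and $X^A\cong K\oplus A$.
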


\begin{proof}
If $A\in\X$, then $\id_A$ factors through $\X$, so $\piX A$ is zero. Conversely, if $\piX A$ is zero, then $\id_A$ factors as $A\to X\to A$ with $X\in\X$. Hence $A$ is a direct summand of $X$ and belongs to $\X$.
\end{proof}

\begin{lemma}\label{lem:dimension-shifting}
Let $\C$ be a small $R$-linear extriangulated category. Assume that $\X$ is $n$-rigid, and let
\[
 A_j\xrightarrow{x_j}X_j\longrightarrow A_{j+1}\dashrightarrow,
 \hspace{2.5mm} X_j\in\X,
\]
be one of the left $\X$-approximation $\E$-triangles in \eqref{eq:tower}. Then, for every $X\in\X$, the following hold.
\begin{enumerate}[{\rm(1)}]
\item $\E(A_{j+1},X)=0$.
\item For $2\le r\le n$ there is a natural isomorphism
\[
        \E^r(A_{j+1},X)\cong \E^{r-1}(A_j,X).
\]
\item For $1\le r\le n-1$ there is a natural isomorphism
\[
        \E^r(X,A_{j+1})\cong \E^{r+1}(X,A_j).
\]
\end{enumerate}
Consequently,
\[
 \E^r(A_{j+1},X)=0
  \hspace{2mm}(0\le j\le n-1,\ 1\le r\le \min\{j+1,n\}),
\]
and, if $A_0\in\bigcap_{i=1}^n\X^{\perp_i}$, then
\[
        \E(X,A_j)=0 \hspace{2mm}(0\le j\le n-1).
\]
\end{lemma}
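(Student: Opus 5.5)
The plan is to apply the long exact sequences of Gorsky--Nakaoka--Palu \cite[Theorem 3.6]{GNP} to the fixed $\E$-triangle $A_j\xrightarrow{x_j}X_j\to A_{j+1}\dashrightarrow$. Then we combine the resulting sequences with the $n$-rigidity of $\X$, which gives $\E^i(X_j,X)=0$ and $\E^i(X,X_j)=0$ for $1\le i\le n$. Everything is then read off from short pieces of these sequences. Part (1) is the only place where the approximation property is used.

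For (1) and (2), apply $\C(-,X)$ and its positive extensions. This gives the exact sequence
\[
\C(X_j,X)\xrightarrow{\ \circ x_j\ }\C(A_j,X)\longrightarrow\E(A_{j+1},X)\longrightarrow\E(X_j,X)=0.
\]
Since $x_j$ is a left $\X$-approximation, the first map is surjective, so the connecting map is zero and hence $\E(A_{j+1},X)=0$. For $2\le r\le n$, the portion
\[
\E^{r-1}(X_j,X)\to\E^{r-1}(A_j,X)\to\E^r(A_{j+1},X)\to\E^r(X_j,X)
\]
has both outer terms zero by $n$-rigidity, since $r-1\ge1$ and $r\le n$. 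This yields the natural isomorphism in (2). For (3), apply $\C(X,-)$ and take the portion
\[
\E^r(X,X_j)\to\E^r(X,A_{j+1})\to\E^{r+1}(X,A_j)\to\E^{r+1}(X,X_j),
\]
valid for $1\le r\le n-1$. The outer terms vanish because $r+1\le n$, and this gives the isomorphism in (3). Naturality in $X$ is inherited from the naturality of the connecting morphisms in \cite{GNP}.

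For the first consequence, fix $j$ and $1\le r\le\min\{j+1,n\}$. Iterating (2) gives $\E^r(A_{j+1},X)\cong\E^{r-1}(A_j,X)\cong\cdots\cong\E^1(A_{j+2-r},X)$. Each step lowers both the index $r$ and the subscript by one, and the hypothesis $r\le n$ keeps every step within the range where (2) applies. The final index $j+2-r$ is at least $1$ because $r\le j+1$. The last group therefore vanishes by (1) applied to the triangle with index $j+1-r\ge0$.

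For the second consequence, assume $A_0\in\bigcap_{i=1}^n\X^{\perp_i}$. Iterating (3) gives $\E(X,A_j)\cong\E^2(X,A_{j-1})\cong\cdots\cong\E^{j+1}(X,A_0)$, using at each stage an exponent $r\le j\le n-1$. The last group is zero because $j+1\le n$. The case $j=0$ is the hypothesis itself.

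The only real subtlety is bookkeeping the index ranges so that each use of rigidity stays within $1\le i\le n$. A secondary point is making sure the long exact sequences of \cite{GNP} start with the $\C(-,-)$ terms, so that the approximation property can be used at the left end in (1); I expect this to hold by \cite[Theorem 3.6]{GNP}, where $\E^1=\E$ and $\E^0=\C$.
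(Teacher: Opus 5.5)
Your proposal is correct and follows the paper's proof essentially step for step. It uses the same four-term segments of the Gorsky--Nakaoka--Palu long exact sequences, with the approximation property used only at the $\C$-level end for (1), and the same iterations of (2) down to $\E(A_{j-r+2},X)=0$ and of (3) up to $\E^{j+1}(X,A_0)=0$; your index bookkeeping is, if anything, slightly more explicit than the paper's.
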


\begin{proof}
Fix $X\in\X$. Applying the long exact sequence of Gorsky--Nakaoka--Palu \cite[Theorem 3.6]{GNP} to the $\E$-triangle
\[
        A_j\xrightarrow{x_j}X_j\longrightarrow A_{j+1}\dashrightarrow
\]
gives the exact sequence
\[
 \C(X_j,X)\longrightarrow\C(A_j,X)
 \longrightarrow\E(A_{j+1},X)\longrightarrow\E(X_j,X).
\]
The first map is surjective because $x_j$ is a left $\X$-approximation, and the last term vanishes by rigidity. This proves (1).

For $2\le r\le n$, the same long exact sequence contains
\[
 \E^{r-1}(X_j,X)\longrightarrow \E^{r-1}(A_j,X)
 \longrightarrow \E^r(A_{j+1},X)
 \longrightarrow \E^r(X_j,X).
\]
Both outer terms vanish by $n$-rigidity, proving (2).

For $1\le r\le\min\{j+1,n\}$, iterate (2) through the preceding $\E$-triangles. The result terminates at the degree one vanishing in (1):
\[
 \E^r(A_{j+1},X)\cong\E(A_{j-r+2},X)=0.
\]

To prove \textup{(3)}, the long exact sequence contains, for $1\le r\le n-1$,
\[
 \E^r(X,X_j)\longrightarrow \E^r(X,A_{j+1})
 \longrightarrow \E^{r+1}(X,A_j)
 \longrightarrow \E^{r+1}(X,X_j).
\]
Again the outer terms vanish by $n$-rigidity, which proves (3).

If $A_0\in\bigcap_{i=1}^n\X^{\perp_i}$, then for $1\le j\le n-1$ repeated use of (3) gives
\[
        \E(X,A_j)\cong\E^{j+1}(X,A_0)=0,
\]
while the case $j=0$ is part of the assumption. Notice that the largest degree used here is $j+1\le n$ and vanishing in degree $n+1$ is not required.
\end{proof}

\begin{remark}
The small $R$-linear hypothesis is a convenient standard assumption ensuring that this calculus is available from Gorsky--Nakaoka--Palu \cite[Theorem 3.6]{GNP}.
\end{remark}

\section{Truncation and higher orthogonality}\label{sec:Truncation and higher orthogonality}

\subsection{One-sided truncation}
We establish the one-sided truncation mechanism underlying the recognition
theorem. Throughout this subsection, $(\C,\E,\mathfrak s)$ is a small
$R$-linear extriangulated category and $n\geq1$. Every subcategory denoted by
$\X=\add\X$ is closed under finite direct sums and direct summands. The
required approximation and $n$-rigidity assumptions are stated in each
result. We use the higher extension long exact sequences of
Gorsky--Nakaoka--Palu \cite[Theorem 3.6]{GNP}.

\begin{definition}\label{def:local-trunc}
For $m\geq0$, assume first that $\X$ is strongly covariantly finite and define
\[
 \operatorname{Tr}^{+}_{m}(\X)
 :=\{A\in\C\mid \SigX^{m}(\piX A)\cong0\}.
\]
Dually, when $\X$ is strongly contravariantly finite, define
\[
 \operatorname{Tr}^{-}_{m}(\X)
 :=\{A\in\C\mid \OmX^{m}(\piX A)\cong0\}.
\]
Here the zeroth power is the identity.  If $\X=\add\X$, then Lemma~\ref{lem:zero-object-quotient} gives
$\operatorname{Tr}^{+}_{0}(\X)=\operatorname{Tr}^{-}_{0}(\X)=\X$.
Since the canonical suspension and loop are additive, these loci are closed under finite direct sums and direct summands, and
\[
 \operatorname{Tr}^{\pm}_{m}(\X)\subseteq
 \operatorname{Tr}^{\pm}_{m+1}(\X)
 \hspace{2.5mm}(m\geq0).
\]
\end{definition}

\begin{proposition}\label{prop:left-to-sigma}
Let $\X=\add\X$ be strongly covariantly finite and $n$-rigid. If
\[
        \X=\bigcap_{i=1}^n{}^{\perp_i}\X,
\]
then
\[
        \SigX^n\cong\zero.
\]
\end{proposition}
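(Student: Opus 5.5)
Fix $A\in\C$ and build the tower \eqref{eq:tower} of fixed left approximation $\E$-triangles
\[
 A_j\xrightarrow{x_j}X_j\longrightarrow A_{j+1}\dashrightarrow,
 \hspace{2.5mm} 0\leq j\leq n-1,
\]
starting from $A_0=A$. By \eqref{eq:sigma-tower} we have $\SigX^n(\piX A)\cong\piX(A_n)$. The plan is to show $A_n\in\X$. Then Lemma~\ref{lem:zero-object-quotient} makes $\piX(A_n)$ a zero object, and Lemma~\ref{lem:objectwise-zero} upgrades this objectwise vanishing to $\SigX^n\cong\zero$.

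To see $A_n\in\X$, we use the hypothesis $\X=\bigcap_{i=1}^n{}^{\perp_i}\X$. It suffices to verify $\E^r(A_n,X)=0$ for every $X\in\X$ and every $1\leq r\leq n$. This is exactly the consequence recorded in Lemma~\ref{lem:dimension-shifting}: taking $j=n-1$ there gives
\[
 \E^r(A_{n},X)=0\qquad (1\leq r\leq \min\{n,n\}=n).
\]
Unwinding the argument, $r=1$ comes from part~(1), which uses that $x_{n-1}$ is a left approximation together with $\E(X_{n-1},X)=0$. For $2\leq r\leq n$, we shift with part~(2) down the tower: $\E^r(A_n,X)\cong\E^{r-1}(A_{n-1},X)\cong\cdots\cong\E(A_{n-r+1},X)$, and this last group vanishes by part~(1) applied to the triangle ending in $A_{n-r+1}$. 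This requires $n-r+1\geq1$, which holds. Hence $A_n\in\bigcap_{i=1}^n{}^{\perp_i}\X=\X$.

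There is no substantial obstacle here; the only point needing care is index bookkeeping. The shift from degree $r$ down to degree one needs $r-1$ preceding triangles, and the tower has exactly $n$ of them. All degrees used lie in $[1,n]$, so $n$-rigidity suffices and no vanishing of $\E^{n+1}$ is required. We also note that $\SigX$ is well defined only up to natural isomorphism, by Theorem~\ref{thm:known-onesided}. Vanishing on objects is insensitive to this choice, so the fixed choice of triangles is harmless.
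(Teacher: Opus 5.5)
Your proposal is correct and takes essentially the same route as the paper. You build the left approximation tower, use Lemma~\ref{lem:dimension-shifting} with $j=n-1$ to get $\E^r(A_n,X)=0$ for $1\le r\le n$, and deduce $A_n\in\X$ from the maximal orthogonality hypothesis; objectwise vanishing then passes to $\SigX^n\cong\zero$ via \eqref{eq:sigma-tower} and Lemma~\ref{lem:objectwise-zero}, and your index bookkeeping matches the paper's $\E^r(A_{j+1},X)\cong\E(A_{j-r+2},X)$.
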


\begin{proof}
Let $A\in\C$, set $A_0=A$, and consider the $\E$-triangle \eqref{eq:tower}. By Lemma~\ref{lem:dimension-shifting}, for every $X\in\X$,
\[
        \E^r(A_n,X)=0,
        \qquad 1\le r\le n.
\]
Hence
\[
        A_n\in\bigcap_{r=1}^n{}^{\perp_r}\X=\X.
\]
Equation \eqref{eq:sigma-tower} therefore gives
\[
        \SigX^n(\piX A)\cong\piX(A_n)\cong0
\]
for every $A\in\C$. By Lemma~\ref{lem:objectwise-zero}, $\SigX^n\cong\zero$.
\end{proof}

\begin{theorem}\label{thm:local-reconstruction}
Let $\X=\add\X$ be strongly covariantly finite and $n$-rigid. Then
\[
 \X
 =
 \operatorname{Tr}^{+}_{n}(\X)
 \cap
 \bigcap_{i=1}^{n}\X^{\perp_i}.
\]
Equivalently, for every $A\in\bigcap_{i=1}^{n}\X^{\perp_i}$,
\[
 \SigX^n(\piX A)\cong0
 \quad\Longleftrightarrow\quad
 A\in\X.
\]
\end{theorem}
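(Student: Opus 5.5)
The inclusion $\X\subseteq\operatorname{Tr}^{+}_{n}(\X)\cap\bigcap_{i=1}^{n}\X^{\perp_i}$ should be immediate, so I would dispose of it first. If $A\in\X$, then $n$-rigidity gives $\E^i(X,A)=0$ for all $X\in\X$ and $1\le i\le n$, so $A\in\bigcap_{i=1}^n\X^{\perp_i}$. Moreover $\piX A\cong0$ by Lemma~\ref{lem:zero-object-quotient}. Since $\SigX$ is additive, it preserves zero objects, so $\SigX^n(\piX A)\cong0$ and $A\in\operatorname{Tr}^{+}_{n}(\X)$. This also gives the direction ``$\Leftarrow$'' of the objectwise reformulation.

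For the reverse inclusion, I fix $A\in\bigcap_{i=1}^n\X^{\perp_i}$ with $\SigX^n(\piX A)\cong0$, and I use the tower \eqref{eq:tower} with $A_0=A$. By \eqref{eq:sigma-tower}, $\piX(A_n)\cong\SigX^n(\piX A)\cong0$. Hence $A_n\in\X$ by Lemma~\ref{lem:zero-object-quotient}.

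I would then run a descending induction on $j$, from $j=n$ down to $j=0$, proving that $A_j\in\X$. Suppose $A_{j+1}\in\X$ for some $0\le j\le n-1$, and consider the $\E$-triangle $A_j\xrightarrow{x_j}X_j\to A_{j+1}\overset{\delta_j}{\dashrightarrow}$. Its class satisfies $\delta_j\in\E(A_{j+1},A_j)$. The last clause of Lemma~\ref{lem:dimension-shifting} applies because $A_0\in\bigcap_{i=1}^n\X^{\perp_i}$, and it gives $\E(X,A_j)=0$ for every $X\in\X$ and $0\le j\le n-1$. Taking $X=A_{j+1}$ yields $\delta_j=0$.

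By Nakaoka--Palu \cite[Corollary 3.5]{NP}, the triangle therefore splits. So $x_j$ is a split monomorphism and $A_j$ is a direct summand of $X_j\in\X$. Since $\X=\add\X$, this gives $A_j\in\X$. At $j=0$ the induction yields $A=A_0\in\X$.

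The only step that needs care is getting the vanishing $\E(\X,A_j)=0$ at every stage of the tower without requiring any degree beyond $n$. Lemma~\ref{lem:dimension-shifting}(3) supplies exactly this, since it uses $\E^{j+1}(X,A_0)$ only for $j+1\le n$. Once that is in place, the splitting argument is routine.
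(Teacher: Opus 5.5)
Your proposal is correct and follows the paper's own proof essentially step for step. Both arguments check the easy inclusion via $n$-rigidity and zero objects. Both then use the tower \eqref{eq:tower} and Lemma~\ref{lem:zero-object-quotient} to get $A_n\in\X$, and the vanishing $\E(\X,A_j)=0$ from Lemma~\ref{lem:dimension-shifting}. Finally, a descending induction splits each triangle, which puts $A_j$ in $\X$ as a summand of $X_j$.
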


\begin{proof}
The inclusion from left to right follows from $n$-rigidity and the fact that every object of $\X$ is zero in $\C/[\X]$.

For the reverse inclusion, let
\[
 A=A_0\in\operatorname{Tr}^{+}_{n}(\X)
       \cap\bigcap_{i=1}^{n}\X^{\perp_i}
\]
and consider the $\E$-triangle \eqref{eq:tower}.  By \eqref{eq:sigma-tower}, the local truncation condition gives
\[
 \piX(A_n)\cong\SigX^n(\piX A)\cong0.
\]
Hence $A_n\in\X$ by Lemma~\ref{lem:zero-object-quotient}.  On the other hand, Lemma~\ref{lem:dimension-shifting} gives
\[
 \E(X,A_j)=0
  \hspace{2.5mm}
 (X\in\X,\ 0\leq j\leq n-1).
\]
We show by descending induction that $A_j\in\X$ for every $j$.  If $A_{j+1}\in\X$, then the extension class of
\[
 A_j\longrightarrow X_j\longrightarrow A_{j+1}\dashrightarrow
\]
lies in $\E(A_{j+1},A_j)=0$.  The $\E$-triangle therefore splits, so $A_j$ is a direct summand of $X_j\in\X$ and hence belongs to $\X$.  Starting from $A_n\in\X$ yields $A=A_0\in\X$.
\end{proof}

\begin{corollary}[Global truncation forces right maximal orthogonality]\label{prop:sigma-to-right}
Let $\X=\add\X$ be strongly covariantly finite and $n$-rigid. If
\[
 \SigX^n\cong\zero,
\]
then
\[
 \X=\bigcap_{i=1}^n\X^{\perp_i}.
\]
\end{corollary}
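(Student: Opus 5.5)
The plan is to deduce the corollary directly from Theorem~\ref{thm:local-reconstruction}, by turning the global hypothesis into the local one for every object.

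First, the inclusion $\X\subseteq\bigcap_{i=1}^n\X^{\perp_i}$ needs no truncation at all. If $A\in\X$, then $\E^i(X,A)=0$ for all $X\in\X$ and $1\le i\le n$ by $n$-rigidity (Definition~\ref{def:nrigid}).

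For the reverse inclusion, take $A\in\bigcap_{i=1}^n\X^{\perp_i}$.

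1. The hypothesis $\SigX^n\cong\zero$ says in particular that $\SigX^n(\piX A)$ is a zero object of $\C/[\X]$. Indeed, a natural isomorphism to the zero functor gives $\SigX^n(\piX A)\cong 0$ objectwise; this is the easy direction of Lemma~\ref{lem:objectwise-zero}.
2. Hence $A\in\operatorname{Tr}^{+}_{n}(\X)$, as in Definition~\ref{def:local-trunc}.
3. Therefore $A$ lies in $\operatorname{Tr}^{+}_{n}(\X)\cap\bigcap_{i=1}^{n}\X^{\perp_i}$, which equals $\X$ by Theorem~\ref{thm:local-reconstruction}.

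The standing hypotheses of that theorem are exactly the ones assumed here: $\X=\add\X$ is strongly covariantly finite and $n$-rigid, and $\C$ is small and $R$-linear, as fixed throughout the subsection.

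There is no real obstacle. The only point to check is that the suspension $\SigX$ appearing in the hypothesis is the same canonical suspension used to define $\operatorname{Tr}^{+}_{n}(\X)$. It is: both come from the one-sided structure of Theorem~\ref{thm:known-onesided}(1), and that structure is well defined up to natural isomorphism, which does not affect whether an object vanishes.

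If one wanted to avoid citing the theorem, the argument could be unwound as follows. Build the tower \eqref{eq:tower} starting from $A_0=A$. Then $A_n\in\X$ by \eqref{eq:sigma-tower} together with Lemma~\ref{lem:zero-object-quotient}. Lemma~\ref{lem:dimension-shifting} gives $\E(X,A_j)=0$ for $X\in\X$ and $0\le j\le n-1$. Each triangle $A_j\to X_j\to A_{j+1}\dashrightarrow$ therefore splits once $A_{j+1}\in\X$, and descending induction on $j$ yields $A\in\X$.
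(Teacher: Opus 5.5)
Your proposal is correct and follows the paper's proof. The paper likewise observes that $\SigX^n\cong\zero$ gives $\operatorname{Tr}^{+}_{n}(\X)=\C$ and then reads off the equality from Theorem~\ref{thm:local-reconstruction}, and your unwound version (tower, $A_n\in\X$, vanishing of $\E(\X,A_j)$, descending splitting) is exactly the paper's proof of that theorem.
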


\begin{proof}
Global truncation means that
\[
\operatorname{Tr}^{+}_{n}(\X)=\C.
\]
Hence every object of $\C$ satisfies the local truncation condition appearing
in Theorem~\ref{thm:local-reconstruction}. The conclusion therefore follows
directly from that theorem.
\end{proof}

\begin{theorem}\label{thm:local-reconstruction-dual}
Let $\X=\add\X$ be strongly contravariantly finite and $n$-rigid. Then
\[
 \X
 =
 \operatorname{Tr}^{-}_{n}(\X)
 \cap
 \bigcap_{i=1}^{n}{}^{\perp_i}\X.
\]
Equivalently, for every $A\in\bigcap_{i=1}^{n}{}^{\perp_i}\X$,
\[
 \OmX^n(\piX A)\cong0
 \quad\Longleftrightarrow\quad
 A\in\X.
\]
\end{theorem}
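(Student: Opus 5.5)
The plan is to dualize the proof of Theorem~\ref{thm:local-reconstruction}. We can either apply that theorem to the opposite extriangulated category $\C^{\op}$, or rerun the argument directly with the right approximation tower \eqref{eq:right-tower}. I will write out the direct argument, since the dual version of Lemma~\ref{lem:dimension-shifting} is needed in any case.

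For the inclusion from left to right, take $A\in\X$. Then $\piX A\cong0$, so $\OmX^n(\piX A)\cong0$ by additivity, and $A\in\operatorname{Tr}^{-}_{n}(\X)$. By $n$-rigidity, $\E^i(A,X)=0$ for all $X\in\X$ and $1\le i\le n$, so $A\in\bigcap_{i=1}^n{}^{\perp_i}\X$.

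For the reverse inclusion, let $A=B_0$ lie in $\operatorname{Tr}^{-}_{n}(\X)\cap\bigcap_{i=1}^{n}{}^{\perp_i}\X$, and build the tower \eqref{eq:right-tower}.
\begin{enumerate}[{\rm(i)}]
\item By \eqref{eq:omega-tower}, $\piX(B_n)\cong\OmX^n(\piX A)\cong0$, so $B_n\in\X$ by Lemma~\ref{lem:zero-object-quotient}.
\item We need the dual of Lemma~\ref{lem:dimension-shifting}(3): for $X\in\X$ and $1\le r\le n-1$,
\[
\E^r(B_{j+1},X)\cong\E^{r+1}(B_j,X).
\]
This comes from the contravariant long exact sequence of Gorsky--Nakaoka--Palu \cite[Theorem 3.6]{GNP} applied to $B_{j+1}\to X^j\to B_j\dashrightarrow$, namely
\[
\E^r(X^j,X)\to\E^r(B_{j+1},X)\to\E^{r+1}(B_j,X)\to\E^{r+1}(X^j,X).
\]
Both outer terms vanish by $n$-rigidity, since $r+1\le n$.
\item Iterating this isomorphism gives $\E(B_j,X)\cong\E^{j+1}(B_0,X)=0$ for $1\le j\le n-1$; the case $j=0$ is the hypothesis. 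Only degrees up to $n$ are used.
\end{enumerate}

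We then argue by descending induction on $j$. Suppose $B_j\in\X$ for some $1\le j\le n$. The extension class of $B_j\to X^{j-1}\to B_{j-1}\dashrightarrow$ lies in $\E(B_{j-1},B_j)$, which is zero by step (iii) with $X=B_j$. So the $\E$-triangle splits by Nakaoka--Palu \cite[Corollary 3.5]{NP}. Hence $B_{j-1}$ is a direct summand of $X^{j-1}$, and $B_{j-1}\in\X=\add\X$. Starting from $B_n\in\X$, this reaches $B_0=A\in\X$.

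The only real point of care is the indexing in step (ii)–(iii): the direction of the dimension shift must be checked in the contravariant variable, and we must confirm that $\E^{n+1}$ never appears. That last point is exactly why the descent stops at $\E(B_{n-1},-)$. The "Equivalently" formulation then follows immediately, because membership in $\bigcap_{i=1}^{n}{}^{\perp_i}\X$ is assumed in it.
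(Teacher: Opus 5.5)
Your proposal is correct and follows the paper's proof essentially step for step. You obtain $B_n\in\X$ from the local truncation via \eqref{eq:omega-tower} and Lemma~\ref{lem:zero-object-quotient}. You use the contravariant dimension shift $\E(B_j,X)\cong\E^{j+1}(B_0,X)=0$ for $0\le j\le n-1$, which involves only degrees at most $n$. You then conclude by descending induction through the split right approximation $\E$-triangles.
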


\begin{proof}
The inclusion from left to right follows from $n$-rigidity and the fact that
objects of $\X$ are zero in the quotient. Conversely, let
\[
 A=B_0\in\operatorname{Tr}^{-}_{n}(\X)
       \cap\bigcap_{i=1}^{n}{}^{\perp_i}\X
\]
and use the right approximation $\E$-triangle \eqref{eq:right-tower}. Equation
\eqref{eq:omega-tower} and Lemma~\ref{lem:zero-object-quotient} give
$B_n\in\X$.

Fix $X\in\X$. The long exact sequences of Gorsky--Nakaoka--Palu
\cite[Theorem 3.6]{GNP}, applied to \eqref{eq:right-tower}, give
\[
 \E(B_j,X)\cong\E^{j+1}(B_0,X)=0
 \hspace{2.5mm}(0\leq j\leq n-1).
\]
Suppose inductively that $B_{j+1}\in\X$. The extension class of
\[
 B_{j+1}\longrightarrow X^j\longrightarrow B_j\dashrightarrow
\]
belongs to $\E(B_j,B_{j+1})=0$. Hence the triangle splits and $B_j$ is a
direct summand of $X^j$. Since $\X=\add\X$, it follows that $B_j\in\X$.
Descending induction from $B_n\in\X$ yields $A=B_0\in\X$.
\end{proof}

\begin{proposition}\label{prop:dual}
Let $\X=\add\X$ be strongly contravariantly finite and $n$-rigid.
\begin{enumerate}[{\rm(1)}]
\item If $\displaystyle \X=\bigcap_{i=1}^n\X^{\perp_i}$, then $\OmX^n\cong\zero$.
\item If $\OmX^n\cong\zero$, then $\displaystyle \X=\bigcap_{i=1}^n{}^{\perp_i}\X$.
\end{enumerate}
\end{proposition}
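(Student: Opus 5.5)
Both parts are the duals of results already proved for the suspension, so the plan is to run those arguments along the right approximation tower \eqref{eq:right-tower} instead of the left tower \eqref{eq:tower}.

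\emph{Part (1).} This is dual to Proposition~\ref{prop:left-to-sigma}. Let $A\in\C$, set $B_0=A$, and form the tower \eqref{eq:right-tower} using right $\X$-approximation $\E$-triangles
\[
B_{j+1}\longrightarrow X^j\xrightarrow{y_j}B_j\dashrightarrow .
\]
Fix $X\in\X$ and apply the long exact sequence in the second variable of Gorsky--Nakaoka--Palu \cite[Theorem 3.6]{GNP}. This gives
\[
\C(X,X^j)\to\C(X,B_j)\to\E(X,B_{j+1})\to\E(X,X^j).
\]
The first map is surjective because $y_j$ is a right approximation, and the last term vanishes by rigidity, so $\E(X,B_{j+1})=0$. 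For $2\le r\le n$, the outer terms $\E^{r-1}(X,X^j)$ and $\E^r(X,X^j)$ vanish by $n$-rigidity, giving $\E^r(X,B_{j+1})\cong\E^{r-1}(X,B_j)$.

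Iterating this exactly as in the last part of Lemma~\ref{lem:dimension-shifting} gives $\E^r(X,B_n)\cong\E(X,B_{n-r+1})=0$ for $1\le r\le n$. Hence $B_n\in\bigcap_{i=1}^n\X^{\perp_i}=\X$. Then \eqref{eq:omega-tower} gives $\OmX^n(\piX A)\cong\piX(B_n)\cong0$ for every $A$, and Lemma~\ref{lem:objectwise-zero} yields $\OmX^n\cong\zero$.

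\emph{Part (2).} This is dual to Corollary~\ref{prop:sigma-to-right} and is immediate from Theorem~\ref{thm:local-reconstruction-dual}. If $\OmX^n\cong\zero$, then $\operatorname{Tr}^{-}_n(\X)=\C$, so that theorem gives
\[
\X=\C\cap\bigcap_{i=1}^n{}^{\perp_i}\X=\bigcap_{i=1}^n{}^{\perp_i}\X .
\]

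\emph{Main obstacle.} The only step needing care is the dimension shifting in part (1), where the indices must be handled correctly. Only degrees $r\le n$ of $\E^r(\X,\X)$ are used, so $n$-rigidity suffices and no vanishing in degree $n+1$ is needed.
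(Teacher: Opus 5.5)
Your proof is correct and follows the paper's route. Part (1) is the dual of Proposition~\ref{prop:left-to-sigma}: the paper only cites ``the dual argument'', while you write out the dimension shifting $\E^r(X,B_{j+1})\cong\E^{r-1}(X,B_j)$ along the right approximation tower, with correct indices. Part (2) is obtained exactly as in the paper, by applying Theorem~\ref{thm:local-reconstruction-dual} with $\operatorname{Tr}^{-}_{n}(\X)=\C$.
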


\begin{proof}
The first assertion follows by the dual argument of
Proposition~\ref{prop:left-to-sigma}. For the second assertion, global
truncation means that
$
\operatorname{Tr}^{-}_{n}(\X)=\C.
$
Thus every object of $\C$ satisfies the local truncation condition required in
Theorem~\ref{thm:local-reconstruction-dual}. Applying that theorem to each
object of $\C$ gives the desired conclusion.
\end{proof}

\begin{theorem}\label{thm:one-sided}
Let $(\C,\E,\mathfrak s)$ be a small $R$-linear extriangulated category.
\begin{enumerate}[{\rm(1)}]
\item If $\X=\add\X$ is strongly covariantly finite and $n$-rigid, then
\[
 \X=\bigcap_{i=1}^{n}{}^{\perp_i}\X
 \quad\Longrightarrow\quad
 \SigX^n\cong\zero
 \quad\Longrightarrow\quad
 \X=\bigcap_{i=1}^{n}\X^{\perp_i}.
\]
\item If $\X=\add\X$ is strongly contravariantly finite and $n$-rigid, then
\[
 \X=\bigcap_{i=1}^{n}\X^{\perp_i}
 \quad\Longrightarrow\quad
 \OmX^n\cong\zero
 \quad\Longrightarrow\quad
 \X=\bigcap_{i=1}^{n}{}^{\perp_i}\X.
\]
\end{enumerate}
\end{theorem}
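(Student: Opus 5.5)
This theorem only packages results already proved above, so I will assemble it from them rather than argue afresh. For part~(1), let $\X=\add\X$ be strongly covariantly finite and $n$-rigid. The first implication, from $\X=\bigcap_{i=1}^{n}{}^{\perp_i}\X$ to $\SigX^n\cong\zero$, is exactly Proposition~\ref{prop:left-to-sigma}. Its proof runs the left approximation tower \eqref{eq:tower} for $n$ steps. Lemma~\ref{lem:dimension-shifting} then shows $\E^r(A_n,\X)=0$ for $1\le r\le n$, so $A_n\in\X$ by the hypothesis. Finally \eqref{eq:sigma-tower} together with Lemma~\ref{lem:objectwise-zero} gives the vanishing of $\SigX^n$.

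The second implication, from $\SigX^n\cong\zero$ to $\X=\bigcap_{i=1}^{n}\X^{\perp_i}$, is Corollary~\ref{prop:sigma-to-right}. I would derive it from the local reconstruction Theorem~\ref{thm:local-reconstruction} as follows.
\begin{itemize}
\item Global vanishing of $\SigX^n$ means $\operatorname{Tr}^{+}_{n}(\X)=\C$.
\item The intersection formula of Theorem~\ref{thm:local-reconstruction} therefore reduces to $\X=\bigcap_{i=1}^{n}\X^{\perp_i}$.
\item The inclusion $\X\subseteq\bigcap_{i=1}^{n}\X^{\perp_i}$ holds by $n$-rigidity.
\item The reverse inclusion uses descending splitting along the tower. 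Here Lemma~\ref{lem:dimension-shifting}(3) supplies $\E(\X,A_j)=0$ for $0\le j\le n-1$.
\end{itemize}

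Part~(2) is the formal dual, applied in the opposite extriangulated category, where right approximation $\E$-triangles become left ones. Concretely, the first implication of part~(2) is Proposition~\ref{prop:dual}(1), and the second is Proposition~\ref{prop:dual}(2). The latter in turn rests on Theorem~\ref{thm:local-reconstruction-dual}.

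Since every ingredient is already established, no step is a real obstacle. The only point needing care is the duality in part~(2). One must check that passing to $\C^{\op}$ preserves the hypotheses: $n$-rigidity is preserved because the positive extensions satisfy $\E^i_{\C^{\op}}(A,B)=\E^i_{\C}(B,A)$. Strong contravariant finiteness becomes strong covariant finiteness, and the orthogonality classes ${}^{\perp_i}\X$ and $\X^{\perp_i}$ exchange roles. One must also check that the canonical loop $\OmX$ on $\C/[\X]$ corresponds to the canonical suspension on $\C^{\op}/[\X^{\op}]$, which is how Theorem~\ref{thm:known-onesided}(2) is obtained. Once these identifications are noted, part~(2) follows verbatim from part~(1).
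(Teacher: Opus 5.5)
Your proposal is correct and matches the paper's proof: part (1) is Proposition~\ref{prop:left-to-sigma} combined with Corollary~\ref{prop:sigma-to-right}, and part (2) is Proposition~\ref{prop:dual}. Your sketches of the underlying arguments (the left approximation tower, the descending splitting, and the duality bookkeeping) agree with how those results are proved in the paper.
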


\begin{proof}
Part \textup{(1)} is Propositions~\ref{prop:left-to-sigma} and~\ref{prop:sigma-to-right}. Part \textup{(2)} is Proposition~\ref{prop:dual}.
\end{proof}

\begin{remark}\label{rem:crossed}
The global crossed implication
\[
 \X=\bigcap_{i=1}^n{}^{\perp_i}\X
 \Longrightarrow
 \SigX^n\cong0
 \Longrightarrow
 \X=\bigcap_{i=1}^n\X^{\perp_i}
\]
is the categorical shadow of the stronger local statement in
Theorem~\ref{thm:local-reconstruction}. On the locus defined by right higher
orthogonality, the $n$th canonical suspension detects precisely the objects
already killed by the quotient. The loop functor gives the dual detection
statement on the locus defined by left higher orthogonality.
\end{remark}

\subsection{Recognition of higher maximal orthogonality}\label{sec:recognition}

\begin{theorem}\label{thm:recognition}
Let $(\C,\E,\mathfrak s)$ be a small $R$-linear extriangulated category and $\X=\add\X$ be a strongly functorially finite $n$-rigid subcategory. Then the following are equivalent:
\begin{enumerate}[{\rm(1)}]
\item $\X$ satisfies the two-sided maximal $n$-orthogonality condition of {\rm Definition~\ref{def:maxorth}}.
\item $\SigX^n\cong\zero$ on $\C/[\X]$.
\item $\OmX^n\cong\zero$ on $\C/[\X]$.
\end{enumerate}
\end{theorem}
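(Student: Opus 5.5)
The plan is to assemble the equivalence from the one-sided results already established, with no new homological computation. Throughout, $\X=\add\X$ is strongly functorially finite and $n$-rigid. Hence both parts of Theorem~\ref{thm:one-sided} apply, and by Theorem~\ref{thm:known-pretri} the suspension $\SigX$ and loop $\OmX$ are the adjoint pair of the canonical pretriangulated structure.

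\textbf{(2)$\Leftrightarrow$(3).} This is Corollary~\ref{cor:sigma-omega}, which rests on the adjunction $\SigX\dashv\OmX$ and Lemma~\ref{lem:adjoint-zero}. It is the only step where the two approximation directions interact. It is what allows one-sided information about $\SigX$ to be converted into information about $\OmX$, and conversely.

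\textbf{(1)$\Rightarrow$(2).} Assuming (1), the left equality $\X=\bigcap_{i=1}^n{}^{\perp_i}\X$ holds. Proposition~\ref{prop:left-to-sigma}, which needs only strong covariant finiteness, then gives $\SigX^n\cong\zero$. (By symmetry, the right equality together with Proposition~\ref{prop:dual}(1) gives $\OmX^n\cong\zero$ directly, but this is not needed.)

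\textbf{(2)$\Rightarrow$(1).} Assume $\SigX^n\cong\zero$. We obtain the two orthogonality equalities separately.
\begin{enumerate}[{\rm(a)}]
\item Corollary~\ref{prop:sigma-to-right} gives $\X=\bigcap_{i=1}^n\X^{\perp_i}$.
\item By (2)$\Leftrightarrow$(3) we also have $\OmX^n\cong\zero$. Proposition~\ref{prop:dual}(2) then gives $\X=\bigcap_{i=1}^n{}^{\perp_i}\X$.
\end{enumerate}
Together (a) and (b) are exactly Definition~\ref{def:maxorth}, which proves (1).

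No step is a genuine obstacle, since the substantive work is already contained in Theorems~\ref{thm:local-reconstruction} and~\ref{thm:local-reconstruction-dual}. The one point to check is that the functors in Theorem~\ref{thm:one-sided} coincide with those of the pretriangulated structure. Section~\ref{sec:canonical} records that the one-sided $\SigX$ agrees with the pretriangulated suspension, and the same holds dually for $\OmX$. Consequently the adjunction of Theorem~\ref{thm:known-pretri} applies to the same functors appearing in the one-sided statements.
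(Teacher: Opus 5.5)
Your proposal is correct and coincides with the paper's own proof step for step: (1)$\Rightarrow$(2) via Proposition~\ref{prop:left-to-sigma}, (2)$\Leftrightarrow$(3) via Corollary~\ref{cor:sigma-omega}, and (2)$\Rightarrow$(1) by getting the right equality from Corollary~\ref{prop:sigma-to-right} and the left equality from Proposition~\ref{prop:dual}(2) after transferring the vanishing to $\OmX^n$. Your check that the one-sided functors agree with those of the pretriangulated structure is the same compatibility the paper records in Section~\ref{sec:canonical}.
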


\begin{proof}
Assume (1). Then $\X=\bigcap_{i=1}^n{}^{\perp_i}\X$, and Proposition~\ref{prop:left-to-sigma} yields (2). The equivalence of (2) and (3) follows from Corollary~\ref{cor:sigma-omega}.

Conversely, assume (2). By Corollary~\ref{prop:sigma-to-right},
\[
        \X=\bigcap_{i=1}^n\X^{\perp_i}.
\]
By Corollary~\ref{cor:sigma-omega}, condition (3) holds as well. Hence Proposition~\ref{prop:dual}(2) yields
\[
        \X=\bigcap_{i=1}^n{}^{\perp_i}\X.
\]
The two equalities together are precisely condition \textup{(1)}.
\end{proof}

\begin{remark}\label{rem:hypotheses}
The following observations separate the rigidity and approximation assumptions in Theorem~\ref{thm:recognition}.
\begin{enumerate}[{\rm(1)}]
	\item The $n$-rigidity hypothesis cannot be omitted.
	
	Let $\Lambda=k[\varepsilon]/(\varepsilon^2)$ and $\C=\mathrm{mod}\text{-}\Lambda$ with its usual exact structure. Put $\X=\C$. Then $\C/[\X]=0$, so every power of the canonical suspension and loop is zero. However $\X$ is not rigid. For the simple module $S=\Lambda/(\varepsilon)$, the following nonsplit sequence
		\[
		0\longrightarrow S\longrightarrow\Lambda\longrightarrow S\longrightarrow0
		\]
		gives $\operatorname{Ext}^1_\Lambda(S,S)\neq0$. This is a nonsplit exact
		example that is extriangulated but not triangulated. It shows that
		nilpotency of the canonical suspension and loop alone cannot recognize cluster tilting.
	\item The two approximation directions are independent data.
		
	Truncation does not by itself recover missing approximation theory. Approximation finiteness is an independent part of higher cluster tilting theory in infinite type. Holm--J\o rgensen \cite[Theorems D and E]{HJ} distinguish weakly $d$ cluster tilting subcategories from $d$ cluster tilting subcategories by separate left and right approximation conditions. In their $A_\infty$ model, weakly $d$ cluster tilting subcategories need not be functorially finite. Holm--J\o rgensen \cite[Propositions 3.4 and 3.5]{HJ} characterize the two approximation properties separately. Thus maximal higher orthogonality by itself does not supply approximation finiteness.
		
	Accordingly, Theorem~\ref{thm:one-sided} gives the natural one-sided statement. Under strong covariant finiteness, the condition $\SigX^n\cong\zero$ transfers maximal orthogonality from the left to the right. The dual statement transfers it in the opposite direction. The proof of the two-sided recognition theorem uses both approximation directions. This observation explains the formulation of the theorem. It is not asserted here as a counterexample showing that no weaker two-sided hypothesis can exist.
\end{enumerate}	
\end{remark}

\begin{corollary}\label{cor:classical}
Assume in addition that $\C$ has enough projectives and enough injectives. Then, for a strongly functorially finite $n$-rigid subcategory $\X=\add\X$, the following are equivalent.
\begin{enumerate}[{\rm(1)}]
	\item
	$\X\text{ is }(n+1)\text{-cluster tilting}$.
	\item $(\X,\X)$ is an $n$-cotorsion pair in the sense of He--Zhou \cite[Definition 2.3]{HZ}.
	\item $\SigX^n\cong\zero$.
	\item $\OmX^n\cong\zero$.
\end{enumerate}
\end{corollary}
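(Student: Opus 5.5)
The plan is to reduce Corollary~\ref{cor:classical} to Theorem~\ref{thm:recognition}, using the identifications of the positive extensions recalled in Section~\ref{sec:prelim}. Since $\X$ is strongly functorially finite and $n$-rigid, Theorem~\ref{thm:recognition} already gives the equivalence of (3) and (4) with the two-sided maximal $n$-orthogonality condition of Definition~\ref{def:maxorth}, stated for the positive extensions $\E^i$. What remains is to match that condition with (1) and with (2).

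For (1), I will use Gorsky--Nakaoka--Palu \cite[Corollary 3.23]{GNP}: with enough projectives and injectives, $\E^i$ agrees with the usual higher extensions defined by resolutions. Hence the classes ${}^{\perp_i}\X$ and $\X^{\perp_i}$ are the same in both senses. Definition~\ref{def:maxorth} together with functorial finiteness is then exactly $(n+1)$-cluster tilting in the sense of Herschend--Liu--Nakaoka \cite[Definition 3.21]{HLN2}.

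There is one hypothesis to reconcile. The corollary assumes strong functorial finiteness, whereas cluster tilting asks only for ordinary functorial finiteness. Strong implies ordinary trivially, so (1) follows from Definition~\ref{def:maxorth} plus our hypotheses. For the converse direction, strong functorial finiteness is already assumed, so nothing extra is needed; one may still note Zhou--Zhu \cite[Remark 2.9]{ZZct} for consistency. Thus (1) is equivalent to the condition of Definition~\ref{def:maxorth}, and hence to (3) and (4).

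For (2), I will invoke He--Zhou \cite[Definition 2.3 and Theorem 3.5]{HZ}. Their result says that, in an extriangulated category with enough projectives and injectives, $\X$ is $(n+1)$-cluster tilting if and only if $(\X,\X)$ is an $n$-cotorsion pair. This gives (1)$\Leftrightarrow$(2) directly.

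The main obstacle is this last step: checking that the conventions of He--Zhou match ours. Specifically, their indexing ($n$-cotorsion pair versus $(n+1)$-cluster tilting), their extension groups (the usual higher extensions, identified here with $\E^i$ via \cite{GNP}), and any generating/cogenerating or functorial finiteness requirements must line up. If their theorem needs $\X$ to contain the projectives and injectives, I would deduce this from the orthogonality equalities. A projective $P$ satisfies $\E^i(P,\X)=0$ for all $i$, so $P\in\bigcap_i{}^{\perp_i}\X=\X$; dually every injective lies in $\X$. Once the conventions are aligned, the corollary follows by chaining these equivalences.
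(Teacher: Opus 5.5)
Your proposal is correct and follows essentially the same route as the paper: Theorem~\ref{thm:recognition} gives (3)$\Leftrightarrow$(4)$\Leftrightarrow$ two-sided maximal $n$-orthogonality, the Gorsky--Nakaoka--Palu comparison identifies this with $(n+1)$-cluster tilting, and He--Zhou's Theorem 3.5 gives (1)$\Leftrightarrow$(2). Your additional checks (that strong functorial finiteness implies ordinary functorial finiteness, and that projectives and injectives lie in $\X$ because $\E^i=\E^{\diamond i}$) are the same reconciliations the paper records in Section~\ref{sec:prelim}, so nothing is missing.
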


\proof This follows from Theorem~\ref{thm:recognition}, the characterization of He--Zhou \cite[Definition 2.3 and Theorem 3.5]{HZ}, and the comparison theorem of Gorsky--Nakaoka--Palu \cite[Corollary 3.23]{GNP}.  \qed

\begin{corollary}\label{cor:triangulated-ntruncated}
Let $\T$ be a triangulated category and $\X=\add\X$ be a functorially finite subcategory satisfying
\[
 \T(X,X'[i])=0
\hspace{2.5mm}
 (X,X'\in\X,\ 1\leq i\leq n).
\]
Equip $\T/[\X]$ with its canonical pretriangulated structure.  Then the following are equivalent:
\begin{enumerate}[{\rm(1)}]
\item $\X$ is an $(n+1)$ cluster tilting subcategory of $\T$.
\item $\T/[\X]$ is $n$-truncated in the sense of Mochizuki--Nakaoka--Ogawa \cite[Definition 1.2.3]{MNO}.
\item $\SigX^n\cong\zero$.
\item $\OmX^n\cong\zero$.
\end{enumerate}
Moreover, for every $A\in\T$ one has the objectwise reconstruction formulas
\[
 A\in\X
 \quad\Longleftrightarrow\quad
 \left\{
 \begin{array}{l}
 \T(\X,A[i])=0\quad(1\leq i\leq n),\\
 \SigX^n(\piX A)\cong0,
 \end{array}
 \right.
\]
and
\[
 A\in\X
 \quad\Longleftrightarrow\quad
 \left\{
 \begin{array}{l}
 \T(A,\X[i])=0\quad(1\leq i\leq n),\\
 \OmX^n(\piX A)\cong0.
 \end{array}
 \right.
\]
\end{corollary}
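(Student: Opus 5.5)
The plan is to reduce the corollary to Theorem~\ref{thm:recognition} and to Theorems~\ref{thm:local-reconstruction} and~\ref{thm:local-reconstruction-dual}, by specializing the extriangulated setting to $\T$. First I regard $\T$ as an extriangulated category with $\E(C,A)=\T(C,A[1])$. Since the recognition theorem is stated for small $R$-linear categories, I pass to a skeleton (as allowed in Section~\ref{sec:prelim}) and take $R=\mathbb Z$. A triangulated category has enough projectives and injectives, namely the zero objects, and every object is projective-resolved by $0\to A[-1]\to\dots$. So Gorsky--Nakaoka--Palu \cite[Corollary 3.23]{GNP} identifies $\E^i(C,A)\cong\T(C,A[i])$ for all $i\ge1$. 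This can also be seen directly: the higher extensions are computed by dimension shifting along the triangles $A\to0\to A[1]\to$, and these agree with $\T(-,-[i])$. Under this identification, the hypothesis $\T(X,X'[i])=0$ for $1\le i\le n$ is exactly $n$-rigidity, the classes $\X^{\perp_i}$ and ${}^{\perp_i}\X$ become the usual ones, and Definition~\ref{def:maxorth} together with functorial finiteness becomes the $(n+1)$ cluster tilting condition.

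Second, by Zhou--Zhu \cite[Remark 3.20]{ZZtri}, functorial finiteness in $\T$ coincides with strong functorial finiteness. In practice, a left $\X$-approximation $A\to X_A$ completes to a triangle $A\to X_A\to A^+\to A[1]$, which is an $\E$-triangle. Hence Theorems~\ref{thm:known-onesided} and~\ref{thm:known-pretri} provide the canonical pretriangulated structure on $\T/[\X]$, with $\SigX$ and $\OmX$ adjoint. Condition (2) is by definition \cite[Definition 1.2.3]{MNO} the statement $\SigX^n\cong\zero$, equivalently $\OmX^n\cong\zero$, so (2)$\Leftrightarrow$(3) by definition. The equivalence (3)$\Leftrightarrow$(4) is Corollary~\ref{cor:sigma-omega}. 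The equivalence (1)$\Leftrightarrow$(3) is Theorem~\ref{thm:recognition}, translated through the identification of the first step.

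Third, the objectwise formulas are Theorem~\ref{thm:local-reconstruction} and Theorem~\ref{thm:local-reconstruction-dual}, rewritten with $\E^i(X,A)=\T(X,A[i])$ and $\E^i(A,X)=\T(A,X[i])$. The forward directions hold because objects of $\X$ become zero in the quotient and satisfy the orthogonality by rigidity.

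The main obstacle is the careful justification of $\E^i\cong\T(-,-[i])$ in the Gorsky--Nakaoka--Palu sense, including the smallness reduction. If the citation to \cite[Corollary 3.23]{GNP} feels too indirect, my fallback is to note that only degrees $1\le i\le n$ enter the argument. In those degrees the long exact sequences used in Lemma~\ref{lem:dimension-shifting} are the standard long exact $\Hom$-sequences of triangles, so every proof step can be rerun verbatim with $\T(-,-[i])$.
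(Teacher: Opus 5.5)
Your argument is essentially the paper's: identify $\E^i(-,-)$ with $\T(-,-[i])$, use Zhou--Zhu's remark that functorial finiteness equals strong functorial finiteness in $\T$, obtain (2)$\Leftrightarrow$(3) from Mochizuki--Nakaoka--Ogawa's definition, and read off the remaining equivalences and the objectwise formulas from Theorems~\ref{thm:recognition}, \ref{thm:local-reconstruction} and~\ref{thm:local-reconstruction-dual} together with Corollary~\ref{cor:sigma-omega}. One correction: passing to a skeleton yields a small category only when $\T$ is essentially small, which the statement does not assume. Your ``fallback'' of rerunning the dimension-shifting argument with the ordinary long exact $\Hom$-sequences of triangles is therefore the necessary step, not an optional one, and it is exactly how the paper removes the smallness hypothesis.
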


\begin{proof}
The higher extensions are the shifted Hom groups, and the standard long exact Hom sequences give exactly the dimension-shifting argument used in Theorems~\ref{thm:local-reconstruction}, \ref{thm:local-reconstruction-dual} and~\ref{thm:recognition}. Therefore no smallness assumption is needed. Zhou--Zhu \cite[Remark 3.20]{ZZtri} show that ordinary functorial finiteness agrees with strong functorial finiteness in a triangulated category. Hence \textup{(1)}, \textup{(3)}, and \textup{(4)} are equivalent. Mochizuki--Nakaoka--Ogawa \cite[Definition 1.2.3]{MNO} define $n$-truncatedness by the vanishing of the $n$th suspension, equivalently the $n$th loop. Thus \textup{(2)} is equivalent to \textup{(3)} and \textup{(4)}. The final two formulas are the triangulated specializations of Theorems~\ref{thm:local-reconstruction} and~\ref{thm:local-reconstruction-dual}.
\end{proof}

\begin{remark}\label{rem:MNO-converse}
Mochizuki--Nakaoka--Ogawa
\cite[Example 2.1.5(2), Definition 2.3.1 and Proposition 2.3.23]{MNO}
specialize their heart construction to the ideal quotient $\T/[\X]$ by an
$(n+1)$ cluster tilting subcategory and prove that the resulting
pretriangulated category is $n$-truncated. In this specialization, the suspension functor in their Definition 2.3.1 is
induced by the same left $\X$-approximation triangles as the canonical
quotient suspension used here. Hence the two suspension functors agree up to
natural isomorphism. The implication
\textup{(2)}$\Rightarrow$\textup{(1)} in
Corollary~\ref{cor:triangulated-ntruncated} runs in the opposite direction.
For the canonical ideal quotient by a functorially finite $n$-rigid
subcategory, $n$-truncatedness forces higher cluster tilting. The objectwise
formulas in Theorems~\ref{thm:local-reconstruction} and~\ref{thm:local-reconstruction-dual} are stronger because they require truncation only at the object
under consideration.
\end{remark}

We give two elementary computations that separate local from global truncation
and show that the exponent in the main theorem is optimal. Let $k$ be a field.
For $N\geq2$, let $\mathcal S_N$ be the semisimple $k$-linear category with
indecomposable objects $S_0,\ldots,S_{N-1}$, with indices read modulo $N$,
and
\[
 \Hom(S_i,S_j)=
 \begin{cases}
 k,&i=j,\\
 0,&i\ne j.
 \end{cases}
\]
Give $\mathcal S_N$ the split triangulated structure whose suspension satisfies
$S_i[1]=S_{i+1}$. This is the standard split triangulated structure described
in the triangulated formalism of Beligiannis--Reiten \cite[Chapter I]{BR}.
Explicitly, its distinguished triangles are the triangles isomorphic to
finite direct sums of rotations of
\[
 X\xrightarrow{1_X}X\longrightarrow0\longrightarrow X[1]
 \hspace{2.5mm}\text{and}\hspace{2.5mm}
 X\longrightarrow0\longrightarrow X[1]\xrightarrow{1_{X[1]}}X[1].
\]

\begin{example}[Local truncation need not be global]\label{ex:local-not-global}
Fix $n\geq1$, take $N=n+2$, and put $\X=\add(S_0)$ in $\mathcal S_N$.
The subcategory $\X$ is functorially finite and $n$-rigid. For $j\ne0$, the
zero map $S_j\to0$ is a left $\X$-approximation and its triangle has third
term $S_{j+1}$. Therefore
\[
 \SigX^r(\piX S_j)\cong\piX(S_{j+r})
\]
until the orbit reaches $S_0$. In particular,
\[
 \SigX^n(\piX S_2)=0,
\hspace{2.5mm}
 \SigX^n(\piX S_1)=\piX(S_{n+1})\ne0.
\]
Thus $\operatorname{Tr}^{+}_{n}(\X)$ is a proper, nontrivial local
truncation locus while $\SigX^n$ is not the zero functor. Moreover,
\[
 \E^n(S_0,S_2)=\Hom(S_0,S_2[n])\cong k,
\]
so $S_2$ fails the orthogonality hypothesis in
Theorem~\ref{thm:local-reconstruction}. This also shows directly why local
truncation alone cannot recover $\X$.
\end{example}

\begin{example}\label{ex:sharp}
Fix $n\geq1$, take $N=n+1$, and put $\X=\add(S_0)$ in $\mathcal S_N$.
For every $1\leq j\leq n$,
\[
 \E^{N-j}(S_0,S_j)\cong k,
 \hspace{2.5mm}
 \E^{j}(S_j,S_0)\cong k.
\]
It follows that $\X$ is two-sided maximally $n$-orthogonal. The same direct
calculation of the quotient suspension gives
\[
 \SigX^{n-1}(\piX S_1)=\piX(S_n)\ne0,
\hspace{2.5mm}
 \SigX^n(\piX S_1)=\piX(S_0)=0.
\]
Hence $\nu_{\X}(\mathcal S_N)=n$, and
\[
 \E^{n+1}(S_0,S_0)=\Hom(S_0,S_0[n+1])\cong k.
\]
This realizes both sharp conclusions of Theorem~\ref{thm:sharp-truncation}.
\end{example}

In exact categories, the canonical suspension and loop admit a concrete description through known approximation resolutions.

\begin{proposition}\label{prop:exact-two-resolutions}
	Let $\mathcal E$ be an exact category and $\X=\add\X$ satisfy
	\[
	\operatorname{Ext}^i_{\mathcal E}(\X,\X)=0,
	\hspace{2.5mm} 1\leq i\leq n.
	\]
	Then the following are equivalent.
	\begin{enumerate}[{\rm(1)}]
		\item $\X$ is an $(n+1)$ cluster tilting subcategory of $\mathcal E$.
		\item Every $E\in\mathcal E$ admits admissible exact sequences
		\[
		0\longrightarrow E\longrightarrow X^0\longrightarrow X^1\longrightarrow\cdots\longrightarrow X^n\longrightarrow0
		\]
		and
		\[
		0\longrightarrow X_n\longrightarrow\cdots\longrightarrow X_1\longrightarrow X_0\longrightarrow E\longrightarrow0
		\]
		with all terms $X^i,X_i$ in $\X$.
	\end{enumerate}
	Here an admissible exact sequence means a sequence obtained by splicing
	conflations in the exact structure of $\mathcal E$.
	More precisely, the first family of sequences exists for every $E$ if and
	only if $\X$ is strongly covariantly finite and $\SigX^n\cong\zero$.
	Dually, the second family exists for every $E$ if and only if $\X$ is
	strongly contravariantly finite and $\OmX^n\cong\zero$.
\end{proposition}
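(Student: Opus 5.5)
The plan is to prove the two "more precisely" assertions separately. The equivalence (1)$\Leftrightarrow$(2) then follows from Theorem~\ref{thm:recognition}. That theorem applies because every exact category is extriangulated with $\E=\operatorname{Ext}^1_{\mathcal E}$, and the positive extensions of Gorsky--Nakaoka--Palu agree with Yoneda $\operatorname{Ext}^i_{\mathcal E}$ for spliced conflations. If one wants to avoid smallness, the dimension-shifting in Lemma~\ref{lem:dimension-shifting} can be run directly with Yoneda Ext long exact sequences, as in the proof of Corollary~\ref{cor:triangulated-ntruncated}.

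For the first family, suppose $\X$ is strongly covariantly finite and $\SigX^n\cong\zero$. Fix $E$ and build the tower \eqref{eq:tower} with $A_0=E$. Each $A_j\to X_j\to A_{j+1}$ is a conflation. By \eqref{eq:sigma-tower} and Lemma~\ref{lem:zero-object-quotient}, we have $A_n\in\X$. Splicing the conflations gives
\[
0\to E\to X_0\to X_1\to\cdots\to X_{n-1}\to A_n\to0,
\]
which is the required sequence with $X^i=X_i$ for $i<n$ and $X^n=A_n$.

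Conversely, suppose every $E$ has such a coresolution, and write its first conflation as $E\to X^0\to C^1$. I first check that $E\to X^0$ is a left $\X$-approximation. Applying $\Hom(-,X)$ gives the exact sequence
\[
\Hom(X^0,X)\to\Hom(E,X)\to\operatorname{Ext}^1(C^1,X),
\]
so it suffices to show $\operatorname{Ext}^1(C^1,X)=0$. The cokernels $C^j$ sit in conflations $C^j\to X^j\to C^{j+1}$ with $C^n=X^n\in\X$. Dimension shifting along these conflations, using $n$-rigidity, gives
\[
\operatorname{Ext}^1(C^1,X)\cong\operatorname{Ext}^2(C^2,X)\cong\cdots\cong\operatorname{Ext}^{n}(C^n,X)=0.
\]
The same argument at each stage shows that $C^j\to X^j$ is a left approximation for every $j$. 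Hence $\X$ is strongly covariantly finite. The spliced sequence can then be used to compute $\SigX$: independence of the choice of approximation triangle (Theorem~\ref{thm:known-onesided}) gives $\SigX^j(\piX E)\cong\piX C^j$, so $\SigX^n(\piX E)\cong\piX X^n=0$. Lemma~\ref{lem:objectwise-zero} then gives $\SigX^n\cong\zero$.

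The second family is handled by the dual argument, using \eqref{eq:right-tower} and \eqref{eq:omega-tower}. The main point needing care is the dimension shift showing that the given coresolution consists of approximation conflations. The indices must stay within $1\le i\le n$ so that only the assumed rigidity range is used: the top degree used is $\operatorname{Ext}^n(X^n,X)$, which vanishes by $n$-rigidity. Combining the two families with Theorem~\ref{thm:recognition} yields (1)$\Leftrightarrow$(2), since both families exist exactly when $\X$ is strongly functorially finite and $\SigX^n\cong\zero\cong\OmX^n$.
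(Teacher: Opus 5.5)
Your two one-sided arguments are the same as the paper's. In one direction you dimension-shift $\operatorname{Ext}^1(C^{j+1},X)\cong\cdots\cong\operatorname{Ext}^{n-j}(X^n,X)=0$ to show that a given coresolution is a tower of left-approximation conflations, and hence computes $\SigX^n$. In the other you splice the canonical tower, using $A_n\in\X$.

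The genuine difference is (1)$\Leftrightarrow$(2). The paper does not derive this at all; it cites Kvamme's criterion with $d=n+1$. You instead deduce it from Theorem~\ref{thm:recognition} together with the one-sided statements. This buys something: Kvamme's criterion becomes a consequence of the paper's own truncation machinery. Your direction (2)$\Rightarrow$(1) is complete. The two families give strong functorial finiteness, $\SigX^n\cong\zero$ and $\OmX^n\cong\zero$, and Theorem~\ref{thm:one-sided} then gives both orthogonality equalities.

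The direction (1)$\Rightarrow$(2) has a step you have not supplied. Theorem~\ref{thm:recognition} requires $\X$ to be \emph{strongly} functorially finite. Indeed, by Theorem~\ref{thm:known-onesided}, even the existence of $\SigX$ and $\OmX$ depends on this, i.e. on having left approximations that are inflations and right approximations that are deflations. The paper's definition of $(n+1)$ cluster tilting asks only for ordinary functorial finiteness. The paper obtains the strong version only when enough projectives and injectives exist, since these then lie in $\X$. A general exact category need not have enough of either, so "$\X$ is $(n+1)$ cluster tilting, hence Theorem~\ref{thm:recognition} applies" is not justified as written.

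The natural fix is to use the usual exact-category convention, which the paper mentions in Section~2, in which $\X$ is also generating and cogenerating. Then argue as follows. Let $i\colon E\to X'$ be an inflation with $X'\in\X$, and let $f\colon E\to X_E$ be a left $\X$-approximation. The pushout of $i$ along $f$ yields a conflation $E\to X_E\oplus X'\to P$ whose first map is $\binom{-f}{i}$. Hence $\binom{f}{i}$ is an inflation, and it is still a left $\X$-approximation. The dual argument handles right approximations. Alternatively, you would have to show directly that maximal orthogonality forces cogeneration, which is not clear without enough injectives.

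With this step added, your route is a valid alternative to the citation. Finally, for the comparison of extension groups, rely on your fallback of running the dimension shift with Yoneda long exact sequences. Do not rely on the unsupported claim that the Gorsky--Nakaoka--Palu $\E^i$ equal Yoneda $\operatorname{Ext}^i$ in every small exact category. The paper records that comparison only under enough projectives and injectives.
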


\begin{proof}
	The equivalence of \textup{(1)} and \textup{(2)} is the criterion of
	Kvamme \cite[Proposition 4.4]{Kvamme}, applied with $d=n+1$. We do not reprove that
	result. It remains to verify the two one-sided assertions.
	
	Consider the first admissible exact sequence and write it as
	\[
	0\longrightarrow A_0=E\longrightarrow X^0\longrightarrow X^1
	\longrightarrow\cdots\longrightarrow X^n\longrightarrow0.
	\]
	For $0\le j\le n-1$, let $A_{j+1}$ be the cokernel of the admissible monomorphism $A_j\to X^j$. Exactness gives conflations
	\[
	0\longrightarrow A_j\longrightarrow X^j\longrightarrow A_{j+1}\longrightarrow0,
	\hspace{2.5mm} 0\le j\le n-1,
	\]
	with $A_n\cong X^n\in\X$. We claim that each $A_j\to X^j$ is a left $\X$-approximation. Fix $X\in\X$. Dimension shifting along the tail of the resolution, together with the $n$-rigidity of $\X$, gives
	\[
	\operatorname{Ext}^1_{\mathcal E}(A_{j+1},X)
	\cong
	\operatorname{Ext}^{\,n-j}_{\mathcal E}(X^n,X)=0.
	\]
	Applying $\operatorname{Hom}_{\mathcal E}(-,X)$ to
	$0\to A_j\to X^j\to A_{j+1}\to0$ therefore shows that
	\[
	\operatorname{Hom}_{\mathcal E}(X^j,X)
	\longrightarrow
	\operatorname{Hom}_{\mathcal E}(A_j,X)
	\]
	is surjective. Thus the $n$ conflations above form a valid tower of left approximations for the canonical one-sided suspension, and
	\[
	\SigX^n(\piX E)\cong\piX(A_n)=0.
	\]
	Since this holds for every $E$, Lemma~\ref{lem:objectwise-zero} gives $\SigX^n\cong\zero$.
	The same argument also proves strong covariant finiteness, because the first
	conflation supplies a left approximation inflation for every $E$.
	
	Conversely, if the canonical one-sided suspension is defined and $\SigX^n\cong\zero$, its defining tower consists of $n$ conflations
	\[
	A_j\longrightarrow X^j\longrightarrow A_{j+1},
	\hspace{2.5mm} 0\le j\le n-1.
	\]
	By Lemma~\ref{lem:zero-object-quotient}, the terminal object $A_n$ lies in $\X$. Splicing these conflations gives the admissible exact sequence
	\[
	0\longrightarrow E\longrightarrow X^0\longrightarrow\cdots
	\longrightarrow X^{n-1}\longrightarrow A_n\longrightarrow0,
	\]
	which is the first resolution after renaming $A_n$ as $X^n$. This proves
	the first one-sided equivalence. The statement for $\OmX^n$ and the second
	resolution is dual.
\end{proof}

\subsection{Nilpotency index of the canonical quotient suspension}

\begin{definition}\label{def:nil-index}
Assume that $\X$ is strongly functorially finite.  Define
\[
 \nu_{\X}(\C)
 :=\min\{m\geq1\mid \SigX^m\cong\zero\}
 \in\mathbb N\cup\{\infty\}.
\]
Here we use the convention $\min\varnothing=\infty$.
By Corollary~\ref{cor:sigma-omega}, the same number is obtained by replacing $\SigX$ with $\OmX$.
\end{definition}

\begin{proposition}\label{prop:rigidity-collapse}
Let $(\C,\E,\mathfrak s)$ be a small $R$-linear extriangulated category and $\X=\add\X$.
\begin{enumerate}[{\rm(1)}]
\item Suppose that $\X$ is strongly contravariantly finite,
\[
 \X=\bigcap_{i=1}^{n}\X^{\perp_i},
\]
and
\[
 \E^{n+1}(\X,\X)=0.
\]
Then $\C=\X$ and $\E=0$.
\item Dually, the same conclusion holds if $\X$ is strongly covariantly finite,
\[
 \X=\bigcap_{i=1}^{n}{}^{\perp_i}\X,
\]
and $\E^{n+1}(\X,\X)=0$.
\end{enumerate}
\end{proposition}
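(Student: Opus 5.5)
Part (1) is proved in four steps: (a) show $\E^{n+1}(A,\X)=0$ for every $A$, (b) use this to place every object in $\bigcap_{i=1}^{n}\X^{\perp_i}$, (c) conclude $\C=\X$ from the hypothesis, and (d) deduce $\E=0$. Part (2) is the dual argument.

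\textbf{Step (a).} Fix $A\in\C$ and $X\in\X$. Using strong contravariant finiteness, build the right approximation tower \eqref{eq:right-tower} with $B_0=A$. For each $\E$-triangle $B_{j+1}\to X^j\xrightarrow{y_j}B_j\dashrightarrow$, apply the long exact sequence of Gorsky--Nakaoka--Palu \cite[Theorem 3.6]{GNP} in the second variable, with $\C(X,-)$ and $\E^r(X,-)$. Since $y_j$ is a right $\X$-approximation, $\C(X,X^j)\to\C(X,B_j)$ is surjective. Since $\E(X,X^j)=0$ by rigidity, this gives $\E(X,B_{j+1})=0$. In higher degrees, the outer terms $\E^r(X,X^j)$ vanish for $1\le r\le n+1$, using $n$-rigidity together with the extra hypothesis $\E^{n+1}(\X,\X)=0$. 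This yields
\[
\E^{r+1}(X,B_{j+1})\cong\E^{r}(X,B_j)\qquad(1\le r\le n).
\]
This is the mirror image of Lemma~\ref{lem:dimension-shifting}(2), one degree further.

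\textbf{Step (b).} Iterating gives
\[
\E^{r}(X,B_{r})\cong \E^{1}(X,B_{1})=0\qquad(1\le r\le n),
\]
and similar isomorphisms with a shift. I do not want to show $B_j\in\X$ directly. Instead I want every object to land in $\bigcap_{i}\X^{\perp_i}$. So I take an arbitrary $A$, set $B_0=A$, and try to show $\E^i(X,A)=0$ for $1\le i\le n$. The route I would try first is a descending argument on $B_1$. The sequence $\C(X,X^0)\to\C(X,A)\to\E(X,B_1)\to\E(X,X^0)=0$ gives $\E(X,B_1)=0$. More generally, the isomorphisms above give
\[
\E^{i}(X,B_1)\cong\E^{i-1}(X,A)\quad\text{for }2\le i\le n+1 .
\]
Hence $\E^{i}(X,A)\cong\E^{i+1}(X,B_1)$, and this must be related to $\E^{n+1}$-vanishing.

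The cleaner strategy is as follows. First show $B_1$ satisfies $\E^i(X,B_1)=0$ for all $1\le i\le n$ whenever $A$ does, so the tower stays in the perpendicular class. Then take $A$ arbitrary and consider $B_n$. The isomorphisms
\[
\E^{i}(X,B_n)\cong \E^{i-n}(\cdots)
\]
push degrees down to degree one for $i\le n$, which vanishes. They push up to $\E^{n+1}(X,B_{n-i+1})$ otherwise, which I expect to relate to $\E^{n+1}(\X,\X)=0$ after showing the earlier $B$'s are handled. So $B_n\in\bigcap_{i=1}^{n}\X^{\perp_i}=\X$.

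\textbf{Step (c).} Now run the splitting argument of Theorem~\ref{thm:local-reconstruction-dual} upward. If $B_{j+1}\in\X$, the triangle $B_{j+1}\to X^j\to B_j\dashrightarrow$ has class in $\E(B_j,B_{j+1})$. To see that this group vanishes, I would show $\E(B_j,X)=0$ for $X\in\X$. Dual dimension shifting gives
\[
\E(B_j,X)\cong\E^{n+1-j}\text{-type groups}.
\]
The vanishing should come from the fact that $B_n\in\X$ has a finite $\X$-resolution of length $n$, combined with $\E^{i}(\X,\X)=0$ for $i\le n+1$. Concretely, $\E(B_{n-1},X)$ embeds into $\E(X^{n-1},X)$ once $\E^{1}(B_n,X)=0$. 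Inductively, $\E^{r}(B_j,X)=0$ for $r\le n+1-(n-j)$, and $\E^{n+1}$-rigidity supplies the top degree needed. Descending, $A=B_0\in\X$. Thus $\C=\X$.

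\textbf{Step (d).} Since $\C=\X$ is rigid, $\E(\C,\C)=\E(\X,\X)=0$, so $\E=0$.

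\textbf{Main obstacle.} The degree bookkeeping in Steps (b) and (c) is the main obstacle: showing that the single extra vanishing $\E^{n+1}(\X,\X)=0$ is exactly enough to kill every extension class along the tower. This is where the $n$-step tower and the $(n+1)$-st degree must match. If the direct approach fails, I would fall back on the following. Show first that every $B_1$ (a first syzygy by right approximations) lies in $\bigcap_{i=1}^n\X^{\perp_i}=\X$; this uses $\E^{i}(X,B_1)\cong\E^{i-1}(X,A)$ for $i\ge2$, degree one vanishing, and the top degree $\E^{n+1}$. Then $A$ has an $\E$-triangle $B_1\to X^0\to A$ with both $B_1,X^0\in\X$. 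Finally, show that $\X$ is closed under such cones using the left orthogonality obtained by the same shifting.
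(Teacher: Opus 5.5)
Your Steps (a) and (b) are sound. The covariant isomorphisms $\E^{r}(X,B_j)\cong\E^{r+1}(X,B_{j+1})$ hold for $1\le r\le n$. Lowering degrees then gives $\E^{r}(X,B_n)\cong\E^{1}(X,B_{n-r+1})=0$, so $B_n\in\bigcap_{i=1}^n\X^{\perp_i}=\X$.

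The proof breaks in Step (c). To split $B_{j+1}\to X^j\to B_j\dashrightarrow$ you need $\E(B_j,\X)=0$, and nothing supplies it. Consider the contravariant sequence
\[
\C(X^{n-1},X)\to\C(B_n,X)\to\E(B_{n-1},X)\to\E(X^{n-1},X)\to\E(B_n,X).
\]
Vanishing of $\E(B_n,X)$ only makes $\E(B_{n-1},X)\to\E(X^{n-1},X)$ surjective. That map is injective, so that $\E(B_{n-1},X)=0$, exactly when $\C(X^{n-1},X)\to\C(B_n,X)$ is surjective. This means $B_n\to X^{n-1}$ would have to be a left $\X$-approximation, and a right approximation tower does not give that. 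In Theorem~\ref{thm:local-reconstruction-dual} the vanishing $\E(B_j,\X)=0$ came from the hypothesis $A\in\bigcap_i{}^{\perp_i}\X$, which you do not have here. Your fallback is circular. The isomorphism $\E^i(X,B_1)\cong\E^{i-1}(X,A)$ yields $B_1\in\X$ only if you already know $\E^{i-1}(X,A)=0$ for $i-1\le n-1$. Even then, putting the cone $A$ into $\X$ again needs $\E(A,B_1)=0$.

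The missing idea is to run your own isomorphisms along the diagonal $r=j+1$:
\[
\E(X,A)=\E^1(X,B_0)\cong\E^2(X,B_1)\cong\cdots\cong\E^{n+1}(X,B_n)=0.
\]
The last group vanishes because $B_n\in\X$ and $\E^{n+1}(\X,\X)=0$; this is exactly where the extra degree enters. Hence every object of $\X$ is projective. This shift only handles degree one. For $i\ge2$ it sends $\E^i(X,A)$ to $\E^{n+1}(X,B_{n+1-i})$, and $B_{n+1-i}$ is not known to lie in $\X$. So you need one more ingredient: in the Gorsky--Nakaoka--Palu construction $\E^m=\E^{\diamond m}$, and therefore projectivity forces $\E^m(X,-)=0$ for all $m\ge1$. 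Then every $A\in\C$ lies in $\bigcap_{i=1}^n\X^{\perp_i}=\X$ directly, with no splitting argument. Finally $\E=\E(\X,\X)=0$ as in your Step (d). This is the paper's proof.
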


\begin{proof}
We prove \textup{(1)}.  The displayed maximal orthogonality implies
$\E^i(\X,\X)=0$ for $1\leq i\leq n$. Combined with the last hypothesis, this yields that \(\mathcal{X}\) is \((n+1)\)-rigid. By the first assertion of Proposition~\ref{prop:dual},
$\OmX^n\cong\zero$.

Fix $A\in\C$, put $B_0=A$, and choose an iterated right $\X$-approximation $\E$-triangle
\[
 B_{j+1}\longrightarrow X_j\longrightarrow B_j\dashrightarrow,
\hspace{2.5mm} X_j\in\X,
\hspace{2.5mm} 0\leq j\leq n-1.
\]
The canonical loop construction gives
$\OmX^n(\piX A)\cong\piX(B_n)$. Hence
$B_n\in\X$ by Lemma~\ref{lem:zero-object-quotient}. Fix $X\in\X$. For each $0\leq j\leq n-1$, the long exact sequence of Gorsky--Nakaoka--Palu \cite[Theorem 3.6]{GNP} attached to
\[
 B_{j+1}\longrightarrow X_j\longrightarrow B_j\dashrightarrow
\]
contains
\[
 \E^{j+1}(X,X_j)\longrightarrow
 \E^{j+1}(X,B_j)\longrightarrow
 \E^{j+2}(X,B_{j+1})\longrightarrow
 \E^{j+2}(X,X_j).
\]
Since $1\leq j+1\leq n$ and $2\leq j+2\leq n+1$, both outer terms vanish by $(n+1)$-rigidity.  Hence
\[
 \E^{j+1}(X,B_j)\cong\E^{j+2}(X,B_{j+1})
 \hspace{2.5mm}(0\leq j\leq n-1).
\]
Iterating these isomorphisms gives
\[
 \E(X,A)
 \cong\E^2(X,B_1)
 \cong\cdots\cong
 \E^{n+1}(X,B_n)=0,
\]
where the last term vanishes because $B_n\in\X$.
Thus every object of $\X$ is projective in $\C$.

By the construction of positive extensions in Gorsky--Nakaoka--Palu
\cite[Section 3]{GNP}, $\E^m=\E^{\diamond m}$. Thus projectivity of $X\in\X$ implies
$\E^m(X,-)=0$ for every $m\geq1$.  Hence every $A\in\C$ belongs to
$\bigcap_{i=1}^{n}\X^{\perp_i}=\X$, so $\C=\X$.  Finally
$\E(\X,\X)=0$, and therefore $\E=0$ on $\C$.  Part \textup{(2)} is dual.
\end{proof}

\begin{remark}\label{rem:HMP-collapse}
In an abelian category, Huerta--Mendoza--P\'erez
\cite[Proposition 5.27]{HMP} proved a related collapse statement for an
$(n+1)$ cluster tilting subcategory. They showed that one further self
extension vanishing forces the cluster tilting subcategory to coincide with
the projective objects and with the injective objects.
Proposition~\ref{prop:rigidity-collapse} has a one-sided formulation in a
general small $R$-linear extriangulated category. Its role here is to control
the \emph{least} truncation exponent of the canonical ideal quotient.
\end{remark}

\begin{theorem}\label{thm:sharp-truncation}
Let $(\C,\E,\mathfrak s)$ be a small $R$-linear extriangulated category with $\E\neq0$, and let $\X=\add\X$ be strongly functorially finite.  Assume that $\X$ satisfies the two-sided maximal $n$-orthogonality condition.  Then
\[
 \E^{n+1}(\X,\X)\neq0
\]
and
\[
 \nu_{\X}(\C)=n.
\]
In particular, the positive global dimension of $\C$ in the sense of Gorsky--Nakaoka--Palu \cite[Definition 3.30]{GNP} is at least $n+1$ (or infinite).
\end{theorem}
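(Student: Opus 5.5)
The first conclusion follows by contraposition from Proposition~\ref{prop:rigidity-collapse}. Suppose $\E^{n+1}(\X,\X)=0$. The two-sided maximal $n$-orthogonality gives $\X=\bigcap_{i=1}^n\X^{\perp_i}$, and $\X$ is strongly contravariantly finite. Hence part~(1) of that proposition yields $\C=\X$ and $\E=0$. This contradicts the nonsplit hypothesis $\E\neq0$, so $\E^{n+1}(\X,\X)\neq0$. The final assertion on positive global dimension is then immediate from the definition of $\operatorname{pgldim}$, since $\E^{n+1}\neq0$.

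For the nilpotency index, first note that maximal orthogonality implies $n$-rigidity: every $X\in\X$ lies in $\bigcap_{i=1}^n\X^{\perp_i}$. Theorem~\ref{thm:recognition} therefore gives $\SigX^n\cong\zero$, so $\nu_{\X}(\C)\leq n$. For the reverse inequality, suppose $\SigX^m\cong\zero$ for some $1\leq m<n$. The plan is to show that $\X$ is then maximally $m$-orthogonal and $(m+1)$-rigid, which triggers the collapse again.

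Since $m<n$, $\X$ is $m$-rigid, and $\X$ is strongly functorially finite. Applying Theorem~\ref{thm:recognition} with $m$ in place of $n$ gives
\[
 \X=\bigcap_{i=1}^{m}{}^{\perp_i}\X=\bigcap_{i=1}^{m}\X^{\perp_i}.
\]
Moreover, $m+1\leq n$, so $n$-rigidity gives $\E^{m+1}(\X,\X)=0$. Now apply Proposition~\ref{prop:rigidity-collapse}(1) with $m$ in place of $n$: it yields $\C=\X$ and $\E=0$, again a contradiction. Hence no $m<n$ works, and $\nu_{\X}(\C)=n$. We need $m\geq1$ for the earlier results to apply, and this holds by the definition of $\nu_{\X}$.

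The main point requiring care is that the earlier results are stated for an arbitrary positive integer. Proposition~\ref{prop:rigidity-collapse} and Theorem~\ref{thm:recognition} hold for every $n\geq1$, so they may be invoked with the smaller exponent $m$. The global ``$n$'' fixed in the paper is only notational, so this substitution is legitimate. I expect no genuine obstacle beyond checking that the rigidity range $1\leq i\leq m+1$ is covered by the original $n$-rigidity, which it is because $m+1\leq n$.
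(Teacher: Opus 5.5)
Your proposal is correct and follows the paper's proof essentially step for step: Proposition~\ref{prop:rigidity-collapse} rules out $\E^{n+1}(\X,\X)=0$, and Theorem~\ref{thm:recognition} gives $\nu_{\X}(\C)\leq n$. A hypothetical smaller exponent $m$ is excluded by applying both results at level $m$, with $n$-rigidity supplying $\E^{m+1}(\X,\X)=0$; the only cosmetic difference is that you commit to part~(1) of the collapse proposition, whereas the paper notes that either part suffices.
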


\begin{proof}
If $\E^{n+1}(\X,\X)=0$, either part of Proposition~\ref{prop:rigidity-collapse} would imply $\E=0$, contrary to the hypothesis.  This proves the first assertion.

By Theorem~\ref{thm:recognition}, $\SigX^n\cong\zero$, so
$\nu_{\X}(\C)\leq n$.  For $n=1$ this already gives
$\nu_{\X}(\C)=1$.  Assume $n\geq2$ and suppose that
$\nu_{\X}(\C)=m<n$.  Since maximal $n$-orthogonality implies $n$-rigidity, $\X$ is $m$-rigid. Applying Theorem~\ref{thm:recognition} with $m$ in place of $n$ shows that $\X$ is two-sided maximally $m$-orthogonal.  The original $n$-rigidity also gives
$\E^{m+1}(\X,\X)=0$.  Proposition~\ref{prop:rigidity-collapse}, now at level $m$, forces $\E=0$, again a contradiction.  Hence $\nu_{\X}(\C)=n$.

Finally, $\E^{n+1}(\X,\X)\neq0$ means that the bifunctor $\E^{n+1}$ is nonzero, so the positive global dimension cannot be at most $n$.
\end{proof}

\begin{corollary}\label{cor:strict-triangulated}
Let $\T\neq0$ be a triangulated category and $\X=\add\X$ be a functorially finite $(n+1)$ cluster tilting subcategory.  Then the canonical suspension of $\T/[\X]$ has nilpotency index exactly $n$.
\[
 \min\{m\geq1\mid \Sigma_{\X}^{m}\cong\zero\}=n.
\]
Equivalently, the canonical quotient is $n$-truncated and, for $n\geq2$, it is not $(n-1)$-truncated.
\end{corollary}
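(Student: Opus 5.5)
The plan is to deduce the corollary from Theorem~\ref{thm:sharp-truncation}, applied to $\T$ with its triangulated extriangulated structure $\E(C,A)=\T(C,A[1])$. The one point needing care is that Theorem~\ref{thm:sharp-truncation} is stated for small $R$-linear categories. I will therefore either pass to a skeleton and take $R=\mathbb Z$, or observe, exactly as in the proof of Corollary~\ref{cor:triangulated-ntruncated}, that in a triangulated category the positive extensions are the shifted Hom groups $\E^i(C,A)=\T(C,A[i])$. In the second approach, every long exact sequence invoked in Propositions~\ref{prop:dual} and \ref{prop:rigidity-collapse}, Theorem~\ref{thm:recognition} and Theorem~\ref{thm:sharp-truncation} is the standard long exact Hom sequence, so no smallness is needed. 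Zhou--Zhu \cite[Remark 3.20]{ZZtri} upgrades functorial finiteness to strong functorial finiteness. By the definition of $(n+1)$ cluster tilting, $\X$ satisfies the two-sided maximal $n$-orthogonality condition of Definition~\ref{def:maxorth}.

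Next I must check the hypothesis $\E\neq0$, that is, that $\T$ is nonsplit. Since $\T\neq0$, choose a nonzero object $A$. Then $\E(A[1],A)=\T(A[1],A[1])\ni\id_{A[1]}\neq0$, because $A[1]\neq0$ (the shift is an automorphism). Hence $\E\neq0$, and Theorem~\ref{thm:sharp-truncation} gives $\nu_{\X}(\T)=n$. This is exactly the displayed minimum.

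For the equivalent reformulation, recall that $n$-truncated means $\SigX^n\cong\zero$, which is equivalent to $\OmX^n\cong\zero$ by Corollary~\ref{cor:sigma-omega}. Minimality of $n$ then says that, for $n\ge2$, $\SigX^{n-1}\not\cong\zero$, i.e.\ the quotient is not $(n-1)$-truncated.

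The only step I expect to need any attention is the transfer of the sharpness theorem to a possibly non-small triangulated category. I would handle it by the remark already used in Corollary~\ref{cor:triangulated-ntruncated}, or alternatively by passing to a skeleton. The one gap in the skeleton route is that an arbitrary $\T$ need not be essentially small, and that is why I prefer the Hom-sequence route.
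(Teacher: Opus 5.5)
Your proposal is correct and follows essentially the paper's own route. Like the paper, you rerun the collapse argument of Proposition~\ref{prop:rigidity-collapse} and the contradiction argument of Theorem~\ref{thm:sharp-truncation} with the ordinary long exact Hom sequences and $\E^i(A,B)=\T(A,B[i])$, so no smallness is needed. You verify nonsplitness from $\T\neq0$ via the shift; your $\id_{A[1]}\in\E(A[1],A)$ is the same point as the paper's essential-surjectivity argument. You then read off the truncation reformulation from the definition, and your explicit appeal to Zhou--Zhu for strong functorial finiteness is a detail the paper leaves implicit here, having used it in Corollary~\ref{cor:triangulated-ntruncated}.
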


\begin{proof}
The proof of Proposition~\ref{prop:rigidity-collapse} and Theorem~\ref{thm:sharp-truncation} uses only approximation towers and the long exact sequences for higher extensions.  In a triangulated category these are the ordinary long exact Hom sequences with
$\E^i(A,B)=\T(A,B[i])$, so the same collapse argument applies without any smallness assumption.  Moreover, the associated extriangulated structure of a nonzero triangulated category is not split. Indeed, if
$\T(A,B[1])=0$ for all $A,B$, then essential surjectivity of the shift gives $\T(A,C)=0$ for all $A,C$, forcing $\T=0$. Hence the contradiction argument in Theorem~\ref{thm:sharp-truncation} yields the stated exact nilpotency index. The equivalence with $n$-truncatedness follows from Mochizuki--Nakaoka--Ogawa \cite[Definition 1.2.3]{MNO}.
\end{proof}

\begin{corollary}\label{cor:pgldim-obstruction}
Let $(\C,\E,\mathfrak s)$ be a nonsplit small $R$-linear extriangulated category of finite positive global dimension $d$.  If $\C$ admits a strongly functorially finite two-sided maximally $n$-orthogonal subcategory, then
\[
 n\leq d-1.
\]
Equivalently, no such subcategory can exist at level $n\geq d$.
\end{corollary}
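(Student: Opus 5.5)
The corollary should follow almost formally from Theorem~\ref{thm:sharp-truncation} together with the definition of positive global dimension.

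Let $\X=\add\X$ be a strongly functorially finite subcategory satisfying the two-sided maximal $n$-orthogonality condition. Since $\C$ is nonsplit, i.e.\ $\E\neq0$, all hypotheses of Theorem~\ref{thm:sharp-truncation} hold. That theorem therefore gives
\[
 \E^{n+1}(\X,\X)\neq0 .
\]
In particular, the bifunctor $\E^{n+1}$ is nonzero on $\C$.

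By the definition of $\operatorname{pgldim}\C$ as the supremum of the set $\{i\geq1\mid\E^i\neq0\}\cup\{0\}$, the index $n+1$ belongs to this set. Hence
\[
 d=\operatorname{pgldim}\C\geq n+1,
\]
which is exactly $n\leq d-1$. The equivalent formulation is the contrapositive: if $n\geq d$, no strongly functorially finite two-sided maximally $n$-orthogonal subcategory can exist.

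There is one point to check in the definition. The supremum is a bound on the set, not a statement that $\E^i$ vanishes for every $i>d$. But $n+1$ lies in the set, so $d\geq n+1$ holds directly, and no vanishing in intermediate degrees is needed.

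The main obstacle sits entirely in Theorem~\ref{thm:sharp-truncation}, namely the collapse argument of Proposition~\ref{prop:rigidity-collapse}, which may be assumed here. One should only note where nonsplitness is used: for $d=0$, i.e.\ the split case, the statement would be false. In that case every subcategory containing all objects is maximally orthogonal, and the collapse conclusion $\C=\X$, $\E=0$ is consistent rather than contradictory.
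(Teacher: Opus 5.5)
Your proposal is correct and is essentially the paper's argument: Theorem~\ref{thm:sharp-truncation} gives $\E^{n+1}(\X,\X)\neq0$, so $n+1$ belongs to the set whose supremum defines $d$, and hence $d\geq n+1$. Your added remark about the split case ($\X=\C$ with $\E=0$ violates the bound) is also accurate.
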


\begin{proof}
This is immediate from Theorem~\ref{thm:sharp-truncation}, which gives
$\E^{n+1}\neq0$ and hence $d\geq n+1$.
\end{proof}

\begin{corollary}\label{cor:minimal-level}
Let $(\C,\E,\mathfrak s)$ be a small $R$-linear extriangulated category and let
$\X=\add\X$ be strongly functorially finite and $n$-rigid for some $n\geq1$.
For $1\leq m\leq n$, the following are equivalent:
\begin{enumerate}[{\rm(1)}]
\item $\nu_{\X}(\C)=m$.
\item $\X$ satisfies the two-sided maximal $m$-orthogonality condition, and it satisfies no two-sided maximal $r$-orthogonality condition for $1\leq r<m$.
\end{enumerate}
If $\C$ has enough projectives and injectives, this is equivalently the least $m$ for which
$\X$ is an $(m+1)$ cluster tilting subcategory.
\end{corollary}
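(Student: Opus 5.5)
The plan is to reduce everything to Theorem~\ref{thm:recognition} applied at each level $r\le n$. Since $\X$ is $n$-rigid, it is $r$-rigid for every $1\le r\le n$. Strong functorial finiteness holds unconditionally. So for each such $r$, Theorem~\ref{thm:recognition} at level $r$ gives
\[
\SigX^r\cong\zero \quad\Longleftrightarrow\quad \X \text{ is two-sided maximally } r\text{-orthogonal}.
\]

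\textbf{(1)$\Rightarrow$(2).} From $\nu_{\X}(\C)=m$ we have $\SigX^m\cong\zero$. Since $m\le n$, the equivalence above at level $m$ shows that $\X$ is two-sided maximally $m$-orthogonal. Now let $r<m$. By minimality of $m$ in Definition~\ref{def:nil-index}, $\SigX^r\not\cong\zero$. Since $r\le n$, the equivalence above at level $r$ shows that $\X$ is not two-sided maximally $r$-orthogonal.

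\textbf{(2)$\Rightarrow$(1).} Maximal $m$-orthogonality gives $\SigX^m\cong\zero$ by Theorem~\ref{thm:recognition}, so $\nu_{\X}(\C)\le m$. Suppose instead that $\nu_{\X}(\C)=r<m$. Then $\SigX^r\cong\zero$ with $r\le n$, so Theorem~\ref{thm:recognition} at level $r$ makes $\X$ maximally $r$-orthogonal. This contradicts (2), hence $\nu_{\X}(\C)=m$. The only bookkeeping to watch is that every level invoked is at most $n$, so that the rigidity hypothesis of the theorem holds there.

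\textbf{The final sentence.} Assume $\C$ has enough projectives and injectives. By \cite[Corollary 3.23]{GNP}, the positive extensions $\E^i$ agree with the usual higher extensions. Since $\X$ is strongly functorially finite, it is in particular functorially finite. So for each $r$, two-sided maximal $r$-orthogonality is exactly the $(r+1)$ cluster tilting condition; this is the identification noted after Definition~\ref{def:cluster-tilting}, and also Corollary~\ref{cor:classical}.

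Condition (2) then reads: $m$ is the least $r$ for which $\X$ is $(r+1)$ cluster tilting, provided such an $r$ occurs in the range $r\le n$. One point needs care. If $\X$ were $(r+1)$ cluster tilting for some $r>n$, the statement would still only refer to $m\le n$. However, the minimality search in (2) runs over $r<m\le n$, so this situation does not affect the equivalence. The main obstacle is therefore only phrasing: "least $m$" should be read within the range $1\le m\le n$.
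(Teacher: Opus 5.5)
Your proposal is correct and matches the paper's proof: both directions come from applying Theorem~\ref{thm:recognition} at levels $r\le m\le n$. Such levels are admissible because $n$-rigidity implies $r$-rigidity and $\SigX$ does not depend on the level. The final sentence reduces to Corollary~\ref{cor:classical} via the Gorsky--Nakaoka--Palu comparison. Your extra remark on reading ``least $m$'' is harmless: condition (2) already excludes every level $r<m$.
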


\begin{proof}
Assume first that $\nu_{\X}(\C)=m$.  Since $m\leq n$, the subcategory $\X$ is $m$-rigid, and Theorem~\ref{thm:recognition} applied with $n=m$ shows that $\X$ satisfies two-sided maximal $m$-orthogonality.  If it satisfied the corresponding condition for some $r<m$, then another application of Theorem~\ref{thm:recognition} would give $\SigX^r\cong\zero$, contrary to the minimality of $m$.

Conversely, suppose that the condition in \textup{(2)} holds. Theorem~\ref{thm:recognition}, with $n=m$, gives $\SigX^m\cong\zero$, so $\nu_{\X}(\C)\leq m$. If $\nu_{\X}(\C)=r<m$, then $n$-rigidity implies $r$-rigidity and Theorem~\ref{thm:recognition}, now with $n=r$, would force two-sided maximal $r$-orthogonality, a contradiction. Hence $\nu_{\X}(\C)=m$. The final assertion follows from Corollary~\ref{cor:classical}.
\end{proof}

\begin{corollary}\label{cor:nil-index-invariant}
Let $(\C,\X)$ and $(\mathcal D,\mathcal Y)$ be pairs for which the canonical pretriangulated ideal quotients are defined.  Suppose
\[
 \Phi\colon \C/[\X]\xrightarrow{\sim}\mathcal D/[\mathcal Y]
\]
is an equivalence compatible with the canonical suspensions, in the sense that
$\Phi\SigX\cong\Sigma_{\mathcal Y}\Phi$ naturally.  Then
\[
 \nu_{\X}(\C)=\nu_{\mathcal Y}(\mathcal D).
\]
In particular, under the hypotheses of Corollary~\ref{cor:minimal-level} on both pairs, an equivalence of the ideal quotients that is compatible with the canonical suspensions preserves the minimal higher maximal orthogonality level. In the classical higher extension setting it preserves the minimal cluster tilting level.
\end{corollary}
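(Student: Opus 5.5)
The plan is to show that the set of exponents $m\geq1$ with $\SigX^m\cong\zero$ is the same for $(\C,\X)$ and $(\mathcal D,\mathcal Y)$. Once this is established, the two minima in Definition~\ref{def:nil-index} coincide, including the case where both sets are empty and both indices equal $\infty$.

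First, I will iterate the given compatibility $\Phi\SigX\cong\Sigma_{\mathcal Y}\Phi$ to obtain $\Phi\SigX^m\cong\Sigma_{\mathcal Y}^m\Phi$ for every $m\geq1$. This is an induction on $m$. The inductive step composes the isomorphism $\Phi\SigX^{m}\SigX\cong\Sigma_{\mathcal Y}^m\Phi\SigX$, obtained by whiskering the inductive hypothesis with $\SigX$, with $\Sigma_{\mathcal Y}^m$ applied to $\Phi\SigX\cong\Sigma_{\mathcal Y}\Phi$.

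Second, suppose $\SigX^m\cong\zero$. Then for every object $A$ of $\C/[\X]$, the object $\Sigma_{\mathcal Y}^m(\Phi A)\cong\Phi(\SigX^mA)$ is a zero object, because an equivalence is additive and therefore preserves zero objects. Every object of $\mathcal D/[\mathcal Y]$ is isomorphic to some $\Phi A$, by essential surjectivity. Also, $\Sigma_{\mathcal Y}^m$ preserves isomorphisms, and any object isomorphic to a zero object is zero. Hence $\Sigma_{\mathcal Y}^m$ vanishes on all objects, and Lemma~\ref{lem:objectwise-zero} gives $\Sigma_{\mathcal Y}^m\cong\zero$.

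For the converse direction, I will take a quasi-inverse $\Psi$ of $\Phi$. Conjugating the compatibility gives $\Psi\Sigma_{\mathcal Y}\cong\Psi\Sigma_{\mathcal Y}\Phi\Psi\cong\Psi\Phi\SigX\Psi\cong\SigX\Psi$, and the same argument then applies symmetrically. Alternatively, one can argue directly: if $\Sigma_{\mathcal Y}^m\cong\zero$, then $\Phi(\SigX^mA)$ is zero, and since $\Phi$ is fully faithful, $\id_{\SigX^mA}$ maps to $0$ and is therefore $0$.

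For the ``in particular'' clause, I will combine this invariance with Corollary~\ref{cor:minimal-level}. Under its hypotheses on both pairs, $\nu$ equals the least two-sided maximal orthogonality level when that level is at most the rigidity degree. In the classical setting, Corollary~\ref{cor:classical} identifies this level with the least cluster tilting level. One point needs care: the levels are only characterized for $m\leq n$. I will therefore phrase the conclusion as follows: if one pair has minimal level $m$ within the range where both are $n$-rigid, then so does the other. No step here is a real obstacle; the only care needed is this range bookkeeping, plus the observation that additivity of $\Phi$ ensures zero objects are preserved.
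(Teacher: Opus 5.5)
Your proposal is correct and follows essentially the same route as the paper: iterate the compatibility to get $\Phi\SigX^m\cong\Sigma_{\mathcal Y}^m\Phi$, then use that an equivalence preserves and reflects zero objects, so the two sets of nilpotency exponents coincide and their minima agree, including the value $\infty$. Your caveat that Corollary~\ref{cor:minimal-level} only identifies levels lying within the rigidity range of both pairs is a worthwhile sharpening of the paper's brief appeal to that corollary.
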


\begin{proof}
For every $m\geq1$, iteration of the compatibility isomorphism gives
$\Phi\SigX^m\cong\Sigma_{\mathcal Y}^m\Phi$.  Since $\Phi$ is an equivalence,
$\SigX^m\cong\zero$ if and only if $\Sigma_{\mathcal Y}^m\cong\zero$.
Thus the two sets of nilpotency exponents coincide. In particular, either both are infinite or they have the same least element.  This proves the first assertion, and the remaining statements follow from Corollary~\ref{cor:minimal-level}.
\end{proof}

The following proposition shows that an equivalence of canonical pretriangulated quotients does not, by itself, preserve the rigidity of the ambient subcategory.

\begin{proposition}\label{prop:no-internal-rigidity}
	Fix $n\geq1$. There exist two exact, hence extriangulated, pairs $(\C_0,\X_0)$ and $(\C_1,\X_1)$ such that their canonical pretriangulated ideal quotients are equivalent to the zero category, while $\X_0$ satisfies the two-sided maximal $n$-orthogonality condition (and is standard $(n+1)$ cluster tilting in this exact setting), whereas $\X_1$ is not even rigid.
\end{proposition}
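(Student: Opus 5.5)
The plan is to construct both pairs explicitly inside module categories over a field $k$, arranging that in each case the subcategory being factored out is the whole ambient category. Then the ideal quotient is the zero category in both cases. The two pairs will differ only in the rigidity of $\X_i=\C_i$.

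For $(\C_1,\X_1)$ I would reuse the pair from Remark~\ref{rem:hypotheses}(1). Take $\Lambda=k[\varepsilon]/(\varepsilon^2)$, let $\C_1=\mathrm{mod}\text{-}\Lambda$ with its usual exact structure, and put $\X_1=\C_1$. Every object of $\C_1$ lies in $\X_1$, so Lemma~\ref{lem:zero-object-quotient} shows that $\C_1/[\X_1]$ is equivalent to the zero category. Strong functorial finiteness is trivial here: for each $A$, the identity gives approximation conflations $A\xrightarrow{1}A\to0$ and $0\to A\xrightarrow{1}A$. The nonsplit sequence $0\to S\to\Lambda\to S\to0$ shows $\operatorname{Ext}^1_\Lambda(S,S)\neq0$, so $\X_1$ is not rigid.

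For $(\C_0,\X_0)$ I would take $\C_0=\mathrm{mod}\text{-}k$, the category of finite-dimensional vector spaces with the split exact structure, and put $\X_0=\C_0$. The quotient is again zero. All extension groups vanish, since $\E=0$ and hence $\E^i=0$ for every $i\geq1$ by the construction of Gorsky--Nakaoka--Palu \cite{GNP}. Consequently ${}^{\perp_i}\X_0=\X_0^{\perp_i}=\C_0=\X_0$ for all $i$, and the two-sided maximal $n$-orthogonality condition holds. In this exact setting with enough projectives and injectives, together with functorial finiteness, that condition is the standard $(n+1)$ cluster tilting condition, as discussed after Definition~\ref{def:cluster-tilting}. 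Since both quotients are zero, the equivalence between them is automatically compatible with the zero suspensions.

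The main obstacle is that Theorem~\ref{thm:sharp-truncation} requires nonsplit $\C$ for cluster tilting to be meaningful. The proposition does not require nonsplitness, so the split choice is legitimate. If a nonsplit $\C_0$ were preferred, I would instead take $\C_0$ to be modules over a Nakayama or Auslander algebra admitting an $(n+1)$ cluster tilting $\X_0$. However, the quotient $\C_0/[\X_0]$ would then generally be nonzero, which would break the zero-quotient claim. I would therefore keep the split example, whose only delicate point is the trivial verification that all the definitions apply.
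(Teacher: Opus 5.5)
Your proposal is correct and matches the paper's proof: you use the same two pairs ($\mathrm{mod}\text{-}k$ and $\mathrm{mod}\text{-}k[\varepsilon]/(\varepsilon^2)$, each with $\X_i=\C_i$), the same observation that both quotients are zero, the vanishing of all positive extensions for $\X_0$, and the same nonsplit sequence $0\to S\to\Lambda\to S\to0$ witnessing non-rigidity of $\X_1$. Your explicit identity conflations for strong functorial finiteness only spell out a step the paper asserts without detail.
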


\begin{proof}
	Let $k$ be a field and take $\C_0=\mathrm{mod}\text{-}k$ with its usual exact structure and $\X_0=\C_0$. Since $\C_0$ is semisimple, all positive higher extensions vanish. Hence
	\[
	\X_0
	=\bigcap_{i=1}^{n}{}^{\perp_i}\X_0
	=\bigcap_{i=1}^{n}\X_0^{\perp_i},
	\]
	and $\X_0$ is strongly functorially finite. Thus $\X_0$ satisfies the two-sided maximal $n$-orthogonality condition and, in this exact setting, is a standard $(n+1)$ cluster tilting subcategory.
	
	Now let $\Lambda=k[\varepsilon]/(\varepsilon^2)$, put $\C_1=\mathrm{mod}\text{-}\Lambda$, and take $\X_1=\C_1$. Again $\X_1$ is strongly functorially finite. In both cases every identity morphism factors through an object of $\X_i$, so
	\[
	\C_0/[\X_0]=0=\C_1/[\X_1]
	\]
	as canonical pretriangulated categories. On the other hand, if $S=\Lambda/(\varepsilon)$, the nonsplit exact sequence
	\[
	0\longrightarrow S\longrightarrow \Lambda\longrightarrow S\longrightarrow0
	\]
	shows that $\mathbb E(S,S)=\operatorname{Ext}^1_\Lambda(S,S)\neq0$. Thus $\X_1$ is not rigid. The two quotient categories therefore carry identical quotient-internal information while the ambient cluster tilting behaviour is different.
\end{proof}

\section{Endpoint consequences and comparison}\label{sec:unification}

We conclude by placing the zero and invertible endpoints in the same
extriangulated framework. The zero endpoint is the degree one consequence of
the recognition mechanism. The stable endpoint is the established
mutation/Auslander--Reiten criterion of Zhou--Zhu
\cite[Theorem 4.3]{ZZtri}.
Accordingly, this section compares the two endpoints and summarizes their
relationship. The stable endpoint is included only for comparison and is not
claimed as a new result.

\begin{corollary}\label{cor:zero-endpoint}
Let $(\C,\E,\mathfrak s)$ be an extriangulated category, and let $\X=\add\X$ be a strongly functorially finite rigid subcategory. Then
\[
        \X\text{ is cluster tilting}
        \quad\Longleftrightarrow\quad
        \SigX\cong\zero
        \quad\Longleftrightarrow\quad
        \OmX\cong\zero.
\]
Whenever the hypotheses of an established abelian quotient theorem are satisfied, this endpoint has an abelian ideal quotient. In particular, if $\C$ has enough projective and enough injective objects, then $\C/[\X]$ is abelian by Zhou--Zhu \cite[Corollary 3.5]{ZZct}.
\end{corollary}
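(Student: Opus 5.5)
The statement to prove is Corollary~\ref{cor:zero-endpoint}, the degree one specialization of Theorem~\ref{thm:recognition}. There is one subtlety. The corollary drops the small $R$-linear hypothesis, so we cannot simply quote the positive extension calculus of Gorsky--Nakaoka--Palu. The plan is to rerun the $n=1$ case of the recognition argument using only the degree one long exact sequences, which Nakaoka--Palu \cite{NP} provide in any extriangulated category:
\[
\C(C,-)\to\C(B,-)\to\C(A,-)\to\E(C,-)\to\E(B,-),
\]
together with its covariant dual.

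\textbf{From cluster tilting to $\SigX\cong\zero$.} Assume $\X$ is cluster tilting. Definition~\ref{def:cluster-tilting} gives $\X=\{A\mid\E(A,\X)=0\}$. For any $A$, take the fixed left approximation $\E$-triangle $A\to X_A\to A^+\dashrightarrow$. Apply $\C(-,X)$ with $X\in\X$ and use the degree one exact sequence:
\begin{itemize}
\item surjectivity of $\C(X_A,X)\to\C(A,X)$, coming from the approximation property, and
\item $\E(X_A,X)=0$, coming from rigidity,
\end{itemize}
together give $\E(A^+,X)=0$. Hence $A^+\in\X$, and therefore $\SigX(\piX A)=\piX A^+\cong0$ by Lemma~\ref{lem:zero-object-quotient}. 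Lemma~\ref{lem:objectwise-zero} then gives $\SigX\cong\zero$. This is exactly Proposition~\ref{prop:left-to-sigma} at $n=1$, using only part~(1) of Lemma~\ref{lem:dimension-shifting}, which needs no higher extensions.

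\textbf{Equivalence of $\SigX\cong\zero$ and $\OmX\cong\zero$.} This is Corollary~\ref{cor:sigma-omega}. It rests only on the adjunction of Theorem~\ref{thm:known-pretri} and Lemma~\ref{lem:adjoint-zero}, with no linearity needed.

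\textbf{Back from $\SigX\cong\zero$ to cluster tilting.} Assume $\SigX\cong\zero$. We show $\X=\{A\mid\E(\X,A)=0\}$, following Theorem~\ref{thm:local-reconstruction} at $n=1$.
\begin{itemize}
\item If $\E(\X,A)=0$, then $\piX A^+\cong0$, so $A^+\in\X$.
\item The class of $A\to X_A\to A^+\dashrightarrow$ then lies in $\E(A^+,A)=0$, so the triangle splits by \cite[Corollary 3.5]{NP}.
\item Hence $A$ is a summand of $X_A$, and $A\in\X$ because $\X=\add\X$.
\end{itemize}
Dually, $\OmX\cong\zero$ together with right approximation triangles gives $\X=\{A\mid\E(A,\X)=0\}$, as in Theorem~\ref{thm:local-reconstruction-dual}. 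In both directions the reverse inclusion is rigidity. Strong functorial finiteness is assumed throughout, so $\X$ is cluster tilting.

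\textbf{Abelian quotient.} The final sentence follows by citing Theorem~\ref{thm:known-abelian}.

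The only real obstacle is checking that no step secretly uses $\E^i$ for $i\ge2$ or $R$-linearity. At $n=1$ every dimension shift has length zero, so the degree one sequences of \cite{NP} suffice, and no further argument is needed.
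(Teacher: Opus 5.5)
Your proposal is correct and follows the paper's proof essentially step for step. The forward direction uses the degree-one exact sequence together with the approximation property and rigidity to get $A^+\in\X$; $\SigX\cong\zero\Leftrightarrow\OmX\cong\zero$ comes from the adjunction via Lemma~\ref{lem:adjoint-zero}; and the converse uses the splitting argument, with its dual for $\OmX$, to recover both orthogonality equalities. Your observation that only the Nakaoka--Palu degree-one sequences are needed, so smallness and $R$-linearity can be dropped, matches the paper's own Remark~\ref{rem:scope-endpoints}(1).
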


\begin{proof}
By Lemma~\ref{lem:adjoint-zero}, we obtain
$\SigX\cong\zero$ if and only if $\OmX\cong\zero$, since
$\SigX$ is left adjoint to $\OmX$. Suppose first that $\X$ is cluster tilting. For $A\in\C$, choose a left $\X$-approximation $\E$-triangle
\[
 A\longrightarrow X_A\longrightarrow A^+\dashrightarrow.
\]
For every $X\in\X$, the standard exact sequence associated with this $\E$-triangle contains
\[
 \C(X_A,X)\longrightarrow\C(A,X)
 \longrightarrow\E(A^+,X)\longrightarrow\E(X_A,X).
\]
The first map is surjective and the last term vanishes by rigidity. Hence $\E(A^+,\X)=0$, so $A^+\in\X$ by cluster tilting. Thus $\SigX(\piX A)=\piX(A^+)=0$ for every $A$, and Lemma~\ref{lem:objectwise-zero} gives $\SigX\cong\zero$.

Conversely, assume $\SigX\cong\zero$ and let $A$ satisfy $\E(\X,A)=0$. In the same approximation $\E$-triangle, $\piX(A^+)=0$. Hence $A^+\in\X$ by Lemma~\ref{lem:zero-object-quotient}. Its extension class belongs to $\E(A^+,A)=0$, so the $\E$-triangle splits and $A$ is a direct summand of $X_A$. Therefore $A\in\X$, and rigidity gives
\[
        \X=\X^{\perp_1}.
\]
Since $\OmX\cong\zero$, the dual argument gives $\X={}^{\perp_1}\X$. Together with strong functorial finiteness, these equalities are precisely the cluster tilting condition. The final abelian assertion is the cited theorem.
\end{proof}

The two endpoints can therefore be stated in terms of the same canonical suspension.

\begin{theorem}\label{thm:unified-endpoints}
Let $(\C,\E,\mathfrak s)$ be an extriangulated category and let $\X=\add\X$ be strongly functorially finite. Equip $\QX$ with its canonical pretriangulated structure.
\begin{enumerate}[{\rm(1)}]
\item \textup{\bf (Zero endpoint)} If $\X$ is rigid, then
\[
        \SigX\cong\zero
        \quad\Longleftrightarrow\quad
        \OmX\cong\zero
        \quad\Longleftrightarrow\quad
        \X\text{ is cluster tilting}.
\]
Under the hypotheses of the standard extriangulated abelian quotient theorem, this implies that $\QX$ is abelian. For example, this is the theorem of Zhou--Zhu \cite[Corollary 3.5]{ZZct} when $\C$ has enough projective and injective objects.

\item \textup{\bf (Stable endpoint)} Assume in addition the Auslander--Reiten duality hypotheses of Zhou--Zhu \cite[Theorem 4.3]{ZZtri}. Then the following conditions are equivalent:
\begin{enumerate}[{\rm(a)}]
\item $(\C,\C)$ is an $\X$-mutation pair.
\item the canonical pretriangulated category $\QX$ is triangulated.
\item $\tau\underline{\X}=\overline{\X}$.
\item the canonical suspension $\SigX$ is an autoequivalence.
\end{enumerate}

\end{enumerate}
Consequently, within extriangulated categories, the cluster tilting and stable regimes are detected by the two extreme behaviours
\[
        \SigX\cong0
        \hspace{2.5mm}\text{and}\hspace{2.5mm}
        \SigX\text{ invertible}
\]
of one canonical endofunctor on one ideal quotient.  The comparison is summarized by
\[
\begin{array}{c@{\qquad\qquad\qquad}c}
\SigX\cong0 & \SigX\text{ invertible}\\[1mm]
\Updownarrow & \Updownarrow {\scriptstyle (\mathrm{AR})}\\[1mm]
\X\text{ cluster tilting} & (\C,\C)\text{ is an }\X\text{-mutation pair}\\[1mm]
\Downarrow\ {\scriptstyle (\mathrm{AQ})} & \Updownarrow\ {\scriptstyle (\mathrm{AR})}\\[1mm]
\QX\text{ abelian} & \QX\text{ triangulated}.
\end{array}
\]
Here \textup{(AQ)} denotes the additional hypotheses of the applicable extriangulated abelian quotient theorem, for instance the enough projectives and enough injectives hypothesis of Zhou--Zhu \cite[Corollary 3.5]{ZZct}. The symbol \textup{(AR)} denotes the Auslander--Reiten duality hypotheses of Zhou--Zhu \cite[Theorem 4.3]{ZZtri}. Thus only the upper equivalence in the left column is unconditional under the assumptions of part \textup{(1)}. The abelian conclusion is a cited consequence under \textup{(AQ)}.
\end{theorem}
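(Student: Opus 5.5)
The statement is Theorem~\ref{thm:unified-endpoints}. It is essentially a packaging of results already established or cited, so the plan is to assemble it part by part rather than prove anything new.

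For part (1), I would invoke Corollary~\ref{cor:zero-endpoint} directly. Its hypotheses are exactly those of part (1): $\X=\add\X$ is strongly functorially finite and rigid in an arbitrary extriangulated category. That corollary gives the chain $\X$ cluster tilting $\Leftrightarrow\SigX\cong\zero\Leftrightarrow\OmX\cong\zero$. Its argument does not use the smallness or $R$-linearity needed for higher extensions, because only the degree-one long exact sequence of Nakaoka--Palu enters. The abelian clause needs no proof beyond citing Theorem~\ref{thm:known-abelian}, that is Zhou--Zhu \cite[Corollary 3.5]{ZZct}. Under (AQ) the cluster tilting hypothesis plus enough projectives and injectives yields $\QX\simeq\mathrm{mod}(\X/[\mathcal P])$, which is abelian.

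For part (2), I would quote Theorem~\ref{thm:known-triangulated} verbatim. The equivalence of (a), (b), (c) is Zhou--Zhu \cite[Theorem 4.3]{ZZtri}. The implication (b)$\Rightarrow$(d) holds because the suspension of a triangulated category is an autoequivalence. Some care is needed here: the triangulated structure in question is the canonical one, so its suspension is $\SigX$. For (d)$\Rightarrow$(b), Lemma~\ref{lem:stable-right} applied to the canonical right triangulated structure of Theorem~\ref{thm:known-onesided} makes $\QX$ triangulated. The one point to check is that this triangulation is compatible with the canonical pretriangulated one, and not merely some triangulated structure. I would argue this by noting that when $\SigX$ is invertible, its right adjoint $\OmX$ is a quasi-inverse, so the left triangles are the rotated right triangles. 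This compatibility is the only step I would expect to require any care. Since the paper treats this step as part of the cited endpoint, I would simply refer to it.

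Finally, the summarizing diagram is read off from the two parts. The left column consists of the unconditional equivalence from (1) together with the (AQ) implication. The right column consists of (d)$\Leftrightarrow$(a)$\Leftrightarrow$(b) under (AR). I would add a sentence noting explicitly that only the top-left equivalence is asserted without extra hypotheses, which matches the closing remark of the statement.
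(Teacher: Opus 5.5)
Your proposal is correct and matches the paper's proof: part (1) is Corollary~\ref{cor:zero-endpoint} plus the cited abelian quotient theorem. Part (2) uses Zhou--Zhu \cite[Theorem 4.3]{ZZtri} for (a)--(c), the fact that a triangulated suspension is an autoequivalence for (b)$\Rightarrow$(d), and Lemma~\ref{lem:stable-right} for (d)$\Rightarrow$(b); your caveat that the resulting triangulation must agree with the canonical left triangles (via $\OmX$ being a quasi-inverse of $\SigX$) is a genuine subtlety that the paper likewise leaves to the Beligiannis--Reiten formalism.
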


\begin{proof}
The equivalences in part \textup{(1)} are Corollary~\ref{cor:zero-endpoint}. The abelian assertion is the cited quotient theorem under \textup{(AQ)}. For part \textup{(2)}, the equivalence of \textup{(a)}--\textup{(c)} is due to Zhou--Zhu \cite[Theorem 4.3]{ZZtri}. Condition \textup{(b)} implies \textup{(d)} because the suspension of a triangulated category is an autoequivalence, while \textup{(d)} implies \textup{(b)} by Lemma~\ref{lem:stable-right} applied to the canonical right triangulated quotient.
\end{proof}

\begin{remark}\label{rem:scope-endpoints}
\begin{enumerate}[{\rm(1)}]
	\item In degree one, the zero endpoint and stable endpoint use only the ordinary extriangulated extension bifunctor and therefore require neither smallness nor $R$-linearity.
	
	\item The left column of Theorem~\ref{thm:unified-endpoints} is specific to the canonical ideal quotient and its ambient extriangulated structure. It is not a formal statement that a pretriangulated category with zero suspension is abelian. Indeed, in the formalism of Beligiannis--Reiten \cite[Chapter II, Section 1]{BR}, an additive category with kernels and cokernels carries a pretriangulated structure with $\Sigma=\Omega=0$, without being abelian in general. The content of the zero endpoint is therefore the recognition
	\[
	\SigX\cong0
	\quad\Longrightarrow\quad
	\X\text{ cluster tilting}
	\]
	under rigidity and approximation hypotheses, followed, under \textup{(AQ)}, by the established abelian quotient theorem.  By contrast, invertibility of the suspension of a right triangulated category is already sufficient to recover a triangulated structure by Lemma~\ref{lem:stable-right}.
\end{enumerate}
\end{remark}

Both branches of Theorem~\ref{thm:unified-endpoints} are formulated in the extriangulated ambient category. The triangulated case is the following specialization.

\begin{corollary}\label{cor:triangulated-specialization}
Let $\T$ be a triangulated category with Auslander--Reiten translation $\tau$, and let $\X=\add\X$ be a functorially finite rigid subcategory. Denote by $\Sigma_{\X}$ the canonical suspension of the pretriangulated ideal quotient $\T/[\X]$.
\begin{enumerate}[{\rm(1)}]
\item
\[
 \X\text{ is cluster tilting}
 \quad\Longleftrightarrow\quad
 \Sigma_{\X}\cong0.
\]
In this case $\T/[\X]$ is abelian by Koenig--Zhu \cite[Theorem 3.3]{KZ}.
See also Zhou--Zhu \cite[Corollary 3.6]{ZZct}.
\item
\[
 \tau\X=\X
 \quad\Longleftrightarrow\quad
 \T/[\X]\text{ is triangulated}
 \quad\Longleftrightarrow\quad
 \Sigma_{\X}\text{ is an autoequivalence}.
\]
The first equivalence is J\o rgensen's theorem \cite[Theorem 3.3]{Jor}, in the formulation of Zhou--Zhu \cite[Corollary 4.4]{ZZtri}. The equivalence with invertibility of $\Sigma_{\X}$ follows from the canonical right triangulated structure and Lemma~\ref{lem:stable-right}.
\end{enumerate}
Hence the familiar triangulated diagram
\[
\begin{array}{c@{\qquad\qquad\qquad}c}
\Sigma_{\X}\cong0 & \Sigma_{\X}\text{ invertible}\\[1mm]
\Updownarrow & \Updownarrow\\[1mm]
\X\text{ cluster tilting} & \tau\X=\X\\[1mm]
\Downarrow & \Updownarrow\\[1mm]
\T/[\X]\text{ abelian} & \T/[\X]\text{ triangulated}
\end{array}
\]
is a special case of the extriangulated theorem above.
\end{corollary}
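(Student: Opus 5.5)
The statement to prove is Corollary~\ref{cor:triangulated-specialization}. Since $\T$ is triangulated, I will regard it as an extriangulated category with $\E(A,B)=\T(A,B[1])$. By Zhou--Zhu \cite[Remark 3.20]{ZZtri}, functorial finiteness of $\X$ coincides with strong functorial finiteness. Hence the canonical pretriangulated structure on $\T/[\X]$ exists, and Theorem~\ref{thm:unified-endpoints} applies.

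For part (1), the first step is to check that the extriangulated definition of cluster tilting (Definition~\ref{def:cluster-tilting}) agrees with the usual triangulated one, namely $\X=\{A\mid \T(\X,A[1])=0\}=\{A\mid\T(A,\X[1])=0\}$ together with functorial finiteness. With that identification, the equivalence $\X$ cluster tilting $\Leftrightarrow\SigX\cong0$ is Corollary~\ref{cor:zero-endpoint}. That corollary needs neither smallness nor linearity, by Remark~\ref{rem:scope-endpoints}(1), since only the degree one long exact Hom sequence is used. The abelian conclusion does not follow from Zhou--Zhu \cite[Corollary 3.5]{ZZct}, because a triangulated category usually lacks enough projectives in the extriangulated sense. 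Instead I will quote Koenig--Zhu \cite[Theorem 3.3]{KZ} directly, or equivalently its extriangulated version \cite[Corollary 3.6]{ZZct}.

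For part (2), I will use Zhou--Zhu \cite[Corollary 4.4]{ZZtri}, which is the triangulated specialization of Theorem~\ref{thm:known-triangulated} and gives $\tau\X=\X\Leftrightarrow\T/[\X]$ triangulated. It remains to compare with invertibility of $\SigX$. If $\T/[\X]$ is triangulated, its suspension is an autoequivalence. Its suspension is the canonical $\SigX$ of Theorem~\ref{thm:known-pretri}, because the triangulated structure in Zhou--Zhu's theorem is the canonical pretriangulated one. Conversely, if $\SigX$ is an autoequivalence, Lemma~\ref{lem:stable-right} applied to the canonical right triangulated structure of Theorem~\ref{thm:known-onesided} makes $\T/[\X]$ triangulated.

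I expect the main obstacle to be hypothesis bookkeeping rather than mathematics. Theorem~\ref{thm:known-triangulated} assumes Krull--Schmidt, $k$-linearity and Serre-type duality, and the corollary only names an Auslander--Reiten translation $\tau$. I will read "with Auslander--Reiten translation" as including those standing hypotheses of \cite[Corollary 4.4]{ZZtri}, so that the first equivalence is quoted from there rather than reproved. The remaining equivalence with invertibility uses none of these hypotheses. The displayed diagram is then assembled from these two parts together with the left column of Theorem~\ref{thm:unified-endpoints}.
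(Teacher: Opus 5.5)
Your proposal is correct and follows the paper's route: part~(1) is Corollary~\ref{cor:zero-endpoint} transported via $\E(A,B)=\T(A,B[1])$ and \cite[Remark 3.20]{ZZtri}, and part~(2) is \cite[Corollary 4.4]{ZZtri} together with Lemma~\ref{lem:stable-right}, exactly as the statement itself indicates. One side remark is wrong and should be corrected, although it does not affect the argument: a triangulated category regarded as extriangulated \emph{does} have enough projectives and enough injectives. Its projectives and injectives are the zero objects, and every $C$ lies in the $\E$-triangle $C[-1]\to0\to C\dashrightarrow$ realizing $\id_C\in\E(C,C[-1])=\T(C,C)$. So \cite[Corollary 3.5]{ZZct} does apply with $\mathcal P=\{0\}$, and this is precisely how \cite[Corollary 3.6]{ZZct} recovers Koenig--Zhu; Corollary 3.6 is that triangulated specialization, not a separate ``extriangulated version'' of Koenig--Zhu.
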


For $n>1$ the left endpoint admits the higher form established in Theorem~\ref{thm:recognition}:
\[
        \SigX^n\cong0
        \quad\Longleftrightarrow\quad
        \X=
        \bigcap_{i=1}^{n}{}^{\perp_i}\X
        =
        \bigcap_{i=1}^{n}\X^{\perp_i}.
\]
Thus the relevant structural axis is \emph{truncation versus stability}. Finite nilpotency detects two-sided maximal higher orthogonality, whereas invertibility detects the mutation/AR stable regime. For a general subcategory $\X$, the canonical suspension need be neither nilpotent nor invertible.

\begin{remark}
If an endofunctor $F$ of an additive category satisfies $F^p\cong\operatorname{Id}$ for some $p\geq1$, then $F^{p-1}$ is an inverse equivalence. If, in addition, $F^m\cong\zero$ for some $m\geq1$, composing with an inverse of $F^m$ gives $\operatorname{Id}\cong\zero$, so the category is zero. Applied to $F=\SigX$, this shows that a nonzero canonical quotient cannot be both truncated and periodic.
\end{remark}

\medskip
\noindent\textbf{Acknowledgements:} Panyue Zhou is supported by the National Natural Science Foundation of China (Grant No.~12371034).
\vspace{2mm}

\noindent{\bf Statement on AI:} ChatGPT was used only for language editing and improving the clarity of the manuscript. All mathematical results, arguments, and proofs were developed and verified by the authors.
\vspace{2mm}

\noindent\textbf{Data Availability:} Data sharing not applicable to this article as no datasets were generated or analysed during
the current study.
\vspace{2mm}

\noindent\textbf{Conflict of Interests:} The authors declare that they have no conflicts of interest to this work.

\end{document}